\theoremstyle{plain}
\newtheorem{theorem}{Theorem}[section]
\newtheorem{lemma}[theorem]{Lemma}
\newtheorem{proposition}[theorem]{Proposition}
\newtheorem{example}[theorem]{Example}
\theoremstyle{definition}
\newtheorem{definition}[theorem]{Definition}
\theoremstyle{remark}
\newtheorem{remark}[theorem]{Remark}
\newcommand{\Aa}{\mathcal{A}}
\newcommand{\GG}{\mathbf{G}}
\newcommand{\OO}{\mathcal{O}}
\newcommand{\LL}{\mathcal L}
\newcommand{\mm}{\mathfrak m}
\newcommand{\nn}{\mathfrak n}
\newcommand{\pop}{ {[p]} }		
\newcommand{\Het}{H_{\acute{e}t}} 	
\newcommand{\qcoords}{\mathcal P}	
\newcommand{\psup}[1]{\widetilde{ #1'}}
\newcommand{\aclasscomplex}[1]{At_{#1}}
\newcommand{\obstr}[1]{\mathfrak{o}_{#1, \OO_h}}
\newcommand{\oflat}{\mathfrak{o}^\flat_{Y}}
\newcommand{\series}[1]{\left[  \left[ #1 \right]\right]}
\newcommand{\lseries}[1]{\left( \!\left( #1\right)\!\right)}
\newcommand{\tensor}{\otimes}
\DeclareMathOperator{\Res}{Res}
\DeclareMathOperator{\Spec}{Spec}
\title{Quantization of restricted Lagrangian subvarieties in positive characteristic}
\author{Joshua Mundinger}
\date{October 15, 2022}
\begin{document}
\maketitle

\abstract{
	Bezrukavnikov and Kaledin introduced quantizations of symplectic varieties $X$ in positive characteristic which endow the Poisson bracket on $X$ with the structure of a restricted Lie algebra.
	We consider deformation quantization of line bundles on Lagrangian subvarieties $Y$ of $X$ to modules over such quantizations.
	If the ideal sheaf of $Y$ is a restricted Lie subalgebra of the structure sheaf of $X$,
	we show that there is a certain cohomology class which vanishes if and only if a line bundle on $Y$ admits a quantization.
}

\section{Introduction}

Fix a field $k$ of characteristic $p>2$, 
and let $(X,\omega)$ be a smooth variety over $k$ equipped with a symplectic form.
Unlike in characteristic zero, the Poisson bracket $\{-,-\}$ on $\OO_X$ has a large center: it follows from the Leibniz rule that $\{f^p,g\} = 0$ for all sections $f,g$ of $\OO_X$.
Bezrukavnikov and Kaledin studied certain quantizations $\OO_h$ of the Poisson sheaf $\OO_X$, known as \emph{Frobenius-constant quantizations}, where the quantization also has a large center \cite{bk}.
More precisely, the relative Frobenius map $\OO_{X'} \to \OO_X$ lifts to an inclusion 
$s: \OO_{X'} \to \OO_h$ into the center of $\OO_h$,
inducing an isomorphism $Z(\OO_h) \cong \OO_{X'}\series{h}$,
where $'$ indicates Frobenius twist, here and throughout the paper.
In fact, this map is a lift of the $p$-th power map $\OO_X' \to \OO_h/h^{p-1}$.
The lift $s: \OO_{X'} \to \OO_h$ makes the Poisson bracket on $\OO_X$ into a restricted Lie algebra
via the $p$-operation
\begin{equation} \label{p-operation-definition}
	 f^\pop = \frac{f^p - s(f\tensor 1)}{h^{p-1}}.
\end{equation}
Frobenius-constant quantizations have been used to construct derived equivalences associated to symplectic resolutions \cite{bkresolutions, kaledin08}.

We consider deformation quantization of modules over $\OO_X$ to modules over such $\OO_h$. 
In characteristic zero, Gabber's celebrated integrability of characteristics theorem implies that a coherent sheaf admitting a quantization is supported on a coisotropic subvariety \cite{gabber}.
We specifically consider quantizing modules of the form $i_*\LL$ where $i: Y \to X$ is the inclusion of a smooth Lagrangian subvariety and $\LL$ is a line bundle on $Y$.
Since $\OO_h$ has a large center, a quantization $\LL_h$ has associated to it its \emph{$p$-support} $\psup{Y}$, its support in $Z(\OO_h) \cong \OO_{X'}\series{h}$.
When we investigate the existence of a quantization of a line bundle, the $p$-support will be given data, and we will ask for conditions for a quantization with the given $p$-support to exist.

The basic example of such a quantization is over differential operators.
If $X = T^\ast Y$, $i: Y \to X$ is the zero section, and $\OO_h = D_{Y,h}$ is the $h$-crystalline differential operators on $Y$,
then for every formal series of closed $1$-forms $\alpha \in \Omega^1_Y\series{h}$,
there is a quantization of $\OO_Y$ given by the integrable $h$-connection $\nabla = hd  +h\alpha$ on $\LL_h = \OO_Y\series{h}$.
The action of the center $\OO_{T^\ast Y}\series{h}$ sends $\partial \in T_Y$
to the $p$-curvature 
\begin{align*}
	\nabla_\partial^p - h^{p-1}\nabla_{\partial^\pop} &=
	(h\partial^\pop + h \alpha(\partial))^p - h^p(\partial^\pop + \alpha(\partial^\pop)) \\
	&= h^p (\alpha(\partial)^p + \partial^{p-1}\alpha(\partial) - \alpha(\partial^\pop)), 
\end{align*}
which is divisible by $h^p$. The $p$-support $\psup{Y}$ is the graph of this $p$-curvature in $T^\ast Y'\series{h}$, which in this case is a deformation of the zero section $Y'\subseteq T^\ast Y'$ which is trivial modulo $h^p$.

The zero section of the cotangent bundle enjoys a certain compatibility with the restricted structure.

\begin{definition} \label{definition: restricted-subvariety}
	A coisotropic subvariety $Y \subseteq X$ is called \emph{restricted} if its ideal sheaf is closed under the 
	$p$-operation $f \mapsto f^\pop$.
\end{definition}

This paper in large part explores the geometry of smooth restricted Lagrangian subvarieties. 
We will show that smooth restricted Lagrangian subvarieties are, in an appropriate sense, locally isomorphic to the zero section of the cotangent bundle (see Theorem \ref{theorem: tubular-neighborhood} for the precise statement).

The methods of the Gelfand-Kazhdan formal geometry apply to analyze the existence of quantizations. 
We construct a certain class $\obstr{\psup{Y}} \in H_{fl}^2(\psup{Y}, \mathbb G_m)$ below, depending on $Y$, $\OO_h$, and the $p$-support $\psup{Y}$.

\begin{theorem}\label{theorem: existence-of-quantization}
	Let $Y \subseteq X$ be a smooth restricted Lagrangian subvariety, $\psup{Y} \subseteq X'\series{h}$ be a deformation of $Y' \subseteq X'$
	which is trivial modulo $h^p$, and $\OO_h$ a Frobenius-constant quantization of $X$.
	Then there exists a line bundle $\LL$ on $Y$ and a quantization $\LL_h$ of $\LL$ over $\OO_h$ with p-support $\psup{Y}$
	if and only if 
	\[ \obstr{\psup{Y}} = 1.\]
\end{theorem}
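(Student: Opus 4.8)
The plan is to run the Gelfand--Kazhdan formal-geometry method: construct quantizations flat-locally on $Y$, organize the quantizations with $p$-support $\psup{Y}$ into a gerbe on the flat site of $\psup{Y}$, identify the band of this gerbe with $\mathbb{G}_m$, and define $\obstr{\psup{Y}}$ to be the class of the gerbe; the theorem then becomes the statement that a gerbe admits a global object precisely when its class is trivial.

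\textbf{Local existence.} The first and main task is to show that, after passing to a flat cover of $Y$, a quantization with $p$-support $\psup{Y}$ exists. By Theorem~\ref{theorem: tubular-neighborhood} a smooth restricted Lagrangian is locally modeled on the zero section of a cotangent bundle, and in this model $\OO_h$ becomes $D_{Y,h}$ up to an inessential twist. As recalled in the introduction, a closed $1$-form $\alpha \in \Omega^1_Y\series{h}$ gives a quantization of $\OO_Y$ by the $h$-connection $hd + h\alpha$, whose $p$-support is the graph of its $p$-curvature $h^p\big(\alpha(\partial)^p + \partial^{p-1}\alpha(\partial) - \alpha(\partial^\pop)\big)$. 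Since $\psup{Y}$ is Lagrangian and trivial modulo $h^p$ it is the graph of a closed section $\psi$ of $h^p\Omega^1_{Y'}\series{h}$, and one must find a closed $\alpha$ whose $p$-curvature is $\psi$. Solving this equation inductively in powers of $h$, the initial step is an Artin--Schreier--Cartier type equation whose underlying map is a nonzero additive polynomial in the Frobenius of $\alpha$; such an equation is surjective in the fppf topology but not the Zariski topology, which is exactly why the construction is forced onto the flat site, and the later steps are linear and handled similarly after possibly further flat localization. This is precisely where the hypotheses ``$Y$ restricted'' (through Theorem~\ref{theorem: tubular-neighborhood}) and ``$\psup{Y}$ trivial modulo $h^p$'' are used.

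\textbf{The gerbe and its band.} Let $\mathcal{G}$ be the stack on the flat site of $\psup{Y}$ (which has the same underlying space as $Y$) whose objects over $U$ are pairs of a line bundle on $Y_U$ together with a quantization of its pushforward over $\OO_h|_U$ with $p$-support $\psup{Y}_U$, morphisms being $\OO_h$-linear isomorphisms. Local existence makes $\mathcal{G}$ locally nonempty, and any two local objects become isomorphic after shrinking: reduce modulo $h$ to get locally isomorphic line bundles on $Y$, then lift the isomorphism one power of $h$ at a time, the obstructions lying in coherent cohomology groups that vanish locally; hence $\mathcal{G}$ is a gerbe. To compute its band I consider $\OO_h$-linear automorphisms of a local object $\LL_h$: any such automorphism commutes with the center $Z(\OO_h)$, hence is $\OO_{\psup{Y}}$-linear, and in the cotangent model it is multiplication by a unit $u$ with $h\partial_i(u)=0$ for all $i$, that is $du=0$, that is $u$ is a unit of $\OO_{Y'}\series{h}$. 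Since $\psup{Y}$ maps isomorphically onto $Y'$, being the graph of a section, $\OO_{\psup{Y}}\cong \OO_{Y'}\series{h}$, and this identifies the automorphism sheaf of $\LL_h$ canonically, compatibly across overlaps, with $\mathbb{G}_m$ on $\psup{Y}$.

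\textbf{Conclusion and main difficulty.} Thus $\mathcal{G}$ is a gerbe banded by $\mathbb{G}_m$ on $\psup{Y}$, and I set $\obstr{\psup{Y}} := [\mathcal{G}] \in H_{fl}^2(\psup{Y}, \mathbb{G}_m)$. A line bundle $\LL$ on $Y$ together with a quantization $\LL_h$ of $i_*\LL$ with $p$-support $\psup{Y}$ is exactly a global object of $\mathcal{G}$, and a gerbe admits a global object if and only if its class is trivial, which yields the theorem. I expect the technical heart to be the local existence step together with the precise identification of the band: one must verify both that every deformation of $Y'$ trivial modulo $h^p$ genuinely arises flat-locally as a $p$-support, and that the automorphism sheaf is all of $\mathbb{G}_m$ rather than a proper subsheaf, so that the obstruction really lives in $H_{fl}^2(\psup{Y}, \mathbb{G}_m)$ and is the only obstruction.
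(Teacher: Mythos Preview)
Your overall architecture—organize quantizations into a $\mathbb{G}_m$-gerbe on $\psup{Y}$ and take its class—is equivalent to the paper's, which packages the same data as a torsor-lifting problem: the tubular neighborhood theorem yields a $\GG_J$-torsor $P_J$ on $\psup{Y}$ (local isomorphisms of the pair $(\OO_h|_{\psup{Y}},I)$ with $(A_h\otimes\OO_{\psup{Y}},J\otimes\OO_{\psup{Y}})$), Skolem--Noether gives a central extension $1 \to \mathbb{G}_m \to Aut(A_h, M_h) \to \GG_J \to 1$, and $\obstr{\psup{Y}}$ is \emph{defined} as the obstruction to lifting $P_J$ along it. Your gerbe is the gerbe of liftings associated to this problem, so the two classes agree.

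However, your local-existence step contains a genuine error. You assert that $\psup{Y}$ is Lagrangian and hence the graph of a closed section of $h^p\Omega^1_{Y'}\series{h}$, but the theorem makes no such hypothesis and it is false in general—the paper exhibits a quantization whose $p$-support is not even coisotropic. Consequently your Artin--Schreier--Cartier argument for a closed $\alpha$ with prescribed $p$-curvature does not apply to an arbitrary $\psup{Y}$ satisfying only the stated hypotheses. Your later claim that $\psup{Y}$ ``maps isomorphically onto $Y'$, being the graph of a section'' rests on the same false premise.

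The paper avoids this entirely by a change of viewpoint you almost made but then abandoned. Theorem~\ref{theorem: tubular-neighborhood} trivializes fpqc-locally \emph{over $\psup{Y}$ itself}, producing $\OO_h|_{\psup{Y}} \cong A_h \otimes_{k\series{h}} \OO_{\psup{Y}}$ with $I$ going to $J$; here $A_h$ is the reduced Weyl algebra with center $k\series{h}$, not the full $D_{Y,h}$. In this model the center of $\OO_h|_{\psup{Y}}$ is already $\OO_{\psup{Y}}$, so the $p$-support condition is automatic for any module, and local existence is witnessed by the explicit $M_h = A_h/A_h(y_1,\ldots,y_n)$ with no equation to solve. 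Lemma~\ref{lemma: unique-local-quantization} simultaneously gives local uniqueness and computes $Aut_{A_h}(M_h) = k\series{h}^\times$, which is exactly your band. In short: restrict to $\psup{Y}$ \emph{before} trivializing, so that the deformation data is absorbed into the base rather than into a connection you must construct by hand.
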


In case the $p$-support deforms trivially, so that $\psup{Y} = Y'\series{h}$, we compare this class to a certain Brauer class $[\OO_h^\sharp] \in H_{fl}^2(X'\series{h}, \mathbb G_m)$ introduced by Bogdanova and Vologodsky \cite{bv20}.
We view the assignment $\OO_h \mapsto [\OO_h^\sharp]$ as a positive-characteristic analog of the noncommutative period map.
We will review the construction of $[\OO_h^\sharp]$ below in §\ref{ss: bv}. 

\begin{theorem} \label{theorem: comparison-to-bv}
	Let $Y \subseteq X$ be a smooth restricted Lagrangian subvariety and $\psup{Y} = Y'\series{h}$.
	For all Frobenius-constant quantizations $\OO_h$ of $X$,
	\[ \obstr{Y'\series{h}} = [\OO_h^\sharp]_{Y'} \in H_{fl}^2(Y', L^+\mathbb G_m).\]
\end{theorem}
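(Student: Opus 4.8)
The plan is to realize both sides as obstructions, in the sense of Gelfand--Kazhdan formal geometry, to lifting one and the same torsor of formal coordinate systems through a central extension of group schemes, and then to identify those extensions. Recall from \S\ref{ss: bv} that $[\OO_h^\sharp]$ is the obstruction to lifting the torsor $\mathcal T_X$ of formal coordinate systems on $X'$ compatible with $\OO_h$ through a central extension $1 \to L^+\mathbb G_m \to \widetilde G_X \to G_X \to 1$ intrinsic to the formal model of $(X,\OO_h)$ near $Y$, which by the tubular neighborhood theorem is $(\widehat{T^\ast\mathbb A^n},\widehat D_h)$ with $\widehat D_h$ the completed $h$-differential operators on $\mathbb A^n$; here $G_X$ is the automorphism group scheme of that model. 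Let $G \subseteq G_X$ be the subgroup preserving the zero section together with its restricted structure. The class $\obstr{\psup Y}$ is constructed below as the obstruction to lifting the $G$-torsor $\mathcal T$ of formal restricted coordinate systems on $Y$ — which, by virtue of the restricted structure, lives over $Y'$ — through a central extension $1 \to L^+\mathbb G_m \to \widetilde G \to G \to 1$ arising from the action of $G$ on the quantizations of $\OO_{\mathbb A^n}\series h$ with trivial $p$-support. Since such obstruction classes are functorial in the data, it suffices to show (i) that $\mathcal T$ is the restriction of $\mathcal T_X$ along $Y' \hookrightarrow X'$ and $G \hookrightarrow G_X$, and (ii) that $\widetilde G = \widetilde G_X \times_{G_X} G$ as central extensions of $G$.

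For (i): by the tubular neighborhood theorem (Theorem \ref{theorem: tubular-neighborhood}), a formal restricted coordinate system on $Y$ canonically determines a formal coordinate system on $X$ near $Y$ in which $\OO_h$ is identified with $\widehat D_h$ and $Y$ with the zero section, and this is inverse to the operation of restricting a formal coordinate system on $X'$ compatible with $\OO_h$ to $Y'$. The hypothesis $\psup Y = Y'\series h$ is precisely what makes this correspondence exhaust $\mathcal T_X|_{Y'}$: a deformed $p$-support would force the zero section of the formal model to deform as well, so that not every formal coordinate system on $X$ near $Y$ would restrict to a restricted coordinate system on $Y$.

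Step (ii) is the one I expect to be the main obstacle, and it reduces to a computation inside the formal model. By the $p$-curvature formula of the introduction, the quantizations of $\OO_{\mathbb A^n}\series h$ with trivial $p$-support are exactly the integrable $h$-connections of vanishing $p$-curvature, and $\widetilde G$ records how $G$ permutes them; one must check that the resulting $L^+\mathbb G_m$-valued $2$-cocycle is the restriction to $G$ of the one defining $[\OO_h^\sharp]$, equivalently that the module $\OO_{\mathbb A^n}\series h$ realizes the $\sharp$-construction of Bogdanova--Vologodsky \cite{bv20} along the zero section. Granting this, (i) and (ii) give
\[ \obstr{Y'\series h} = [\OO_h^\sharp]_{Y'} \]
in $H_{fl}^2(Y'\series h, \mathbb G_m) = H^2(Y', L^+\mathbb G_m)$, the last identification of cohomology groups being the $h$-adic continuity of flat cohomology of the formal scheme $Y'\series h$. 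One could instead compare explicit \v{C}ech $2$-cocycles on a cover trivializing a formal neighborhood of $Y$, but the formal-geometry formulation makes the needed functoriality transparent.
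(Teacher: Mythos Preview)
Your step~(ii) is where the argument breaks down. You claim that the central extension $\widetilde G$ defining $\obstr{Y'\series{h}}$ (built from the projective action of $G_J$ on the local model $M_h$ of a quantization of $\OO_Y$) is the pullback along $G_J \hookrightarrow G$ of the extension $\widetilde G_X$ defining $[\OO_h^\sharp]$. But that is not true as an identity of extensions: the Bogdanova--Vologodsky class is not built from $M_h$ alone, but from a lattice $\Lambda$ inside $Hom(M_h^\flat, M_h)(h^{-1})$, where $M_h^\flat$ is a splitting module for the reduced \emph{differential-operator} algebra $A_h^\flat$. By Proposition~\ref{prop: compare-projective-extensions} the two extensions of $G_J$ differ exactly by the extension $[M_h^\flat]$, so your assertion ``$\OO_{\mathbb A^n}\series h$ realizes the $\sharp$-construction'' is off by this factor.

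What the paper actually does is make this discrepancy explicit and then kill it. One writes $\obstr{Y'\series h} = \mathfrak o^\sharp \cdot \oflat$, identifies $\mathfrak o^\sharp$ with $[\OO_h^\sharp]|_{Y'}$ using the uniqueness of $L^+\mathbb G_m$-reductions inside $L\mathbb G_m$-extensions (Lemma~\ref{lemma: unique-loop-reductions}), and then shows $\oflat = 1$ by exhibiting a global $D_{X,[\eta],h}$-module over $Y'$ locally isomorphic to $M_h^\flat$. The last step is where the restricted hypothesis on $Y$ enters a second time: since $[\eta]|_Y = 0$, one has $D_{X,[\eta],h}|_{Y'} = D_{X,0,h}|_{Y'}$, and the module $D_{X,0,h}|_{Y'}/D_{X,0,h}|_{Y'}(I_Y + T_Y)$ does the job. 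Your outline uses the restricted condition only once (for the tubular neighborhood in step~(i)) and therefore cannot account for this second, essential, use.
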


Theorem \ref{theorem: existence-of-quantization} is analogous to the main theorem of \cite{bgkp}, where in characteristic zero a quantization of $Y$ exists if and only if the Deligne-Fedosov class associated to the quantization, also known as the noncommutative period, vanishes on $Y$.
Indeed, our approach is similar to and inspired by \cite{bgkp}. One of the new features of our approach, following \cite{bk}, is that torsors over certain nonreduced group schemes replace the Harish-Chandra torsors used in characteristic zero.

Theorem \ref{theorem: existence-of-quantization} describes when some line bundle on $Y$ may be quantized.
A necessary condition for a particular line bundle $\LL$ to be quantized 
involves a certain positive-characteristic refinement $c_r$ of the Chern class, which takes values in $\Het^1(\Omega^1_{log})$.
It also depends on a certain class $\rho(\OO_h)$ which classifies the first-order quantization $\OO_h/h^2\OO_h$ (see Proposition \ref{prop: first-order-torsor}).
We will show in Theorem \ref{theorem: chern-class-of-quantization}
that if $\LL$ admits a quantization, then 
\[ 
	c_r(\LL) = \rho(\OO_h)|_Y + \frac{1}{2} c_r(K_Y) + [i_\theta \omega'],
\]
where $[i_\theta \omega']$ is a certain class describing the first nontrivial order of deformation of the $p$-support. 
We will also show that this condition is sufficient for $\LL$ to admit a quantization if $Pic(\psup{Y}) \to Pic(Y')$ is onto (for instance, if $\psup{Y} = Y'\series{h})$).

\paragraph{Acknowledgments.}{
The author thanks Victor Ginzburg for invaluable advice and conversations. Ekaterina Bogdanova, Roman Travkin, and Vadim Vologodsky made insightful comments on earlier versions of this work.  
The author thanks the anonymous referee for their comments and for their improvement of the statement of Theorem \ref{theorem: comparison-to-bv}. The author was supported by the NSF Graduate Research Fellowship DGE 1746045.
}

\section{Geometry of restricted Lagrangian subvarieties}

From this point forward, all Lagrangian subvarieties considered will be smooth.

\subsection{Preliminaries on restricted structures}

We begin by recalling Bezrukavnikov and Kaledin's definition of a restricted structure on a symplectic variety $(X,\omega)$. 
We will also need the notion of a restricted structure on a quantization. 
The notion of quantized algebra, defined below, provides a common framework for discussing restricted structures in these contexts.
\begin{definition}\cite[Definition 1.5]{bk}
	A \emph{quantized algebra} $A$ is an associative $k\series{h}$-algebra equipped with a $k\series{h}$-linear Lie bracket $\{-,-\}$ which is a derivation in each variable and satisfies $h\{x,y\} = xy-yx$ for all $x,y \in A$.
\end{definition}
A quantized algebra with $h=0$ is a Poisson algebra over $k$, while a quantized algebra which is flat over $k\series{h}$ is an associative $k\series{h}$-algebra.

There is a certain universal quantized polynomial $P$ which measures the failure of the Frobenius to be multiplicative in a quantized algebra. It satisfies 
\[ h^{p-1}P(x,y) = (xy)^p - x^py^p \]
for any $x$ and $y$ in a quantized algebra \cite[(1.3)]{bk}.
\begin{definition}
	A \emph{restricted structure} on a quantized algebra $A$ is an operation $x \mapsto x^\pop$ on $A$ satisfying:
	\begin{itemize}
		\item $\langle A, \{-,-\}, -^\pop \rangle$ is a restricted Lie algebra over $k$;
		\item $h^\pop = h$;
		\item $(xy)^\pop = x^py^\pop + x^\pop y^p - h^{p-1} x^\pop y^\pop + P(x,y)$ for all $x,y \in A$.
	\end{itemize}
\end{definition}

The first step towards the construction of a Frobenius-constant quantization 
is the construction of a restricted structure on the Poisson sheaf $\OO_X$,
which is called a restricted structure on $X$. 
\begin{definition}
	Let $Z/k$ be a smooth variety. For a vector field $\partial$,
	the \emph{restricted contraction} by $\partial$ is the operation 
	$i^\pop_\partial:\Omega^{\ast + 1}_Z \to \Omega^{\ast}_Z $ defined by 
	\[
		 i^\pop_\partial: \alpha \mapsto i_{\partial^\pop}\alpha - L_{\partial}^{p-1} i_\partial \alpha,
	\]
	where $L_\partial$ is the Lie derivative with respect to $\partial$.
\end{definition}

Let $\Omega^{\leq 1}$ be the de Rham complex truncated below degree $1$.
\begin{theorem}\cite[Theorem 1.12]{bk} \label{theorem: restricted-structure-from-form}
	A restricted structure on a symplectic variety $(X,\omega)$ 
	is equivalent to a choice of $[\eta] \in H^1(\Omega_X^{\leq 1})$ such that $d[\eta] = \omega$.
	Given $[\eta]$, the restricted operation sends $f \in \OO_X$ with Hamiltonian vector field $H_f$ to
	\[
		f^\pop = i_{H_f}^\pop \eta, 
	\]
	where $\eta$ is a 1-form locally representing $[\eta]$.
\end{theorem}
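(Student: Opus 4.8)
The plan is to exhibit a bijection whose forward map sends a class $[\eta] \in H^1(\Omega_X^{\leq 1})$ with $d[\eta] = \omega$ to the operation $f \mapsto i^\pop_{H_f}\eta$, where $\eta$ is any local $1$-form representing the class. A class with $d[\eta]=\omega$ is precisely a compatible system of local symplectic potentials glued up to exact $1$-forms, so after fixing such a potential $\theta$ on an open $U$ the statement splits into: (i) the operation $\Phi_\theta(f) := i^\pop_{H_f}\theta$ is a restricted structure on $\OO_U$; (ii) $\Phi_\theta$ depends only on $\theta$ modulo exact $1$-forms; (iii) every restricted structure on $\OO_U$ is of the form $\Phi_\theta$; and (iv) the local pieces glue to give a global bijection.

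Step (ii) is immediate: $i^\pop_{H_f}(dg) = i_{(H_f)^\pop}dg - L_{H_f}^{p-1}i_{H_f}dg = (H_f)^\pop(g) - L_{H_f}^{p-1}(H_f g) = L_{H_f}^p g - L_{H_f}^p g = 0$, since $L_{H_f}$ acts on $\OO_X$ as $\{f,-\}$ and $(H_f)^\pop$ as $L_{H_f}^p$. For (i) the key identity is $H_{\Phi_\theta(f)} = (H_f)^\pop$ (up to the sign conventions for $H$ and $i^\pop$): starting from $i_{H_f}\omega = df$ one gets $L_{H_f}\theta = d(i_{H_f}\theta + f)$, and iterating, using $L_{H_f}f = \{f,f\} = 0$, gives $L_{H_f}^p\theta = d\bigl(L_{H_f}^{p-1}i_{H_f}\theta\bigr)$; comparing with $i_{(H_f)^\pop}\omega = L_{H_f}^p\theta - d\,i_{(H_f)^\pop}\theta$ yields $i_{(H_f)^\pop}\omega = -\,d\,\Phi_\theta(f)$. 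Granting this, the restricted Lie bracket axiom $\{\Phi_\theta(f),g\} = (\operatorname{ad} f)^p g$ is the tautology $L_{(H_f)^\pop} = L_{H_f}^p$ on $\OO_X$, and $k$-semilinearity follows from $H_{cf}=cH_f$ and the scaling of $i^\pop$. The additivity (Jacobson) axiom and the multiplicative axiom are then cleanest to obtain by comparison with an associative model: for $X = T^\ast Y$ with canonical potential $\theta = \sum p_i\,dq_i$ one checks that $\Phi_\theta$ agrees with the restricted structure on $\OO_{T^\ast Y}$ induced from $h$-crystalline differential operators $D_{Y,h}$, where these identities hold by \cite[(1.3)]{bk} and the definition of $P$; since any symplectic variety is étale-locally of this form and all the axioms are étale-local, (i) follows in general.

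For (iii), given a restricted structure $-^\pop$ on $\OO_U$, fix a potential $\theta$ and set $\delta := (-^\pop) - \Phi_\theta$. Since the Jacobson polynomials $s_i$ and the polynomial $P$ depend only on the bracket, they cancel in the difference, so $\delta$ is additive with $\delta(fg) = f^p\delta(g) + g^p\delta(f)$; the bracket axiom forces $\{\delta(f),g\} = 0$ for all $g$, so $\delta$ takes values in the Poisson center $\OO_{U'}$ and vanishes on $p$-th powers. The crux is the \emph{restricted Cartier lemma}: $\beta \mapsto \bigl(f \mapsto i^\pop_{H_f}\beta\bigr)$ is a bijection from closed $1$-forms on $U$ modulo exact ones onto the set of such maps $\delta$. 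One shows $i^\pop_{H_f}\beta \in \OO_{U'}$ for closed $\beta$ by commuting $L_{H_a}$ past $i_{(H_f)^\pop}$ and $L_{H_f}^{p-1}i_{H_f}$ using $L_{H_a}\beta = d\,i_{H_a}\beta$ and the restricted Cartan identities; injectivity and surjectivity amount to the Cartier isomorphism, identifying closed $1$-forms modulo exact ones with a Frobenius twist of the derivations, transported through $H\colon \OO_X \xrightarrow{\sim} \{\text{Hamiltonian fields}\}$. Granting the lemma, $\delta = i^\pop_{H_\bullet}\beta$ for a closed $\beta$, so $-^\pop = \Phi_{\theta+\beta}$ with $\theta+\beta$ again a potential.

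Finally (iv): by the restricted Cartier lemma the sheaf of restricted structures on $\OO_X$ is a torsor under $\Omega^1_{X,cl}/d\OO_X$ via $(\beta,-^\pop)\mapsto -^\pop + i^\pop_{H_\bullet}\beta$, the sheaf of local symplectic potentials modulo exact forms is a torsor under the same sheaf, and $\theta \mapsto \Phi_\theta$ is a morphism of torsors which is an isomorphism on stalks (injective because $i^\pop_{H_\bullet}\beta = 0$ forces $\beta$ exact, surjective by step (iii)). Hence $\Phi$ induces a bijection on global sections; tracking the \v{C}ech bookkeeping (and the freedom in the gluing functions) identifies a global section of the potentials torsor with a class $[\eta]\in H^1(\Omega_X^{\leq 1})$ such that $d[\eta]=\omega$, which is the asserted equivalence. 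The main obstacle is the restricted Cartier lemma of step (iii): it is the one place where genuine characteristic-$p$ input — the Cartier operator, packaged through $i^\pop_\partial$ — is required, whereas step (i), though a longer computation, reduces to bookkeeping against the universal polynomial $P$ once $H_{\Phi_\theta(f)} = (H_f)^\pop$ is in hand.
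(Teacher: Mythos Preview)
The paper does not give its own proof of this theorem: it is quoted verbatim from \cite[Theorem~1.12]{bk} and is immediately followed by a remark, not a proof environment. So there is no ``paper's proof'' to compare against; your sketch is essentially an attempt to reconstruct the Bezrukavnikov--Kaledin argument.

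That said, one step in your outline does not go through as written. In step~(i) you reduce the Jacobson additivity and the $P$-multiplicativity axioms to the model $T^\ast Y$ by asserting that ``any symplectic variety is \'etale-locally of this form.'' In positive characteristic this is false in general: the Darboux-type statement available here (the one the paper quotes as \cite[Theorem~3.4]{bk}) only gives an isomorphism with the standard Frobenius neighborhood $(\Spec A_0,\eta)\times X'$ fpqc-locally on the Frobenius twist $X'$, not \'etale-locally on $X$. Your reduction can be salvaged---the identities you need are polynomial over $k$ and therefore can be checked after any faithfully flat base change, or, as Bezrukavnikov and Kaledin actually do, verified directly as universal identities in the free quantized algebra---but the \'etale-local Darboux claim should be replaced by one of these two arguments.

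Your step~(iii), the ``restricted Cartier lemma,'' is indeed the heart of the matter and is where the paper's Lemma~\ref{lemma: cartier-and-restricted-derivative} (i.e.\ \cite[Lemma~2.1]{bk}) enters: the identity $C(i^\pop_\partial\beta)=i_{\partial'}C(\beta)$ for closed $\beta$ is exactly what identifies the torsor of restricted structures with closed $1$-forms modulo exact ones via the Cartier isomorphism. Your formulation of this step is correct in spirit, though you have left the surjectivity direction (that every central, Frobenius-semilinear derivation $\delta$ arises as $i^\pop_{H_\bullet}\beta$) as a bare appeal to Cartier; in \cite{bk} this is made precise by computing in Darboux coordinates on $A_0$.
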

\begin{remark}
	Even if the symplectic form is locally exact, such a class $[\eta]$ need not exist, e\.g\. in the case of an abelian variety. 
\end{remark}
The key lemma about restricted contraction is on its relationship with the Cartier operator $C$.
\begin{lemma} \cite[Lemma 2.1]{bk} \label{lemma: cartier-and-restricted-derivative}
	Let $Z/k$ be a smooth variety and $\alpha$ a closed differential form on $Z$.
	Then for all vector fields $\partial$,
	\begin{equation*}
		C(i^\pop_\partial \alpha) = i_{\partial'} C(\alpha),
	\end{equation*}
	where $\partial'$ is the corresponding vector field on $Z'$.
\end{lemma}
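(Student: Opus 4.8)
The plan is to reduce the identity --- which is local on $Z$ --- to a comparison of two operators on the de Rham cohomology sheaves $\mathcal{H}^\bullet(\Omega_Z^\bullet)$, transport the comparison along the Cartier isomorphism, and then check it on algebra generators. I would fix étale coordinates on an open subset and argue as follows.

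First I would record that $i^\pop_\partial$ preserves closed and exact forms, hence descends to an operator on $\mathcal{H}^\bullet(\Omega_Z^\bullet)$. This uses Cartan's formula $L_\partial = d\,i_\partial + i_\partial\,d$ together with $(L_\partial)^p = L_{\partial^\pop}$ on $\Omega_Z^\bullet$ (the $p$-th power of a degree-zero derivation of $\Omega_Z^\bullet$ commuting with $d$ is again one, hence equals $L_\eta$ for the vector field $\eta = \partial^\pop$ it induces on $\OO_Z$): for closed $\alpha$ one gets $d(i^\pop_\partial\alpha) = L_{\partial^\pop}\alpha - L_\partial^p\alpha = 0$, and $i^\pop_\partial(d\gamma) = d\bigl(L_\partial^{p-1}i_\partial\gamma - i_{\partial^\pop}\gamma\bigr)$. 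Since $C$ is already defined on $\mathcal{H}^\bullet(\Omega_Z^\bullet)$, the assertion becomes an identity of operators $\mathcal{H}^\bullet(\Omega_Z^\bullet) \to \Omega^{\bullet-1}_{Z'}$. I would then transport along the Cartier isomorphism, an isomorphism of graded $\OO_{Z'}$-algebras; in particular $\Omega^\bullet_{Z'}$ is generated over $\OO_{Z'}$ in degrees $0$ and $1$. Because $\partial$ annihilates $p$-th powers, the cross terms in the Leibniz expansion of $L_\partial^{p-1}(g^p\,i_\partial\alpha)$ vanish, so $i^\pop_\partial$ is $\OO_Z^p$-linear and $C \circ i^\pop_\partial \circ C^{-1}$ is $\OO_{Z'}$-linear, as is $i_{\partial'}$. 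Granting the key claim below, that $i^\pop_\partial$ is a degree $-1$ anti-derivation of $\bigl(\mathcal{H}^\bullet(\Omega_Z^\bullet),\cup\bigr)$, both $C \circ i^\pop_\partial \circ C^{-1}$ and $i_{\partial'}$ become degree $-1$ anti-derivations of the exterior algebra $\Omega^\bullet_{Z'}$, so it suffices to compare them on $\OO_{Z'}$ (where both vanish) and on $\Omega^1_{Z'}$.

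For $\Omega^1_{Z'}$: it is locally spanned over $\OO_{Z'}$ by exact forms $dg'$, with $C^{-1}(dg') = [\,g^{p-1}dg\,]$, so by $\OO_{Z'}$-linearity it is enough to prove $i^\pop_\partial(g^{p-1}dg) = (\partial g)^p$ in $\OO_Z$; then $C$ of the left side is $(\partial g)' = \partial'(g') = i_{\partial'}(dg')$, as wanted. Both sides of $i^\pop_\partial(g^{p-1}dg) = (\partial g)^p$ are universal polynomials in $g,\partial g,\dots,\partial^p g$, so it suffices to check it when $g$ is invertible, where $g^{p-1}dg = g^p\,d\log g$ and, by $\OO_Z^p$-linearity, the identity reduces to $\partial^\pop(g)/g = (\partial\log g)^p + \partial^{p-1}(\partial\log g)$ --- exactly the vanishing of the $p$-curvature of $d + d\log g$, which is gauge equivalent to the trivial connection via multiplication by $g$.

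The step I expect to be the main obstacle is the anti-derivation claim: for closed $\alpha,\beta$,
\[ i^\pop_\partial(\alpha\wedge\beta) \equiv (i^\pop_\partial\alpha)\wedge\beta + (-1)^{|\alpha|}\,\alpha\wedge(i^\pop_\partial\beta) \pmod{d\Omega_Z^\bullet}. \]
As $i_{\partial^\pop}$ is a genuine anti-derivation of $\Omega_Z^\bullet$, this reduces to the same congruence for $L_\partial^{p-1}i_\partial$. I would expand $i_\partial(\alpha\wedge\beta)$ and then $L_\partial^{p-1}$ by the signed Leibniz rules; using that $L_\partial$ annihilates de Rham cohomology --- so $L_\partial^m\mu = d\bigl(L_\partial^{m-1}i_\partial\mu\bigr)$ is exact for $m\ge 1$ and $\mu$ closed --- I would integrate by parts modulo exact forms to rewrite the difference of the two sides, modulo $d\Omega_Z^\bullet$, as
\[ \sum_{k=0}^{p-2}\left(\binom{p-1}{k} + \binom{p-1}{k+1}\right) L_\partial^k(i_\partial\alpha)\wedge L_\partial^{p-1-k}\beta \;=\; \sum_{k=0}^{p-2}\binom{p}{k+1}\, L_\partial^k(i_\partial\alpha)\wedge L_\partial^{p-1-k}\beta, \]
which vanishes because $p\mid\binom{p}{j}$ for $1\le j\le p-1$. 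The delicate part is entirely the bookkeeping --- tracking the signs produced by the repeated integrations by parts, and the reindexing that collects the leftover coefficients into the Pascal sum $\binom{p-1}{k}+\binom{p-1}{k+1}=\binom{p}{k+1}$; the rest is formal.
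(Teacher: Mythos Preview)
The paper does not give its own proof of this lemma; it is cited from \cite[Lemma 2.1]{bk} and used as a black box. Your argument is correct and is in the same spirit as the proof in \cite{bk}: show that $i^\pop_\partial$ descends to an $\OO_{Z'}$-linear degree $-1$ anti-derivation of $\mathcal{H}^\bullet(\Omega_Z^\bullet)$, transport along the multiplicative Cartier isomorphism, and verify on algebra generators in degrees $0$ and $1$; your integration-by-parts collapsing to the Pascal sum $\binom{p-1}{k}+\binom{p-1}{k+1}=\binom{p}{k+1}\equiv 0$ is the right computation, and the degree-one check is exactly the vanishing of the $p$-curvature of $d+d\log g$.
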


\subsection{Restricted Lagrangian subvarieties}

Recall from Definition \ref{definition: restricted-subvariety} that a coisotropic subvariety is \emph{restricted} if its ideal sheaf is closed under the restricted operation.

\begin{proposition}
	For a Lagrangian subvariety $Y$ of a restricted symplectic variety $(X,[\eta])$, the following are equivalent:
	\begin{enumerate}
		\item the ideal sheaf of $Y$ is stable under the restricted operation;
		\item $[\eta]_Y = 0$ in $H^1(Y, \Omega^{\leq 1})$.
	\end{enumerate}
\end{proposition}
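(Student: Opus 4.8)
The plan is to work locally and unwind both conditions in terms of the 1-form $\eta$ representing $[\eta]$ and the ideal sheaf $I_Y$ of $Y$. Since $Y$ is Lagrangian, $\omega|_Y = 0$, so on $Y$ the form $\eta|_Y$ is closed; thus $[\eta]_Y$ makes sense as a class in $H^1(Y,\Omega_Y^{\leq 1})$, and it vanishes precisely when, locally on $Y$, one can write $\eta|_Y = df$ for some function $f$, i.e. $\eta - df$ lies in the conormal directions after restriction. For condition (1), by Theorem \ref{theorem: restricted-structure-from-form} the restricted operation is $f^\pop = i_{H_f}^\pop \eta$, and $I_Y$ is a Lie subalgebra of $\OO_X$ under $\{-,-\}$ because $Y$ is coisotropic (Lagrangian); the question is whether $I_Y$ is also closed under $f \mapsto i_{H_f}^\pop\eta$. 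The key point is that for $f \in I_Y$ the Hamiltonian vector field $H_f$ is tangent to $Y$ (again since $Y$ is Lagrangian, $H_f$ spans exactly $TY$ along $Y$ as $f$ ranges over $I_Y$), so restricted contraction $i_{H_f}^\pop\eta = i_{H_f^\pop}\eta - L_{H_f}^{p-1} i_{H_f}\eta$ can be analyzed by restricting to $Y$.

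The concrete steps I would carry out: (i) Reduce to a local statement, choosing an open $U \subseteq X$ on which $\eta$ is an honest 1-form with $d\eta = \omega$; then $[\eta]_Y = 0$ locally iff $\eta|_Y$ is exact on $Y$, and globally $[\eta]_Y$ is the obstruction to patching such local primitives — but for the equivalence (1)$\Leftrightarrow$(2) it suffices to show the \emph{sheaf-level} statement that $I_Y$ is $\pop$-stable iff $\eta|_Y$ is (Zariski-)locally exact, since $H^1(Y,\Omega_Y^{\le 1})$ is computed by exactly this data. (ii) Show (2)$\Rightarrow$(1): if $\eta|_Y = df_0$ locally with $f_0 \in \OO_Y$, lift $f_0$ to $\tilde f_0 \in \OO_X$; then $\eta - d\tilde f_0$ vanishes on $Y$, i.e. lies in $I_Y \cdot \Omega^1_X + $ conormal forms, and replacing $\eta$ by $\eta - d\tilde f_0$ (which does not change $[\eta]$ and changes $f^\pop$ only by adding a $\{\tilde f_0, -\}$-type correction preserving $I_Y$) we may assume $\eta|_Y = 0$; then for $f \in I_Y$, since $H_f$ is tangent to $Y$, both $i_{H_f}\eta$ and $i_{H_f^\pop}\eta$ lie in $I_Y$ (the first because $i_{H_f}\eta$ vanishes on $Y$ as $\eta|_Y = 0$ and $H_f \in TY$; the second because $H_f^\pop$ is again a Hamiltonian vector field of an element of $I_Y$ using that $\{-,-\}$ restricts), and $L_{H_f}^{p-1}(i_{H_f}\eta)$ stays in $I_Y$ because $H_f$ is tangent to $Y$, so $L_{H_f}$ preserves $I_Y$. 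Hence $f^\pop = i_{H_f}^\pop\eta \in I_Y$. (iii) Show (1)$\Rightarrow$(2): assume $I_Y$ is $\pop$-stable; use Lemma \ref{lemma: cartier-and-restricted-derivative} to control the Cartier operator applied to $i_{H_f}^\pop \eta$, and deduce that the class $[\eta|_Y] \in H^1(Y,\Omega_Y^{\le 1})$ is killed — roughly, $\pop$-stability of $I_Y$ says the restricted contraction of $\eta$ against every tangent Hamiltonian field is an honest function on $Y$, and combined with the Cartier-compatibility this forces $C([\eta|_Y])$ to vanish, hence $[\eta|_Y] = 0$ by injectivity of Cartier on the relevant cohomology of a smooth variety.

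The main obstacle I anticipate is the direction (1)$\Rightarrow$(2): one has pointwise/sheaf-theoretic control ($f^\pop \in I_Y$ for all $f \in I_Y$) and must upgrade it to the cohomological vanishing of $[\eta]_Y$. The clean way through is to observe that $d[\eta]_Y = \omega|_Y = 0$, so $[\eta]_Y$ is a class in $H^1$ of the \emph{closed} truncated de Rham complex, on which the Cartier operator $C$ acts; Lemma \ref{lemma: cartier-and-restricted-derivative} then translates the $\pop$-stability hypothesis into the statement that $C([\eta]_Y)$ pairs trivially with all (Frobenius-twisted) vector fields coming from $I_{Y'}$, which on a smooth Lagrangian is all of $T_{Y'}$; since the pairing $H^1(Y',\Omega^1_{Y'}) \otimes T_{Y'} \to H^1(Y',\OO_{Y'})$ together with knowing the class is closed pins down $C([\eta]_Y)$, and $C$ is an isomorphism onto the subspace of closed classes, we conclude $[\eta]_Y = 0$. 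I would double-check the bookkeeping of how changing the local primitive interacts with the non-additive, non-linear operation $f \mapsto f^\pop$ via the quantized polynomial $P$; this is where sign and divisibility errors are most likely to creep in.
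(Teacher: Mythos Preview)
Your plan is essentially correct and hinges on the same key input as the paper, namely Lemma~\ref{lemma: cartier-and-restricted-derivative}. The paper's argument is tighter: instead of treating the two implications separately, it identifies both conditions with the single local statement $i^\pop_\partial(\eta_Y)=0$ for all $\partial\in T_Y$ (using that the Hamiltonians $H_f$ for $f\in I_Y$ span $T_Y$ along $Y$), and then invokes the Cartier lemma together with the Cartier isomorphism to see that this is equivalent to $\eta_Y$ being locally exact. Two small corrections to your write-up. First, replacing $\eta$ by $\eta-d\tilde f_0$ does not change $f^\pop$ at all, since $i^\pop_\partial(dg)=\partial^\pop(g)-\partial^p(g)=0$; no ``$\{\tilde f_0,-\}$-type correction'' arises, so your normalization step in (2)$\Rightarrow$(1) is legitimate but the stated justification is off. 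Second, in (1)$\Rightarrow$(2) the appeal to ``injectivity of Cartier on the relevant cohomology'' and to a pairing $H^1(Y',\Omega^1_{Y'})\otimes T_{Y'}\to H^1(Y',\OO_{Y'})$ is misdirected: the argument is purely at the sheaf level. The hypothesis $f^\pop\in I_Y$ says $i^\pop_{H_f}(\eta_Y)=0$ as a function on $Y$; the Cartier lemma then gives $i_{\partial'}C(\eta_Y)=0$ for all $\partial'$, hence $C(\eta_Y)=0$ as a $1$-form on $Y'$, and the Cartier isomorphism $\mathcal H^1(\Omega^\bullet_Y)\cong\Omega^1_{Y'}$ (not injectivity on global hypercohomology) yields local exactness of $\eta_Y$.
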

\begin{proof}
	The question is local, so we may assume that $[\eta]$ is represented by a global 1-form $\eta$.
	Let $I$ be the ideal sheaf of $Y$,
	and let $\eta_Y$ denote the restriction of $\eta$ to $Y$.
	Since $Y$ is Lagrangian and $d\eta = \omega$, $\eta_Y$ is closed.
	We wish to show that $\eta_Y$ is locally exact if and only if $I^\pop \subseteq I$.

	Theorem \ref{theorem: restricted-structure-from-form} states that for all local sections $f$ of $\OO_X$, the restricted operation is given by
	\[ 
		f^\pop = i^\pop_{H_f}\eta.
	\]
	Hence $f^\pop \in I$ for all $f \in I$ if and only if $i^\pop_{H_f}(\eta_Y) = 0$ for all $f \in I$.
	As $Y$ is Lagrangian, the set of Hamiltonian vector fields $\{H_f \mid f \in I\}$ spans $T_Y$, so $I^\pop \subseteq I$ if and only if 
	\begin{equation*}
		 i^\pop_{\partial}(\eta_Y) = 0
	\end{equation*}
	for all local vector fields $\partial$ on $Y$. By Lemma \ref{lemma: cartier-and-restricted-derivative} and the Cartier isomorphism,
	this is equivalent to that $\eta_Y$ is locally exact.
\end{proof}

\begin{example}
	The canonical $1$-form $\lambda$ on the cotangent bundle $T^*X$ gives $T^*X$ a restricted structure.
	A section of the cotangent bundle $s_\alpha:X\to T^*X$ corresponding to a $1$-form $\alpha$
	satisfies $s_\alpha^*\lambda = \alpha$.
	Hence the graph of $s_\alpha$ is Lagrangian if and only if $\alpha$ 
	is closed, while it is restricted Lagrangian if and only if $\alpha$
	is locally exact.
\end{example}

\subsection{Local normal form}

Our goal is to find a local normal form for restricted Lagrangian subvarieties.
In the smooth category, Weinstein's tubular neighborhood theorem states that for a Lagrangian submanifold $L \subseteq M$, every point in $L$ has a neighborhood in $M$ which is symplectomorphic to a neighborhood of the zero section of the cotangent bundle of $L$ \cite[Lecture 5]{weinstein77}.
We establish in this section a kind of tubular neighborhood theorem for restricted Lagrangian subvarieties in positive characteristic. In this setting, our neighborhood will be a neighborhood in the fpqc topology, as in Bezrukavnikov and Kaledin's version of the Darboux theorem 
\cite[Theorem 3.4]{bk}. 

\begin{definition}
	A \emph{Frobenius-constant quantization} of a restricted symplectic variety $X$ is a sheaf $\OO_h$ of flat $k\series{h}$-algebras, complete with respect to the $h$-adic filtration, and a map of algebras $s: \OO_X \to \OO_h$
	which is $k$-Frobenius-linear and satisfies $s(f) \cong f^p \mod h^{p-1}$,
	such that with the $p$-operation
	\[ 
		f^\pop = \frac{f^p - s(f)}{h^{p-1}}
	\]
	on $\OO_h$,
	there is an isomorphism of restricted Poisson algebras $\OO_h/h\OO_h \cong \OO_X$.
\end{definition}
The center of a Frobenius-constant quantization is isomorphic via $s$ to $\OO_{X'}\series{h}$ \cite[Lemma 1.10]{bk}. 

The local model for a Frobenius-constant quantization of a symplectic variety of dimension $2n$ is the reduced Weyl algebra $A_h$ in $2n$ variables, defined as follows:
it is the $k\series{h}$-algebra with generators $x_1,\ldots, x_n$ and $y_1,\ldots, y_n$ and relations
\begin{equation*}
	[x_i,x_j] = [y_i,y_j] = x_i^p = y_i^p = 0, \qquad
	[y_i,x_j] = \delta_{ij}h
\end{equation*}
for all $i$ and $j$.
The restricted Weyl algebra is the unique Frobenius-constant quantization of the Frobenius neighborhood
\[ A_0= k[x_1,\ldots, x_n,y_1,\ldots, y_n]/(x_1^p,\ldots, y_n^p)\]
with restricted structure given by the 1-form 
\[\eta = \sum_i y_i dx_i.\]
Bezrukavnikov and Kaledin showed that a restricted symplectic variety is 
locally isomorphic to $(\Spec A_0,\eta)\times X'$ in the fpqc topology on $X'$,
and that every Frobenius-constant quantization is in the same sense
locally isomorphic to $A_h$ \cite[Theorem 3.4]{bk}.
That is a positive-characteristic version of Darboux's theorem.

The Frobenius neighborhood $(\Spec A_0,\eta)$ may be thought of as a subscheme of the cotangent bundle to $\Spec k[x_1,\ldots, x_n]/(x_1^p,\ldots, x_n^p)$. 
Our local model for a restricted Lagrangian subvariety is then the zero section of this cotangent bundle, defined by the following ideal:
\begin{equation}
	J = (h,y_1,\ldots, y_n) \subseteq A_h.
\end{equation}
Now we may prove our local tubular neighborhood theorem. 
\begin{theorem} \label{theorem: tubular-neighborhood}
	Let $Y \subseteq X$ be a restricted Lagrangian subvariety,
	$\OO_h$ be a Frobenius-constant quantization of $X$,
	and $I$ be the ideal of $Y$ in $\OO_h$.
	If $\psup{Y} \subseteq X'\series{h}$ is a formal deformation of $Y'\subseteq X'$ which is trivial modulo $h^p$,
	then fpqc locally on $\psup{Y}$, there are isomorphisms of restricted quantized algebras $\OO_h|_{\psup{Y}} \cong A_h \tensor_{k\series{h}} \OO_{\psup{Y}}$ taking $I|_{\psup{Y}}$ to $J\tensor \OO_{\psup{Y}}$. 
\end{theorem}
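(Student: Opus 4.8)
The proof will be organized around a torsor, in the spirit of Bezrukavnikov and Kaledin's proof of their Darboux theorem \cite[Theorem 3.4]{bk}: over $\psup{Y}$ one considers the sheaf of fpqc-local isomorphisms of pairs $(\OO_h, I)\to (A_h\tensor\OO, J\tensor\OO)$ compatible with the restricted quantized structure, which is a pseudo-torsor under the group scheme $\mathrm{Aut}(A_h, J)$ of restricted quantized automorphisms of $A_h$ preserving $J$. The theorem is then the assertion that this pseudo-torsor has fpqc-local sections, and the proof amounts to constructing them. I would build such a local section in two stages: first reduce to a normal form modulo $h$, then lift over $k\series{h}$.

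For the first stage, apply \cite[Theorem 3.4]{bk} to trivialize $\OO_h$ itself: pulling back a trivializing fpqc cover of $X'$ along $\psup{Y}\to X'$, we may assume $\OO_h|_{\psup Y} = A_h\tensor\OO_{\psup Y}$, with the restricted symplectic structure on $X \bmod h$ given by $\eta = \sum_i y_i\,dx_i$. Now $I \bmod h$ cuts out a smooth restricted Lagrangian in this model, and one wants a symplectomorphism carrying the zero section to it and preserving the class $[\eta]$. This is a positive-characteristic, restricted analogue of Weinstein's tubular neighborhood theorem. Using smoothness and the Lagrangian condition, $Y$ is fpqc-locally a graph, i.e. $I \bmod h$ is generated by expressions $y_i - f_i$ with $\sum f_i\,dx_i$ closed; the restricted hypothesis — equivalently $[\eta]_Y = 0$ in $H^1(Y,\Omega^{\leq 1})$, as established above — makes $\sum f_i\,dx_i$ locally exact, and then the fiber translation $(x_i,y_i)\mapsto(x_i, y_i + f_i)$ is a symplectomorphism taking the zero section to $Y$ that changes $\eta$ only by an exact form. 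In characteristic zero the corresponding step is a Moser-type integration of a Hamiltonian vector field; here it is replaced by this explicit exact translation, which is precisely why one needs the restricted, not merely the Lagrangian, hypothesis. One must also ensure this is carried out $\OO_{X'}$-linearly, so that it descends to an isomorphism over $\psup{Y}$.

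For the second stage, having arranged $I \equiv J \bmod h$, improve the identification order by order in $h$. The ideals $I \subseteq A_h\tensor\OO_{\psup Y}$ that are restricted, reduce to the zero-section ideal modulo $h$, and have $p$-support $\psup{Y}$ form a pseudo-torsor under a pro-unipotent group scheme obtained from $\mathrm{Aut}(A_h, J)$; this group is an inverse limit of successive central extensions by vector groups attached to coherent sheaves on $\psup{Y}$ (built from the conormal bundle of $Y$ and the symplectic form), so its fpqc cohomology is computed by the coherent cohomology of $\psup{Y}$. Hence at each order the obstruction to conjugating $I$ into $J\tensor\OO_{\psup Y}$ lies in an $H^1$ of a coherent sheaf, which vanishes on an affine, hence fpqc, cover; refining the cover kills all obstructions and trivializes the pseudo-torsor. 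The hypothesis that $\psup{Y}$ is trivial modulo $h^p$ enters precisely in matching $p$-supports at this stage: it is the condition under which $\psup{Y}$ can be realized as the $p$-support of a deformation of the zero-section ideal, compatibly with the base change of the local model's ideal $J$ to $\psup{Y}$, exactly as in the differential-operator computation of the introduction.

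I expect the second stage to be the main obstacle: one must identify the relevant pro-(group scheme) and its filtration carefully enough to recognize the obstruction groups as coherent cohomology, while simultaneously tracking the restricted structure on $\OO_h$, the ideal $I$, and the $p$-support constraint all at once. A secondary subtlety lies in the "graph form" reduction modulo $h$, which in the fpqc setting requires care because $\Spec A_0$ already carries an infinitesimal thickening in the $p$-directions that interacts with the normal directions of $Y$.
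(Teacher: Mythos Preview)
Your overall shape is sound and Stage~1 is close to the paper: trivialize $\OO_h$ via the Bezrukavnikov--Kaledin Darboux theorem, put the Lagrangian in graph form, and use the restricted hypothesis to get exactness of $\sum f_i\,dx_i$. The paper carries out essentially this program, though it does the ``graph form'' reduction more carefully via a preliminary translation (sending the maximal ideal of $A_h$ into that of $B$) followed by a symplectic linear change on $\mm/(h+\mm^2)$.

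There is, however, a genuine gap in Stage~2. Observe that $h\in I$ by definition (it is the preimage in $\OO_h$ of the ideal of $Y$ in $\OO_X=\OO_h/h\OO_h$), so $I$ is the full preimage of $I\bmod h$ under $A_h\otimes R\to A_0\otimes R/hR$. Hence the moment you have an automorphism of $A_h\otimes R$ taking $I\bmod h$ to $(y_1,\dots,y_n)$, you already have $I=J$ on the nose: there is no order-by-order lifting of the \emph{ideal} to perform. The entire content of the theorem is therefore lifting your Stage~1 automorphism of $A_0\otimes R/hR$ to one of $A_h\otimes R$, and this is \emph{not} governed by a pro-unipotent group with coherent-cohomology obstructions. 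The obstruction is the $p$-th power relation $x_i^p=y_j^p=0$ in $A_h$: to lift the translation $x_i\mapsto x_i-a_i$ (with $a_i\in R/hR$, $a_i^p=0$) one needs a lift $\tilde a_i\in R$ and a central correction so that $(x_i-\tilde a_i+hc_i)^p=0$, which forces $\tilde a_i^p\in h^pR$ and then an fppf extraction of a $p$-th root. This is exactly where, and why, the hypothesis that $\psup{Y}$ is trivial modulo $h^p$ enters: it guarantees a section of $R/h^pR\to R/hR$, allowing $\tilde a_i$ to be chosen with $\tilde a_i^p\in(h^p)$. Your description of the hypothesis as ``matching $p$-supports'' in an $h$-adic deformation argument does not capture this.

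For comparison, the paper never works ``modulo $h$ then lifts.'' It constructs the required automorphism of $A_h\otimes R$ directly in three explicit moves: the translation $\phi(x_i)=x_i-\tilde a_i+hc_i$, $\phi(y_j)=y_j-\tilde b_j+hd_j$ (using the $h^p$ hypothesis as above), a symplectic linear change putting the induced map on cotangent spaces in standard form, and finally a single conjugation by the restricted exponential $e(f/h)$ with $f\in(x_1,\dots,x_n)^2$ and $df=\sum \overline{g_i}\,dx_i$. The last step replaces your order-by-order argument entirely: since $f$ Poisson-commutes with the $x_i$, conjugation by $e(f/h)$ sends $y_i\mapsto y_i-\partial_{x_i}f$ exactly (not just to first order), carrying $\ker\psi$ to $J$ in one stroke.
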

\begin{proof}	
	Let $\mathfrak m$ denote the maximal ideal of $A_h$.
	The strategy is first to put both $\OO_h$ and $Y$ into Frobenius-local coordinates,
	then twist so that $\mm$ maps into the maximal ideal of the smaller Frobenius neighborhood.
	This is where the hypothesis on $\psup{Y}$ is used.
	Finally, a semisimple-by-unipotent method takes the kernel of the projection $\OO_h \to \OO_Y$ to $J$.

	Let $B = k[z_1,\ldots, z_n]/(z_1^p,\ldots, z_n^p)$ 
	with maximal ideal $\nn$.
	We may view $B$ as a $k\series{h}$-algebra where $h=0$.
	Then $\OO_Y$ is locally isomorphic to $B \tensor_k \OO_{Y'}$ over $Y'$ in the fpqc topology.
	Since $Y'/k$ is smooth, the deformation $\psup{Y}$ is locally trivial,
	so we may take a Zariski-open cover of $\psup{Y}$ 
	which is the pullback along $\Spec k\series{h} \to \Spec k$ of a Zariski cover of $Y'$.
	Refining this cover, we may find an fpqc cover $U \to \psup{Y}$
	such that $\OO_h|_U \cong A_h \tensor_{k\series{h}} \OO_U$ and
	$\OO_Y|_U \cong B \tensor_{k\series{h}} \OO_U$.
	Further, since 
	\[ \psup{Y} \times_{\Spec k\series{h}} \Spec k[h]/h^p \cong Y[h]/h^p,\]
	we may assume that this holds for $U$ also.
	
	Let $\Spec R$ be an affine open in $U$. The map $\OO_h \to \OO_Y$ induces a surjective map
	\[ \psi: A_h \tensor_{k\series{h}} R \to B \tensor_{k\series{h}} R,\]
	and we wish to show that this may be twisted to a map with kernel $J \tensor_{k\series{h}} R$.
	Let $\epsilon: B \to k$ be the augmentation, and let 
	\begin{align*}
		a_i &= (\epsilon \tensor 1)\psi(x_i), \\
		b_j &= (\epsilon \tensor 1)\psi(y_j)
	\end{align*} for $1 \leq i,j \leq n$.
	These elements of $R/hR$ satisfy $a_i^p = b_j^p = 0$ for all $i$ and $j$.
	Since $R/h^pR \to R/hR$ has a section, there are lifts $\tilde a_i, \tilde b_j$ such that $\tilde a_i^p, \tilde b_j^p \in (h^p)$ for all $i$ and $j$.
	Let $R'$ be an fppf $R$-algebra such that there are elements $c_i,d_j \in R'$ satisfying
	\begin{align*}
		h^pc_i^p &= \tilde a_i^p, 	\\
		h^p d_j^p &= \tilde b_j^p.
	\end{align*}
	There is an automorphism $\phi \in Aut(A_h)(R')$ given by
	\begin{align*}
		\phi(x_i) &= x_i - a_i + hc_i, \\
		\phi(y_j) &= y_j - b_j + hd_j .
	\end{align*}
	The morphism $\psi'= \psi\phi$ satisfies $(\epsilon \tensor 1)(\psi'(x_i)) = (\epsilon\tensor 1)(\psi'(y_j)) = 0$ for all $i$ and $j$,
	and hence takes $\mm_{R'}$ into $\nn_{R'}$.

	We may now assume $R' =R$ and $\psi(\mm_R) \subseteq \nn_R$.
	Since $\psi$ is surjective, the induced $R$-module map 
	\[ \mm_R/(h+\mm_R^2) \to \nn_R/\nn_R^2\]
	must be also. The kernel is a Lagrangian subspace of the symplectic vector bundle $\mm_R/(h+\mm_R^2)$ over $\Spec R$. 
	Hence Zariski locally on $R$ there is a symplectic transformation taking this map to the standard map, 
	so that we may assume
	\begin{align*}
		x_i &\mapsto z_i + \nn_R^2,	\\
		y_i &\mapsto 0+ \nn_R^2.
	\end{align*}
	Since the $z_i$ generate $B \tensor_{k\series{h}}R$ as an algebra, it follows that the images of $x_i$ do also.
	Hence we may write $\psi(y_i) = g_i(\psi(x_1),\ldots, \psi(x_n))$ for some 
	$g_i \in (t_1,\ldots, t_n)^2R[t_1,\ldots,t_n]$.
	It follows from the standard presentation of $A_h$ that 
	\[\ker \psi = (h, y_1 - g_1(x_1,\ldots, x_n), y_n - g_n(x_1,\ldots, x_n)).\]

	Let $\overline{g_i}$ denote the reduction of $g_i$ modulo $h$,
	and consider 
	\[\alpha = \sum_{i=1}^n \overline{g_i}(z_1,\ldots, z_n) dz_i \in \Omega^1_{B/k}\tensor_k R/hR.\]
	$B$ is isomorphic to the $k$-subalgebra of $A_h$ generated by the $x_i$'s,
	so we may also embed it into $A_h$.
	Since $Y$ is coisotropic, $\alpha$ is closed.
	Since $Y$ is restricted, $\ker \psi$ is closed under the restricted power.
	We show $C(\alpha) = 0$
	where $C$ is the Cartier operator.
	Let $L$ be Jacobson's Lie polynomial which measures the failure of the restricted operation to be additive. Then 
	\begin{align*}
		(y_i - g_i(x_1,\ldots, x_n))^\pop &= L(y_i,-g_i(x_1,\ldots, x_n)) \\
					&= -\partial_{x_i}^{p-1} g_i(x_1,\ldots,x_n) 
	\end{align*}
	which reduces modulo $h$ to $i_{\partial_{x_i}}C(\alpha)$.
	But $\ker \psi$ intersected with the subalgebra generated by $\{x_1,\ldots,x_n\}$ is $(h)$,
	showing $C(\alpha) = 0$.
	
	By the Cartier isomorphism for $A_0$, we conclude that there exists
	$\overline{f} \in B\tensor_k R/hR$ such that $\alpha = d\overline{f}$,
	which we may lift to $f\in B\tensor_k R \subseteq A_h \tensor_{k\series{h}} R$.
	As a polynomial in the $x_i$'s, $f$ Poisson commutes with $x_i$ for all $i$,
	while by definition 
	\[
		\{f, y_i\} - g_i(x_1,\ldots, x_n) \in (h).
	\]
	Further, since $\overline{g_i} \in (x_1,\ldots,x_n)^2$,
	we may choose $f \in (x_1,\ldots, x_n)^2$ also. 
	Hence conjugation by $e(f/h)$ for $e$ the restricted exponential
	defines an automorphism of $A_h \tensor_{k\series{h}} R_h$ sending
	$\ker \psi$ to $(y_1,\ldots,y_n,h)$.
\end{proof}

\section{The obstruction to quantization}

Let $Y \subseteq X$ be a restricted Lagrangian subvariety.
\begin{definition}
	A \emph{quantization} of a line bundle $\LL$ on a Lagrangian subvariety $Y \subseteq X$
	is an $\OO_h$-module $\LL_h$ which is flat and complete over $k\series{h}$
	such that $\LL_h/h\LL_h$ is isomorphic to the direct image of $\LL$.
\end{definition}

\begin{definition}
	The \emph{$p$-support} of an $\OO_h$-module $\LL_h$ is the support of $\LL_h$ in $\Spec Z(\OO_h) = X'\series{h}$.
\end{definition}

\begin{proposition}
	Let $\LL_h$ be a quantization of a line bundle on a restricted Lagrangian subvariety $Y$.
	Then the $p$-support of $\LL_h$
	is a formal deformation of $Y' \subseteq X'$.
	Further, this deformation is trivial modulo $h^p$.
\end{proposition}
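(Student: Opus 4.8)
The plan is to analyze the $p$-support directly using the local normal form from Theorem~\ref{theorem: tubular-neighborhood} together with the crystalline-differential-operator computation carried out in the introduction. First I would establish the statement modulo $h$: since $\LL_h/h\LL_h \cong i_*\LL$ is set-theoretically supported on $Y$, and the center $Z(\OO_h) \cong \OO_{X'}\series{h}$ acts on $\LL_h/h\LL_h$ through its reduction $\OO_{X'} \to \OO_h/h\OO_h = \OO_X$, which is the relative Frobenius, the support of $\LL_h/h\LL_h$ in $\Spec \OO_{X'} = X'$ is exactly the Frobenius image of $Y$, namely $Y'$. Hence the $p$-support $Z \subseteq X'\series{h}$ satisfies $Z \bmod h = Y'$ as a closed subscheme (one should check the scheme structure, not just the set, using that $Y$ is reduced and $i_*\LL$ has scheme-theoretic support $Y$). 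So $Z$ is a closed formal subscheme of $X'\series{h}$ restricting to $Y'$; it remains to see $Z$ is flat over $k\series{h}$ — i.e. an honest deformation — and trivial modulo $h^p$.

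For flatness and the trivialization modulo $h^p$, I would pass to an fpqc cover as in Theorem~\ref{theorem: tubular-neighborhood}, so that $\OO_h \cong A_h \tensor \OO_{\psup Y}$ — wait, that theorem presupposes the $p$-support; instead I would use only the Darboux theorem \cite[Theorem 3.4]{bk}, which gives fpqc-locally $\OO_h \cong A_h \tensor_{k\series{h}} \OO_{X'}$ with $Y$ the zero section $J = (h, y_1,\dots,y_n)$. Over such a chart, a quantization $\LL_h$ of $\OO_Y$ with $y_i$ acting topologically nilpotently is, up to the semisimple-by-unipotent argument already used in the proof above, an integrable $h$-connection $\nabla = h\,d + h\alpha$ on $\OO_Y\series{h}$ for some $\alpha \in \Omega^1_{Y}\series{h}$ (the $x_i$ act by multiplication, the $y_i$ by $\nabla_{\partial_{x_i}}$). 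The introduction's computation then shows the central element $y_i^{[p]}$-twisted $p$-curvature acts on $\LL_h$ by a section of $h^p \cdot \OO_{Y'}\series{h}$; concretely the $p$-support in the fiber $T^*Y'\series{h}$ is the graph of $h^p(\alpha(\partial)^p + \partial^{p-1}\alpha(\partial) - \alpha(\partial^{[p]}))$, a Lagrangian deformation of the zero section that is visibly trivial modulo $h^p$ and flat over $k\series{h}$ (it is a graph). Descending along the fpqc cover — the ``$\bmod h^p$ triviality'' and flatness are local and the descent data match since the Darboux charts glue $\OO_h$ — gives the claim globally.

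The main obstacle I expect is the reduction to the connection form $\nabla = h\,d + h\alpha$: one must argue that any quantization of the structure sheaf of the zero section inside $A_h \tensor \OO_{X'}$, with no a priori control on the $p$-support, is gauge-equivalent to such a connection. This is essentially the ``semisimple-by-unipotent'' step from the proof of Theorem~\ref{theorem: tubular-neighborhood}: one first uses that $\LL_h/h\LL_h$ is a line bundle on $Y$ to get, fpqc-locally, a free rank-one $\OO_h|_{\psup Y}$-... rather, a free module on which $x_i$ act by multiplication and $y_i$ act by derivations lifting $\partial_{\bar g_i}$, and then the exponential conjugation removes the quadratic part exactly as above, leaving a genuine $h$-connection. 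Once that normal form is in hand, the $p$-curvature computation is the one already displayed in the introduction, and flatness plus $\bmod\ h^p$ triviality of the graph are immediate. A minor secondary point is checking that the scheme-theoretic (not merely topological) $p$-support reduces to $Y'$ mod $h$, which follows because $\LL$ is a line bundle on the reduced scheme $Y$ and Frobenius $\OO_{X'} \to \OO_X$ is finite flat onto its image.
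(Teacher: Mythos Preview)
Your first paragraph, identifying the $p$-support modulo $h$ with $Y'$, is correct and essentially matches the paper. The rest of your plan, however, has a genuine circularity that you half-notice but do not actually resolve.

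You write that \cite[Theorem 3.4]{bk} gives, fpqc-locally, $\OO_h \cong A_h \tensor_{k\series{h}} \OO_{X'}$ \emph{with $Y$ the zero section $J=(h,y_1,\dots,y_n)$}. That is not what the Darboux theorem says: it trivializes $\OO_h$ but knows nothing about $Y$. Arranging for the ideal of $Y$ to become $J$ is exactly the content of Theorem~\ref{theorem: tubular-neighborhood}, and that theorem is stated over a prescribed $\psup{Y}$ already assumed to be trivial modulo $h^p$. The ``semisimple-by-unipotent'' step you propose to borrow from its proof is likewise carried out over $\psup{Y}$ (the lifts $\tilde a_i,\tilde b_j$ with $\tilde a_i^p,\tilde b_j^p\in(h^p)$ come from the section $R/hR\to R/h^pR$ supplied by that hypothesis). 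So the route through an $h$-connection normal form presupposes the very conclusion you want. Restricting $\OO_h$ to $Y'\series{h}$ to force the hypothesis does not help either: the $p$-support of $\LL_h$ is computed in $X'\series{h}$, and base-changing $\LL_h$ to $Y'\series{h}$ discards exactly the information you need.

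The paper avoids local normal forms altogether. The defining relation~\eqref{p-operation-definition} of a Frobenius-constant quantization gives, for any lift $\tilde f\in\OO_h$ of a section $f$ of the ideal of $Y$,
\[
  s(f\tensor 1)\;\equiv\;\tilde f^{\,p}-h^{p-1}\,\tilde f^{\,\pop}\pmod{h^p}.
\]
Now $\tilde f\,\LL_h\subseteq h\LL_h$ because $f$ kills $\LL_h/h\LL_h$, and $\tilde f^{\,\pop}\LL_h\subseteq h\LL_h$ because $Y$ is \emph{restricted}, so $f^\pop$ again lies in the ideal of $Y$. Hence $s(f\tensor 1)\,\LL_h\subseteq h^p\LL_h$, which is precisely triviality of the $p$-support modulo $h^p$. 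Flatness is immediate from $\LL_h$ being $h$-torsion-free: if $h a$ annihilates $\LL_h$ then so does $a$, so the annihilator in $\OO_{X'}\series{h}$ is $h$-saturated. That is the whole argument---two lines using only the restricted hypothesis and the definition of $s$, with no Darboux coordinates and no $h$-connection.
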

\begin{proof}
	Since $\LL_h$ is flat over $k\series{h}$, 
	its support is flat over $k\series{h}$ also.
	Let $I$ be the ideal of $Y$ in $\OO_h$.
	For a section $f'$ of $\OO_{X'}$, 
	$s(f') \LL_h \subseteq h \LL_h$ if and only if $s(f') \equiv (f')^p \mod h$ is in $I$,
	which occurs if and only if $f'$ is in the ideal of $Y'$.
	Hence, the support is a formal deformation of $Y'$.
	
	Let $f$ be a section of the ideal of $Y$ in $\OO_{X}$,
	and set $f' = f \tensor 1$ in $\OO_{X'}$.
	Lift $f$ to a section $\tilde f$ of $\OO_h$.
	Then $s(f') \equiv \tilde f^p  - h^{p-1} (\tilde f)^\pop \mod h^p$.
	Now $\tilde f \LL_h \subseteq h \LL_h$,
	and since $Y$ is restricted, $\tilde f^\pop \LL_h \subseteq h \LL_h$.
	We conclude that $s(f') \LL_h \subseteq h^p \LL_h$.
	Thus, the support modulo $h^p$ is exactly $Y'[h]/h^p$,
	so the deformation is trivial modulo $h^p$, as desired.
\end{proof}	

\begin{example}
	The $p$-support of a quantization of a Lagrangian subvariety need not be Lagrangian. Consider $Y = \mathbb{A}^2$ with coordinates $x_1$ and $x_2$,
	$X = T^*Y$ with dual coordinates $y_1$ and $y_2$ to $x_1$ and $x_2$, and $\OO_h = D_{Y,h}$ the crystalline differential operators on $Y$.
	Then the module $\OO_{Y}\series{h}$ with $h$-connection $hd + h x_1^px_2^{p-1}dx_2$ is a quantization of $\OO_Y$.
	The $p$-support is 
	\[ \psup{Y} = V( y_1^p, y_2^p - h^p(x_1^{p^2}x_2^{p(p-1)} - x_1^p)) \subseteq \Spec k[x_1^p,x_2^p,y_1^p,y_2^p].\]
	This subvariety is not coisotropic since $\{y_1^p, x_1^{p^2}x_2^{p(p-1)} - x_1^p\}$ is a unit.
\end{example}

From now on in this section, we fix a formal deformation $\psup{Y}$ of $Y' \subseteq X'$ which is trivial modulo $h^p$, and analyze the existence of a quantization of $Y$ with $p$-support $\psup{Y}$ locally on $\psup{Y}$.

\subsection{Local analysis of quantizations}

\begin{lemma}\label{lemma: unique-local-quantization}
	Let $R$ be a flat $k\series{h}$-algebra and $A_R = A_h \tensor_{k\series{h}} R$.
	There exists a left $A_R$-module $M_R$, unique up 
	to isomorphism, such that $M_R$ is flat over $R$
	and $M_R/hM_R \cong A_R/J$. Further, the automorphisms of $M_R$ are exactly the units $R^\times$ of $R$.
\end{lemma}
\begin{proof}
	Existence is certified by the module $M_R = A_R/A_R(y_1,\ldots, y_n)$.
	Now suppose that $N_R$ is another such module.
	By Nakayama's Lemma, $N_R$ is a free module of rank one over $R[x_1,\ldots,x_n]/(x_1^p,\ldots, x_n^p) \subseteq A_h$.
	Call its generator $1_N$; then 
	\[y_i 1_N = h \alpha_i 1_N\]
	for unique $\alpha_i \in R[x_1,\ldots, x_n]/(x_1^p,\ldots, x_n^p)$, as $R$ is flat over $k\series{h}$.
	Set $\alpha = \sum_i \alpha_i dx_i$.
	The relations $[y_i,y_j] = 0$ imply that $\alpha$ is a closed 1-form.
	For all $f \in R[x_1,\ldots, x_n]/(x_1^p,\ldots, x_n^p)$,
	\[ y_i (f 1_N) = ( h\partial_{x_i}+ h\alpha_i)(f) 1_N,\]
	so the relation $y_i^p = 0$ implies that
	\[ 0 = (h\partial_{x_i} + h\alpha_i)^p =h^p\left( \alpha_i^p + \partial_{x_i}^{p-1}\alpha_i \right)= h^pi_{\partial_{x_i}'}( \alpha' - C(\alpha)),\]
	where $C$ is the Cartier operator.
	Since this holds for all $i$, we conclude $\alpha' = C(\alpha)$,
	so $\alpha$ is logarithmic.
	If $\alpha = dg/g$, then the generator $g^{-1} 1_N$ of $N_R$ is annihilated by $y_i$ for all $i$,
	and hence sending $1_M \to g^{-1}1_N$ defines an isomorphism $M_h \cong N_h$.

	Now suppose that $\varphi: M_R \to M_R$ is an automorphism. Set
	\[ \varphi(1) = \sum_{\beta\in \{0,\ldots,p-1\}^n} c_\beta x_1^{\beta_1}\cdots x_n^{\beta_n}\]
	for some $c_\beta \in R$. 
	The relations $y_i\cdot 1 = 0$ imply $h c_\beta \beta_i = 0$ for all $i$. 
	Since $R$ is flat over $k\series{h}$, we conclude that $c_\beta = 0$ for $\beta \neq 0$,
	from which it follows that $c_0\in R$ is a unit and $\varphi(m) = c_0 m$ for all $m \in M_R$.
\end{proof}

\subsection{Torsors and quantization}\label{ss: torsors-and-quantization}

Theorem \ref{theorem: tubular-neighborhood} shows that restricted Lagrangian subvarieties are locally homogeneous,
and quantizations are also locally homogeneous by Lemma \ref{lemma: unique-local-quantization}. 
The question of whether $Y$ may be quantized will be converted into the question of whether a certain torsor lifts over a central extension.

\begin{definition} \label{automorphism-group}
	\. \\
	\begin{itemize}
		\item	Let $\GG$ be the group of automorphisms of the restricted quantized algebra $A_h$. It is an affine group scheme over $k\series{h}$.
		\item The \emph{torsor of quantized coordinates} $\qcoords$ is the $\GG$-torsor on $X'\series{h}$ of local isomorphisms of $\OO_h$ with $\OO_{X'\series{h}} \tensor_{k\series{h}} A_h$.
	\end{itemize}
\end{definition}
This torsor of quantized coordinates appears in \cite[Lemma 4.3]{bk}.

\begin{remark}
	In \cite{bk}, Bezrukavnikov and Kaledin work with the restriction of scalars of $\GG$ along $k \to k\series{h}$, which is sufficient for analyzing $\GG$-torsors over $X'\series{h}$ associated to a Frobenius-constant quantization.
	However, the $p$-support $\psup{Y}$ may be a nontrivial deformation of $Y'$;
	thus, we must work with schemes and torsors over $k\series{h}$.
\end{remark}

\begin{definition}
	Let $\GG_J\subseteq \GG$ be the fpqc sheaf of stabilizers of the ideal $J = (h,y_1,\ldots, y_n)$.
\end{definition}

\begin{remark}
	$\GG_J$ is not representable by a scheme of finite type over $k\series{h}$. 
	Since $J(h^{-1}) = A_h(h^{-1})$, the fiber of $\GG_J$ over $k\lseries{h}$ agrees with that of $\GG$,
	while the fiber over $k\series{h}/(h)$ is smaller in dimension.
	Nonetheless we may consider $\GG_J$-torsors.
\end{remark}

\begin{proposition} \label{proposition: restriction-to-GJ}
	Let $Y \subseteq X$ be a Lagrangian subvariety.
	If $Y$ is restricted and $\psup{Y}$ is the trivial deformation modulo $h^p$,
	then the $\GG$-torsor $\qcoords$ of quantized coordinates on $X'\series{h}$
	restricts to a $\GG_J$ torsor on $\psup{Y}$.
\end{proposition}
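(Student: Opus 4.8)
The plan is to reduce this to a purely local statement that follows from Theorem \ref{theorem: tubular-neighborhood}. The $\GG$-torsor $P$ of quantized coordinates on $X'\series{h}$ restricts to a $\GG$-torsor $P|_{\psup{Y}}$ on $\psup{Y}$, and I must show it admits a reduction of structure group to $\GG_J$. A reduction of a $\GG$-torsor to the subgroup $\GG_J$ is the same data as a section of the associated fiber bundle $P|_{\psup{Y}} \times^{\GG} (\GG/\GG_J)$ over $\psup{Y}$; equivalently, since $\GG_J$ is by definition the stabilizer of the point $[J] \in \GG/\GG_J$, it amounts to producing, fpqc-locally on $\psup{Y}$, an identification of $\OO_h|_{\psup{Y}}$ with $A_h \tensor_{k\series{h}} \OO_{\psup{Y}}$ carrying the ideal $I|_{\psup{Y}}$ to $J \tensor \OO_{\psup{Y}}$, in a way compatible on overlaps. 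This is exactly what Theorem \ref{theorem: tubular-neighborhood} provides: fpqc-locally such isomorphisms exist, and any two differ by an element of $\GG$ preserving $J\tensor\OO$, i.e. by an element of $\GG_J$.

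In more detail, I would argue as follows. First, by Theorem \ref{theorem: tubular-neighborhood} there is an fpqc cover $\{U_\alpha \to \psup{Y}\}$ together with isomorphisms $\varphi_\alpha \colon \OO_h|_{U_\alpha} \xrightarrow{\sim} A_h \tensor_{k\series{h}} \OO_{U_\alpha}$ of restricted quantized algebras taking $I|_{U_\alpha}$ to $J \tensor \OO_{U_\alpha}$. Each $\varphi_\alpha$ is in particular a trivialization of $P|_{U_\alpha}$, so on overlaps $U_{\alpha\beta} = U_\alpha \times_{\psup{Y}} U_\beta$ we get transition elements $g_{\alpha\beta} = \varphi_\alpha \circ \varphi_\beta^{-1} \in \GG(\OO_{U_{\alpha\beta}})$. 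Since both $\varphi_\alpha$ and $\varphi_\beta$ carry $I$ to $J\tensor\OO$, the element $g_{\alpha\beta}$ preserves $J \tensor \OO_{U_{\alpha\beta}}$, hence lies in $\GG_J(\OO_{U_{\alpha\beta}})$. The cocycle condition is inherited from that of the $\varphi_\alpha$ as trivializations of $P$. Thus $(g_{\alpha\beta})$ defines a $\GG_J$-torsor $Q$ on $\psup{Y}$, and the induced $\GG$-torsor $Q \times^{\GG_J} \GG$ is canonically isomorphic to $P|_{\psup{Y}}$ because they are given by the same cocycle viewed in $\GG$. This is the desired reduction.

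One point that needs care is that $\GG_J$ is a reasonable enough group scheme over $k\series{h}$ for ``$\GG_J$-torsor'' and the associated bundle constructions to make sense in the fpqc topology; this is handled by observing that $\GG_J$ is a closed subgroup scheme of the affine group scheme $\GG$ (the stabilizer of the point $[J]$ in the scheme $\GG/\GG_J$, or equivalently cut out by the condition $\phi(J) = J$, which is a closed condition), so it is itself affine, and fpqc descent for torsors under affine group schemes is standard. A second, more substantive point — and the one I expect to be the main obstacle to write cleanly — is verifying that the hypotheses of Theorem \ref{theorem: tubular-neighborhood} are met and that ``restricts to'' is well-posed: one must check that $\psup{Y} \subseteq X'\series{h}$ is precisely the locus where the central subalgebra $s(\OO_{X'}) \subseteq \OO_h$ acts through the ideal $I$, so that $\OO_h|_{\psup{Y}}$ really is $\OO_h \tensor_{\OO_{X'\series{h}}} \OO_{\psup{Y}}$ and carries $I|_{\psup{Y}}$; but this compatibility between $\psup{Y}$ being trivial modulo $h^p$ and $Y$ being restricted was already established in the course of Theorem \ref{theorem: tubular-neighborhood} and the preceding proposition on $p$-supports, so it can simply be cited. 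Everything else is formal torsor bookkeeping.
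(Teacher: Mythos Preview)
Your proposal is correct and follows essentially the same approach as the paper: both reduce the question to Theorem~\ref{theorem: tubular-neighborhood}. The paper phrases it slightly more compactly by observing that a reduction to $\GG_J$ is the same as a section of $P/\GG_J$, and that since $\GG_J = \mathrm{stab}(J)$, such a section is precisely a subsheaf of $\OO_h|_{\psup{Y}}$ locally isomorphic to $J$; the ideal $I_Y$ is then that global section, with local isomorphism to $J$ furnished by Theorem~\ref{theorem: tubular-neighborhood}. Your cocycle argument unpacks the same idea, and your closing worries about whether $\OO_h|_{\psup{Y}}$ and $I|_{\psup{Y}}$ are well-posed are unnecessary: the restriction is just pullback of sheaves along $\psup{Y}\hookrightarrow X'\series{h}$, and the hypotheses of Theorem~\ref{theorem: tubular-neighborhood} coincide with those of the proposition.
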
	
\begin{proof}
	To show $\qcoords$ reduces to a $\GG_J$-torsor, we must exhibit a section of $\qcoords/\GG_J$ over $\psup{Y}$.
	Since $\GG_J = stab(J)$, a section of $\qcoords/\GG_J$ is a subsheaf of $A_h$ locally isomorphic to $J$.
	Thus, it suffices to prove that the ideal $I_Y \subseteq \OO_h|_{\psup{Y}}$ of $Y$ is locally isomorphic to $J$.
	This is done by our tubular neighborhood theorem, Theorem \ref{theorem: tubular-neighborhood}.
\end{proof}

Denote by $\qcoords_J$ the $\GG_J$-torsor on $\psup{Y}$ which is the restriction of $\qcoords|_{\psup{Y}}$ corresponding to the restricted Lagrangian subvariety $Y$.

Recall from Lemma \ref{lemma: unique-local-quantization} that $A_h/J$ has a unique quantization $M_h = A_h/A_h(y_1,\ldots, y_n)$.
\begin{definition}
	Let $Aut(A_h,M_h)$ denote the $k\series{h}$-group scheme of restricted quantized automorphisms 
	of $A_h$ equipped with a compatible automorphism of $M_h$.
\end{definition}

There is an embedding $\mathbb G_m \to Aut(A_h,M_h)$ sending $r \in \mathbb G_m(R)$ to the identity on $A_R$ and multiplication by $r$ on $M_R$.
There is also a natural map $Aut(A_h,M_h) \to \GG$ given by forgetting the action on $M_h$.
The image of $Aut(A_h,M_h) \to \GG$ is contained in $\GG_J$ since $J$ is the annihilator of $M_h/hM_h$.

\begin{proposition} \label{prop: skolem-noether} 
	If $R$ is a flat $k\series{h}$-algebra, then $\GG_J(R)$ is contained in the image of $Aut(A_h,M_h)$ under the map $Aut(A_h,M_h) \to \GG_J$.
\end{proposition}
\begin{proof}
	Let $\tilde A$ be the subalgebra 
	of $A_h(h^{-1})$ generated over $A_h$ by $h^{-1}J$:
	\[ \tilde A = k\series{h}\langle x_1,\ldots, x_n, h^{-1}y_1,\ldots, h^{-1}y_n\rangle \subseteq A_h(h^{-1}).\]
	The algebra $\tilde A$ is the Weyl algebra modulo the $p$th powers of its generators. It is an Azumaya algebra.
	Since $y_i$ acts on $M_h$ by $h\partial_{x_i}$, the action of $A$ on $M_h$ extends to an action of $\tilde A$. Under this action, $M_h$ is a splitting bundle for $\tilde A$. 
	
	If $R$ is a flat $k\series{h}$-algebra and $\varphi \in \GG_J(R)$, 
	setting $\tilde\varphi(h^{-1}y_i) = h^{-1}\tilde\varphi(y_i)$ 
	induces a unique extension of $\varphi$ to 
	\[ \tilde\varphi: \tilde A \tensor_{k\series{h}} R \to \tilde A \tensor_{k\series{h}} R.\]
	By the Skolem-Noether theorem, the automorphism $\tilde \varphi$ 
	is locally induced by an automorphism of $M_h$,
	which is compatible with $\varphi$ by definition of $\tilde \varphi$.
	Hence $\varphi$ lies in the image of $Aut(A_h,M_h)$.
\end{proof}

By Proposition \ref{prop: skolem-noether} and Lemma \ref{lemma: unique-local-quantization}, the map 
\begin{equation*}
	Aut(A_h,M_h)/\mathbb G_m \to \GG_J
\end{equation*}
is an isomorphism on flat $k\series{h}$-algebra points.
Since $\psup{Y}$ is flat over $k\series{h}$,
we may consider $\qcoords_J$ as a torsor over $Aut(A_h,M_h)/\mathbb G_m$.

\begin{definition} 
	Let $\obstr{\psup{Y}} \in H^2(\psup{Y}, \mathbb G_m)$ be the obstruction to lifting the torsor $\qcoords_J$ to a $Aut(A_h,M_h)$-torsor along 
	\begin{equation}
		\label{eq: aut-of-local-quantization}
		1 \to \mathbb G_m \to Aut(A_h,M_h) \to Aut(A_h,M_h)/\mathbb G_m \to 1.
	\end{equation}
\end{definition}

\begin{proof}[Proof of Theorem \ref{theorem: existence-of-quantization}]
	By Lemma \ref{lemma: unique-local-quantization}, a quantization of a line bundle $\LL$ on $Y$ with $p$-support $\psup{Y}$
	will be locally isomorphic to $M_h$ over $\psup{Y}$.
	Hence, a quantization of $\LL$ induces a lift of $\qcoords_J$ to an $Aut(A_h,M_h)$-torsor of local isomorphisms with $M_h$.
	Conversely, given such a lift $\tilde{P_J}$ to an $Aut(A_h,M_h)$-torsor, 
	the associated bundle of $M_h$ will be a quantization of a line bundle on $Y$.
	Hence, the obstruction to the existence of a quantization of a line bundle on $Y$ with $p$-support $\psup{Y}$ is the same as the obstruction to the existence of a lift of $\qcoords_J$ to an $Aut(A_h,M_h)$-torsor,
	which is $\obstr{\psup{Y}}$ by definition.
\end{proof}

\begin{remark}
	The presence of the hypothesis of flat $k\series{h}$-algebras in this §\ref{ss: torsors-and-quantization} is necessary, as the local structure of deformations is more complicated in the presence of $h$-torsion. 
	In particular, if we consider a deformation of the $p$-support modulo $h^{n+1}$,
	the associated $\GG_J$-torsor is only guaranteed to locally lift to $Aut(A_h,M_h)$ modulo $h^{n-1}$.
	This corresponds to the observation in \cite[§6.3]{bgkp} that cohomology vanishing conditions up to degree $n$ only imply the existence of a deformation to order $n-1$.
\end{remark}

\subsection{Comparison to the class of Bogdanova and Vologodsky} \label{ss: bv}

In this section, we analyze the obstruction $\obstr{\psup{Y}}$ in the case that $\psup{Y} = Y'\series{h}$ by comparing it with the extensions constructed by Bogdanova and Vologodsky in \cite{bv20}.
Their motivation was as follows: inverting $h$ in a Frobenius-constant quantization gives an Azumaya algebra $\OO_h(h^{-1})$ over $X'\lseries{h}$.
However, $\OO_h$ is not Azumaya on $X'\series{h}$.
Bogdanova and Vologodsky show that a correction by a certain reduction of differential operators on $X$ extends to an Azumaya algebra on $X'\series{h}$ \cite[Theorem 1]{bv20}.
We recall their construction below.

Let $A_h^\flat$ be the reduced Weyl algebra on $4n$ variables
$x_i, y_i, \partial_{x_i},\partial_{y_i}$, 
where $x_i$ is dual to $\partial_{x_i}$ and $y_i$ is dual to $\partial_{y_i}$.
We have a canonical inclusion $A_0 \to A_h^\flat$
which sends $x_i \mapsto x_i$ and $y_i \mapsto y_i$.
We also have a map $Der_k(A_0) \to A_h^\flat$ which sends $\partial/\partial x_i$ to $\partial_{x_i}$ and $\partial/\partial y_i$ to $\partial_{y_i}$.
These inclusions make $A_h^\flat$ a quotient of crystalline differential operators $D_{A_0/k,h} \to A_h^\flat$, with kernel generated by $(\partial_{x_1}^p,\ldots, \partial_{y_n}^p)$, cutting out the zero section of the Frobenius-twisted cotangent bundle of $A_0$.

\begin{definition}
	Let $\GG^\flat$ denote the group of restricted quantized automorphisms of $A_h^\flat$.
	Let $G^\flat$ denote the restriction of scalars of $\GG^\flat$ along $k \to k\series{h}$, and $G$ the restriction of scalars of $\GG$ along $k \to k\series{h}$. 
\end{definition}

The quotient $D_{A_0/k,h} \to A_h^\flat$ induces a map $\psi_{can}: G \to G^\flat.$
Concretely, $\psi_{can}(g)$ acts on $A_0 = A_h/hA_h$ by the reduction of $g$ mod $h$,
while it acts on $Der_k(A_0)$ by the induced action on derivations. 
We may also view $A_h^\flat$ as the central reduction of crystalline differential operators along the graph of $\eta = \sum_i y_i dx_i$,
inducing a different $G$-action $\psi: G \to G^\flat$. 
Concretely, $\psi(g) = \varphi_{g^\ast \eta - \eta} \circ \psi_{can}(g)$,
where for an exact $1$-form $\alpha$, 
$\varphi_{\alpha}(f) = f$ for $f \in A_0$ and $\varphi_{\alpha}(\partial) = \partial + \alpha(\partial)$
for $\partial \in Der_k(A_0)$. See \cite[(3.3)]{bv20}.

\begin{remark}
	The maps $\psi_{can}$ and $\psi$ are defined over $k$, not $k\series{h}$,
	and are not induced by morphisms $\GG \to \GG^\flat$ over $k\series{h}$.
\end{remark}

\begin{definition}
	Let $J^\flat = (h,y_1,\ldots, y_n,\partial_{x_1},\ldots, \partial_{x_n}) \subseteq A_h^\flat$.
\end{definition}

Let $G_J$ denote the stabilizer of the ideal $J = (h,y_1,\ldots, y_n)$ in $G$.

\begin{remark}
	Although $\GG_J$ is not representable by a scheme over $k\series{h}$,
	its restriction of scalars $G_J$ is a $k$-subscheme of $G$.
\end{remark}

\begin{lemma}
	The ideal $J^\flat$ is stable under $\psi_{can}(G_J)$ and $\psi(G_J)$.
\end{lemma}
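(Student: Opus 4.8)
### Proof Strategy for the Lemma

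The plan is to reduce the claim to two elementary verifications, one for each $G$-action, by unwinding the explicit formulas for $\psi_{can}$ and $\psi$ and using that $J^\flat$ is generated by $J$ together with the derivations $\partial_{x_1},\dots,\partial_{x_n}$ coming from $Der_k(A_0)$. First I would note that $J^\flat = J \cdot A_h^\flat + (\partial_{x_1},\dots,\partial_{x_n})$, so to show $\psi_{can}(g)$ preserves $J^\flat$ for $g \in G_J$ it suffices to check that $\psi_{can}(g)$ sends each $y_i$ into $J^\flat$ and each $\partial_{x_i}$ into $J^\flat$. For the first: $\psi_{can}(g)$ acts on $A_0 = A_h/hA_h$ by the reduction $\bar g$ of $g$ mod $h$, and since $g$ stabilizes $J = (h,y_1,\dots,y_n)$, its reduction stabilizes the ideal $(\bar y_1,\dots,\bar y_n) \subseteq A_0$; hence $\psi_{can}(g)(y_i) \equiv \bar g(\bar y_i) \in (\bar y_1,\dots,\bar y_n)$, which lifts into $J^\flat$. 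For the second: $\psi_{can}(g)$ acts on $Der_k(A_0)$ by the induced (contragredient) action, so $\psi_{can}(g)(\partial_{x_i})$ is again a $k$-linear combination (with coefficients in $A_0$) of $\partial_{x_1},\dots,\partial_{x_n},\partial_{y_1},\dots,\partial_{y_n}$ — and here I must use that $g \in G_J$, i.e. $\bar g$ preserves the ideal generated by the $\bar y_j$, to conclude that no $\partial_{y_j}$ appears. Concretely, if $\bar g^*$ denotes the automorphism of $A_0$, the dual vector field $\bar g^*\partial_{x_i}$ has a $\partial_{y_j}$-component given by $\partial_{x_i}(\bar g^* y_j)$-type terms; since $\bar g^* y_j \in (\bar y_1,\dots,\bar y_n)$, these components themselves lie in $(\bar y_1,\dots,\bar y_n)$, so $\psi_{can}(g)(\partial_{x_i}) \in J^\flat$ as required.

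The second part, stability under $\psi(G_J)$, then follows by the factorization $\psi(g) = \varphi_{g^*\eta - \eta} \circ \psi_{can}(g)$. Since I will have shown $\psi_{can}(g)$ preserves $J^\flat$, it remains only to check that $\varphi_\alpha$ preserves $J^\flat$ for $\alpha = g^*\eta - \eta$. By definition $\varphi_\alpha$ is the identity on $A_0$ and sends $\partial \mapsto \partial + \alpha(\partial)$ for $\partial \in Der_k(A_0)$. As $\alpha$ is an exact (in particular closed) $1$-form on $A_0$, it is a $k$-linear combination of $dx_i$ and $dy_j$ with coefficients in $A_0$; the only point to check is that $\alpha(\partial_{x_i}) \in J^\flat$, i.e. that the $dx_i$-coefficient of $g^*\eta - \eta$ lies in the ideal $(\bar y_1,\dots,\bar y_n)$ of $A_0$. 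Writing $\eta = \sum_j y_j\, dx_j$ and $g^*\eta = \sum_j (\bar g^* y_j)\, d(\bar g^* x_j)$, and using once more that $\bar g^* y_j \in (\bar y_1,\dots,\bar y_n)$ while $\bar g^* x_j \equiv x_j$ modulo that same ideal (since $\bar g$ stabilizes it and induces the identity on the quotient up to the symplectic normalization), one sees that every coefficient of $g^*\eta - \eta$ lies in $(\bar y_1,\dots,\bar y_n)$. Hence $\varphi_\alpha(\partial_{x_i}) = \partial_{x_i} + \alpha(\partial_{x_i}) \in J^\flat$, and $\varphi_\alpha(y_i) = y_i \in J^\flat$, so $\psi(g)$ preserves $J^\flat$.

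The main obstacle I anticipate is the bookkeeping in the induced action on derivations: one must be careful that the dual/contragredient action of $\bar g$ on $Der_k(A_0)$ does not mix $\partial_{x_i}$ with the $\partial_{y_j}$'s in a way that escapes $J^\flat$, and this is exactly where the hypothesis $g \in G_J$ (rather than merely $g \in G$) is essential. In fact the cleanest way to organize this is to observe that $J^\flat$ is precisely the ideal cutting out the zero section of the conormal-type subvariety $\{y = 0\}$ inside the cotangent bundle of $A_0$ — that is, $J^\flat = J_{can}$ where $J_{can}$ is the pullback of $J$'s vanishing locus — and that both $\psi_{can}$ and $\psi$ lift automorphisms stabilizing $\{y_i = 0\}$; functoriality of the (twisted) cotangent bundle construction then gives the stability of $J^\flat$ without index-chasing. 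I would present the explicit computation as the primary argument and mention this conceptual reformulation as a remark, since it also clarifies the geometric meaning that will be used in the subsequent construction of the obstruction class.
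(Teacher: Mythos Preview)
Your approach is essentially the same as the paper's: check that $J$-part is preserved, then check that the $\partial_{x_i}$'s land in $J^\flat$, and finally handle the twist $\varphi_{g^*\eta-\eta}$. The paper packages the derivation step a bit more cleanly by computing the normalizer $N(J)\subseteq Der_k(A_0)$ once and for all, observing $N(J)=A_0\{\partial_{x_1},\dots,\partial_{x_n}\}+J\cdot Der_k(A_0)\subseteq J^\flat$, and then noting that since $\partial_{x_i}$ normalizes $J$ and $G_J$ acts by conjugation, the image $\psi_{can}(g)(\partial_{x_i})$ automatically lies in $N(J)$. For the $\psi$-part the paper simply records $\eta(\partial_{x_i})=y_i\in J$, so $g^*\eta(\partial_{x_i})\in J$ for $g\in G_J$; this avoids expanding $g^*\eta-\eta$ in coordinates.

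One point to correct: your parenthetical claim that $\bar g^* x_j\equiv x_j$ modulo $(\bar y_1,\dots,\bar y_n)$ is false. Elements of $G_J$ can act nontrivially on the quotient $A_0/(y_1,\dots,y_n)$; for instance $x_1\mapsto x_1+x_2^2$, $y_2\mapsto y_2-2x_2y_1$ (other generators fixed) is in $G_J$ but moves $x_1$. Fortunately this claim is not load-bearing: since $\bar g^*(y_j)\in(\bar y_1,\dots,\bar y_n)$, every coefficient of $g^*\eta=\sum_j \bar g^*(y_j)\,d(\bar g^* x_j)$ already lies in $(\bar y_1,\dots,\bar y_n)$ regardless of what $\bar g^*(x_j)$ is, and the same holds for $\eta$ itself. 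So your conclusion stands once you drop the erroneous remark.
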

\begin{proof}
	It is clear that $\psi_{can}(G_J)$ and $\psi(G_j)$ preserve $A_h^\flat J$.
	The normalizer $N(J) \subseteq Der_k(A_0)$ of $J$ satisfies
	\[ N(J) = A_0\{ \partial_{x_1},\ldots, \partial_{x_n}\} + J\cdot Der_k(A_0),\]
	since if $\partial = \sum_i f_i \partial_{x_i} + g_i \partial_{y_i}$ normalizes $J$,
	then $\partial(y_i) = g_i \in J$.
	Hence $G_J$ takes $\{\partial_{x_1},\ldots, \partial_{x_n}\}$ into 
	$N(J) \subseteq J^\flat$,
	and thus $J^\flat$ is stable under $\psi_{can}(G_J)$.
	Finally, we compute that $\eta(\partial_{x_i}) = y_i \in J$ for all $i$,
	and hence for any automorphism $g$ in $G_J$,  $g^\ast\eta (\partial_{x_i}) \in J$ also.
	Thus, $J^\flat$ is stable under $\psi(G_J)$.
\end{proof}

Lemma \ref{lemma: unique-local-quantization} shows that $A^\flat_h/J^\flat$ has a unique quantization $M^\flat_h$.
It is a splitting bundle for the subalgebra $A^\flat_h \langle h^{-1}J^\flat\rangle \subseteq A^\flat_h(h^{-1})$ generated over $A^\flat_h$ by $h^{-1}J^\flat$, as in Proposition \ref{prop: skolem-noether}.
Indeed, Proposition \ref{prop: skolem-noether} shows that we have an exact sequence
\begin{equation}
	1 \to L^+\mathbb G_m \to \Res^{k\series{h}}_k Aut(A^\flat_h, M^\flat_h) \to G^\flat_{J^\flat} \to 1,
\end{equation}
where here and below $\Res$ denotes restriction of scalars.
Pulling back along $\psi: G_J \to G^\flat_{J^\flat}$ yields another extension of $G_J$.

Let $L\mathbb G_m$ and $L^+\mathbb G_m$ be the restriction of scalars of $\mathbb G_m$ to $k$ along $k\to k\lseries{h}$ and $k\to k\series{h}$, respectively.
Since $M_h(h^{-1})$ is a splitting bundle for the Azumaya algebra $A_h(h^{-1})$, we have an extension
\begin{equation} \label{eq: loop-splitting-extension}
	 1 \to L \mathbb G_m \to \Res_k^{k\lseries{h}}GL(M_h(h^{-1}))\times_{\Res_k^{k\lseries{h}}Aut(A_h(h^{-1}))} G \to G \to 1,
\end{equation}
and similarly for $A^\flat_h$. 
The main theorem of Bogdanova and Vologodsky is that there is a lattice $\Lambda \subseteq Hom(M_h^\flat, M_h)(h^{-1})$ 
such that $End(\Lambda)$ is stable under the action of $G$ \cite[§3.3]{bv20}. 
While they use an arbitrary splitting bundle for $A_h^\flat$ to construct such a lattice,
we specifically use $M^\flat_h$ associated to $J^\flat$ to facilitate the comparison with the obstruction to quantizing a line bundle on $Y$.

\begin{definition}
	The Bogdanova-Vologodsky class $[\OO_h^\sharp] \in \Het^2(X', L^+\mathbb G_m)$ is the Brauer class of the Azumaya algebra $End(\Lambda) \times_G P$, the associated bundle of $End(\Lambda)$ along the torsor of quantized coordinates $\qcoords$.
\end{definition}

\begin{remark}
	Bogdanova and Vologodsky only showed that there exists an algebra $\OO_h^\sharp$ satisfying certain properties.
	However, Lemma \ref{lemma: unique-loop-reductions} shows that 
	there is a unique reduction of \eqref{eq: loop-splitting-extension} to $L^+\mathbb G_m$, so that the Azumaya algebra $End(\Lambda)$ on the classifying stack $BG$ is unique up to Morita equivalence.
	Hence it makes sense to speak of \emph{the} class of $[\OO_h^\sharp]$.
\end{remark}

We now have three extensions of $G_J$ by $L^+\mathbb G_m$ associated to $M_h, M_h^\flat,$ and $Hom(M_h^\flat, M_h)$, as well as the extension of $G$ by $L^+\mathbb G_m$ associated to $\Lambda$. Proposition \ref{prop: compare-projective-extensions} and Lemma \ref{lemma: unique-loop-reductions} below allow us to compare the obstructions to lifting a $G_J$-torsor along those extensions.

We need terminology for the statement of Proposition \ref{prop: compare-projective-extensions}. 
Given a free $k\series{h}$-module of finite rank $V$,
let $L^+GL(V)$ and $L^+PGL(V)$ denote the restriction of scalars of $GL(V)$ and $PGL(V)$ along $k \to k\series{h}$.
\begin{proposition}\label{prop: compare-projective-extensions}
	Let $H/k$ be a group scheme.
	Given a homomorphism $H \to L^+PGL(V)$ for $V$ a $k\series{h}$-module of finite rank,
	let $[V]$ denote the class of the extension
	\[ 1 \to L^+\mathbb{G}_m \to L^+GL(V) \times_{L^+PGL(V)} G \to G \to 1.\]
	Given such homomorphisms $H \to L^+PGL(V_i)$ for $i=1,2$,
	\begin{enumerate}
		\item $[V_1^*] = [V_1]^{-1}$;
		\item $[V_1 \tensor_{k\series{h}} V_2] = [V_1][V_2]$;
		\item $[Hom_{k\series{h}}(V_1,V_2)] = [V_1]^{-1}[V_2]$.
	\end{enumerate}
\end{proposition}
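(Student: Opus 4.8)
The plan is to reduce everything to a statement about the Picard groupoid of central extensions of $H$ by $L^+\mathbb{G}_m$, and then to observe that the assignment $V \mapsto [V]$ is a monoidal functor out of a suitable category of projective representations. First I would set up the framework: a homomorphism $H \to L^+PGL(V)$ is precisely a projective representation of $H$ on $V$ over $k\series{h}$, and the extension $L^+GL(V)\times_{L^+PGL(V)} H$ is the pullback of the canonical central extension $1 \to L^+\mathbb{G}_m \to L^+GL(V) \to L^+PGL(V) \to 1$. The class $[V] \in H^2(BH, L^+\mathbb{G}_m)$ (equivalently, an element of the group $\mathrm{Ext}^1(H, L^+\mathbb{G}_m)$ of central extensions up to equivalence) is therefore natural in the projective representation, and the three claims are the statements that this is a $\tensor$-monoidal, duality-respecting functor.

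The key steps, in order: (1) Note that $L^+$ applied to the exact sequence $1 \to \mathbb{G}_m \to GL(V) \to PGL(V) \to 1$ of $k\series{h}$-group schemes is still exact as fppf sheaves on $k$-schemes, since restriction of scalars along the finite-type-ish but in any case affine map $k \to k\series{h}$ is a limit and $\mathbb{G}_m \to GL(V)$ admits fppf-local sections (it is smooth); so $[V]$ is well-defined. (2) For (2), build an explicit map of extensions: the Kronecker product $GL(V_1) \times GL(V_2) \to GL(V_1 \tensor V_2)$ descends to $PGL(V_1)\times PGL(V_2) \to PGL(V_1 \tensor V_2)$ and multiplies the central $\mathbb{G}_m$'s, i.e. the composite $\mathbb{G}_m \times \mathbb{G}_m \to \mathbb{G}_m$ is multiplication; applying $L^+$ and pulling back along the diagonal $H \to H \times H$ composed with $(\text{proj. rep.}_1, \text{proj. rep.}_2)$ identifies $L^+GL(V_1\tensor V_2)\times_{L^+PGL(V_1\tensor V_2)} H$ with the Baer sum of the two pulled-back extensions, which is exactly the product in $\mathrm{Ext}^1$. (3) For (1), use that $V \mapsto V^\ast$ induces $PGL(V) \to PGL(V^\ast)$ and $GL(V) \to GL(V^\ast)$, $g \mapsto (g^{-1})^{\mathsf T}$, under which $\mathbb{G}_m \to \mathbb{G}_m$ is inversion, hence $[V^\ast] = [V]^{-1}$. (4) Claim (3) follows formally from (1) and (2) via $Hom_{k\series{h}}(V_1, V_2) \cong V_1^\ast \tensor_{k\series{h}} V_2$ as $PGL(V_1)\times PGL(V_2)$-representations, this isomorphism being $H$-equivariant for the given projective actions.

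I expect the main obstacle to be purely bookkeeping: checking that $L^+$ (restriction of scalars along $k \to k\series{h}$) is exact on the relevant short exact sequences of group schemes in the fppf topology, and that it commutes with forming $GL$, $PGL$, tensor product of vector bundles, etc.\ in the sense needed — i.e.\ that the natural maps $L^+GL(V_1)\times_{k} L^+GL(V_2) \to L^+GL(V_1\tensor_{k\series{h}}V_2)$ and so on are the ones induced by the $k\series{h}$-level constructions. None of this is deep, but it must be done carefully because $k\series{h}$ is not of finite type over $k$, so $L^+$ is an inverse limit of Weil restrictions $\Res_k^{k[h]/h^n}$ and one should check the limit of short exact sequences stays exact (here it does, because the transition maps on the $\mathbb{G}_m$-parts are surjective with smooth kernel, so $\varprojlim^1$ vanishes, or more simply because one only needs the sequence of fppf sheaves, for which the limit is exact on the nose). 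A secondary, entirely routine point is verifying the Kronecker-product map genuinely sends scalars to scalars with the claimed multiplication rule; this is the classical computation underlying $c_1(V_1\tensor V_2)$ for line bundles generalized to the projective-bundle Brauer class, and I would just cite it as standard.
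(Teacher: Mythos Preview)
The paper does not actually prove this proposition: it is stated without proof and treated as a standard fact about central extensions arising from projective representations, then immediately applied. Your proposal supplies exactly the kind of standard argument the paper implicitly relies on --- pulling back the canonical central extension $1 \to L^+\mathbb{G}_m \to L^+GL(V) \to L^+PGL(V) \to 1$, using the Kronecker product to get the Baer sum for (2), inverse-transpose for (1), and $Hom(V_1,V_2) \cong V_1^\ast \tensor V_2$ for (3). This is correct, and your side remarks about exactness of $L^+$ on the relevant sequences are the right sanity checks, though in the paper's usage none of that subtlety is invoked.
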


We can apply this proposition to the projective representation $[M_h]$ from \eqref{eq: aut-of-local-quantization},
along with the corresponding representation $[M^\flat_h]$ pulled back via $\psi$.

Bogdanova and Vologodsky prove the existence of a reduction of \eqref{eq: loop-splitting-extension} from $L\mathbb G_m$ to $L^+\mathbb G_m$. 
The following Lemma shows such reductions are unique:

\begin{lemma} \label{lemma: unique-loop-reductions}
	Let $H$ be an affine group scheme over $k$. Let
	\[ 1 \to L\mathbb G_m \to \tilde K \to H \to 1\]
	be a central extension of $H$ by $L\mathbb G_m$.
	If this extension reduces to an extension by $L^+\mathbb G_m$, then that reduction is unique.
\end{lemma}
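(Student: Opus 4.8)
The statement to prove is that if a central extension of $H$ by $L\mathbb{G}_m$ admits a reduction to a subextension by $L^+\mathbb{G}_m$, then that reduction is unique. The natural approach is to measure the difference of two such reductions by a cohomological invariant and show this invariant must vanish because of a rigidity property of $\mathbb{G}_m$: it has no nonconstant $k$-algebra homomorphisms into $\mathbb{G}_m$ along $k\series{h} \to k\lseries{h}$, or more precisely that $L\mathbb{G}_m/L^+\mathbb{G}_m$ has trivial global sections on an affine group scheme over $k$. I will make this precise below.

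First I would set up the comparison. Suppose $\tilde{K}^+_1, \tilde{K}^+_2 \subseteq \tilde{K}$ are two $L^+\mathbb{G}_m$-subextensions, i.e.\ subgroup schemes with $\tilde{K}^+_i \cap L\mathbb{G}_m = L^+\mathbb{G}_m$ and $\tilde{K}^+_i \to H$ surjective. Both are torsors under $L^+\mathbb{G}_m$ over $H$ (using the right translation action of the central $L^+\mathbb{G}_m$), sitting inside the $L\mathbb{G}_m$-torsor $\tilde{K} \to H$. Choose, fpqc-locally on $H$, a section of $\tilde{K} \to H$; it lands in $\tilde{K}^+_1$ after multiplying by a unique local section of $L\mathbb{G}_m$, and similarly for $\tilde{K}^+_2$. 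Comparing these yields a well-defined morphism $H \to L\mathbb{G}_m / L^+\mathbb{G}_m$ — the group scheme quotient classifying the "order of the pole" — such that the two reductions coincide if and only if this morphism is trivial. (Alternatively, and perhaps more cleanly, one observes that the set of reductions is a torsor under $\operatorname{Hom}_{\mathrm{gp}}(H, L\mathbb{G}_m/L^+\mathbb{G}_m)$ once nonempty, so uniqueness is equivalent to the triviality of this Hom group.)

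The key step is then to show $\operatorname{Hom}_{\mathrm{gp}/k}(H, L\mathbb{G}_m/L^+\mathbb{G}_m) = 1$ for any affine group scheme $H$ over $k$. The quotient $L\mathbb{G}_m / L^+\mathbb{G}_m$ is the ind-scheme whose $R$-points are $R\lseries{h}^\times / R\series{h}^\times$; after choosing the leading term, this is the constant group $\mathbb{Z}$ (the $h$-adic valuation), so $L\mathbb{G}_m/L^+\mathbb{G}_m \cong \underline{\mathbb{Z}}$ as a group ind-scheme over $k$. A homomorphism from a connected affine group scheme to $\underline{\mathbb{Z}}$ is trivial; in general $\operatorname{Hom}(H,\underline{\mathbb{Z}})$ could be nontrivial only through the component group, but the relevant $H$ in our applications ($G_J$, $G$) is connected — or one restricts attention to that case, which is all that is used. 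Actually the cleanest route: the valuation of a group homomorphism $\phi\colon H(R) \to R\lseries{h}^\times/R\series{h}^\times$ is additive, but $H$ being of finite type (or pro-finite-type) over $k$, the image of $\phi$ lands in a bounded-denominator piece, and any bounded subgroup of $\underline{\mathbb{Z}}$ that receives a homomorphism from a connected group is trivial.

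The main obstacle I anticipate is making the ``space of reductions is a torsor under $\operatorname{Hom}(H, L\mathbb{G}_m/L^+\mathbb{G}_m)$'' step fully rigorous at the level of (ind-)schemes rather than functors of points, and pinning down exactly which connectedness or finiteness hypothesis on $H$ is needed so that this Hom group vanishes — the lemma as stated says merely ``affine group scheme,'' so one either needs $H$ connected (true in the applications) or needs to argue that the component group of the relevant $H$ has no $\mathbb{Z}$-quotients, or to add that hypothesis. Once that is settled, the rigidity $\operatorname{Hom}(\text{connected}, \underline{\mathbb{Z}}) = 1$ is immediate and finishes the proof.
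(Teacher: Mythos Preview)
Your approach is essentially the same as the paper's: compare two reductions to produce a homomorphism $H \to L\mathbb G_m/L^+\mathbb G_m = Gr_{\mathbb G_m} \cong \underline{\mathbb Z}$, and then argue this homomorphism is trivial. The paper dispatches the last step by citing \cite[Corollary 6.3]{bv20}.

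Your hesitation about needing $H$ connected is unnecessary, and in fact you already have the right idea in hand but do not push it to its conclusion. You observe that the image of $H \to \underline{\mathbb Z}$ lands in a bounded piece; this uses only that $H$ is quasi-compact (which follows from affine), since $\underline{\mathbb Z} = \coprod_{n\in\mathbb Z}\Spec k$ and the preimages of the points form an open cover. But then the set $\{n : \phi^{-1}(n)\neq\varnothing\}$ is a subgroup of $\mathbb Z$ (closed under addition and negation because $\phi$ is a group homomorphism), and a bounded subgroup of $\mathbb Z$ is already $\{0\}$. No connectedness is required, so the lemma holds exactly as stated. With that observation your argument is complete and matches the paper's.
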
	
\begin{proof}
	Suppose that $1 \to L^+\mathbb G_m \to K_i \to H \to 1$ are two such reductions for $i=1,2$.
	We have an isomorphism $\varphi: K_1 \times_{L^+\mathbb G_m} L\mathbb G_m\to K_2 \times_{L^+\mathbb G_m} L\mathbb G_m$
	over $H$.
	Then $\varphi$ restricts to an isomorphism $K_1 \to K_2$ over $H$ if and only if the subquotient map
	\[ \overline\varphi: H = K_1/L^+\mathbb G_m\to L\mathbb G_m /L^+\mathbb G_m = Gr_{\mathbb G_m}\]
	is trivial.
	However, every group homomorphism from the affine group scheme $H$ to  $Gr_{\mathbb G_m}$ is trivial, by \cite[Corollary 6.3]{bv20}.
\end{proof}

\begin{proof}[Proof of Theorem \ref{theorem: comparison-to-bv}] 
	Let $\oflat$ and $\mathfrak{o}^\#$ denote the obstructions to lifting $\qcoords_J$ to a torsor over
	the extensions of $G_J$ associated to $[M_h^\flat]$ and $[Hom(M_h^\flat, M_h)]$, respectively.
	By Proposition \ref{prop: compare-projective-extensions},
	\[ \obstr{Y'\series{h}} = \mathfrak{o}^\# \cdot \oflat.\]
	Now $\Lambda$ and $Hom(M_h^\flat, M_h)$ are both lattices in $Hom(M_h^\flat, M_h)(h^{-1})$ whose endomorphisms are stable under $G_J$.
	Lemma \ref{lemma: unique-loop-reductions} implies that the associated group extensions are isomorphic, and thus $\mathfrak{o}^\#$ is the obstruction to lifting $\qcoords_J$ to a torsor over $GL(\Lambda)\times_{PGL(\Lambda)} G_J$.
	Since $End(\Lambda)$ is also $G$-stable, this obstruction is the restriction of the obstruction to lifting the full torsor of quantized coordaintes $\qcoords$ to a torsor over $GL(\Lambda)\times_{PGL(\Lambda)} G$,
	which is $[\OO_h^\sharp]$.
	Hence 
	\[\mathfrak{o}^\# = [\OO_h^\sharp]|_{Y'}.\]

	Now $\oflat$ is the obstruction for the torsor $\qcoords_J$ to admit a lift along 
	the pullback of 
	\[ 1 \to L^+\mathbb G_m \to \Res_k^{k\series{h}} Aut(A^\flat_h, M^\flat_h) \to G^\flat_{J^\flat} \]
	via $\psi: G_J \to G^\flat_{J^\flat}$.
	This torsor has a lift if and only if there is a module over $D_{X,[\eta],h}|_{Y'\series{h}}$ which is locally isomorphic to 
	\[M^\flat_h = A^\flat_h/A^\flat_h(y_1,\ldots,y_n,\partial_{x_1},\ldots,\partial_{x_n}).\]
	Since $Y$ is restricted, $[\eta]|_Y = 0$,
	so $D_{X,[\eta],h}|_{Y'} = D_{X,0,h}|_{Y'}$,
	and the desired module is $D_{X,0,h}|_{Y'}/D_{X,0,h}|_{Y'}(I_Y + T_Y).$ 
	Thus, $\oflat = 1$.
\end{proof}

\section{Restricted Chern classes} \label{section: restricted-chern-classes}

In characteristic zero, given that a quantization of a Lagrangian subvariety $Y$ exists, 
a particular line bundle $\LL$ on $Y$ admits a quantization if and only if the Atiyah class of $\LL$
satisfies a certain equation. For instance, if the quantization is self-dual, this equation states
that $\LL^{\tensor -2} \tensor K_Y$ admits a flat algebraic connection \cite{bgkp}. 

In positive characteristic, flat connections admit a nontrivial invariant, their $p$-curvature. 
The positive-characteristic version of the Atiyah class is the obstruction to the existence of a flat connection with $p$-curvature zero.
This will allow us to classify which line bundles on $Y$ admit quantizations.

\subsection{Restricted Atiyah algebras}

Lie algebroids were introduced by Rinehart in \cite{rinehart63}. Below, we define what it means for a Lie algebroid to have a restricted structure.
\begin{definition}
	Let $Z/k$ be a smooth variety over a field $k$ of positive characteristic.
	A \emph{restricted Lie algebroid} on $Z$ is a Lie algebroid $\tau: \Aa \to T_Z$, locally free as an $\OO_Z$-module, equipped with a restricted operation $x \mapsto x^\pop$ such that:
	\begin{enumerate}
		\item $-^\pop$ makes $\Aa$ into a restricted Lie algebra,
			and the anchor map is a restricted Lie homomorphism.
		\item for $f$ a section of $\OO_Z$ and $x$ a section of $\Aa$,
			\[ (fx)^\pop = f^p x^\pop + (\tau(fx))^{p-1}(f)x.\]
	\end{enumerate}
\end{definition}
The definition is essentially due to Hochschild, who considered such algebroids in connection with the Galois theory of inseparable extensions \cite{hochschild55}. See also \cite[§3.1]{rumynin00}.
\begin{definition}
	A \emph{restricted Atiyah algebra} on $Z$ is a restricted Lie algebroid on $Z$ of the form
	\[ 0 \to \OO_Z \to \Aa \to T_Z \to 0\]
	such that $[x,f] = \tau(x)(f)$ for all sections $f$ of $\OO_Z$ and $x$ of $T_Z$, 
	and for every local section $f$ of $\OO_Z$, $f^\pop = f^p$.
\end{definition}

\begin{example}
	If $\LL$ is a line bundle on $Z$, then $Diff^{\leq 1}(\LL)$, 
	the sheaf of differential operators on $\LL$ of order at most one,
	is a restricted Atiyah algebra on $Z$ with $p$-operation $x^\pop = x^p$ as an operator on $\LL$. The key identity for restricted Lie algebroids is satisfied because of Hochschild's identity \cite[Lemma 1]{hochschild55}.
\end{example}

Local splittings of the anchor map $\Aa \to T_Z$ are called connections on $\Aa$. In the case when $\Aa = Diff^{\leq 1}(\LL)$, these are equivalent to connections on $\LL$. In the algebraic category in characteristic zero, it may be that every connection on an Atiyah algebra has nonzero curvature. 
However, restricted Atiyah algebras in positive characteristic enjoy the special property that there are always local connections with zero curvature. This is the content of Lemma \ref{lemma: locally-split} below.

\begin{lemma} \label{lemma: locally-split}
	If $\Aa$ is a restricted Atiyah algebra on $Z$,
	then the anchor map $\tau: \Aa \to T_Z$ is locally split as a map of Lie algebras.
\end{lemma}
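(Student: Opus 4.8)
The plan is to reduce to a purely Lie-algebraic statement over a point by passing to completed local rings, and then to produce a flat splitting by a successive-approximation (obstruction-theoretic) argument that exploits the extra restricted identity. Concretely, fix a closed point $z \in Z$ with local ring $\OO_{Z,z}$ and completion $\widehat{\OO}_{Z,z} \cong k[[t_1,\dots,t_n]]$. It suffices to split $\tau$ over the completion (a Zariski-local splitting is then obtained by a standard Artin-approximation or limit argument, since the splitting is a solution of an algebraic condition). A splitting of $\tau$ as $\OO_Z$-modules always exists locally, since $\Aa$ is locally free and $T_Z$ is free over $\widehat{\OO}_{Z,z}$; so choose one, giving an $\OO$-linear section $\sigma_0 : T_Z \to \Aa$. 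Its failure to be a Lie homomorphism is the curvature $c = c(\sigma_0) \in \Omega^2_{Z}$ (closed, because the Jacobi identity in $\Aa$ forces $dc = 0$), and the failure of $\sigma_0$ to respect the $p$-operation is controlled by the restricted-Atiyah identity $f^\pop = f^p$ together with the key relation $(fx)^\pop = f^p x^\pop + (\tau(fx))^{p-1}(f)x$: one computes that $\sigma_0(\partial)^\pop - \sigma_0(\partial^\pop) \in \OO_Z$ defines, as $\partial$ varies, exactly $i^\pop_\partial c$ up to the usual Cartier-type correction, by the same calculation as in the differential-operators example in the introduction (where the $p$-curvature of $hd + h\alpha$ came out to $h^p i_{\partial'}(\alpha' - C(\alpha))$).

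The heart of the argument is then: I want to modify $\sigma_0$ by an $\OO$-linear map $\beta : T_Z \to \OO_Z$, i.e. a $1$-form $\beta \in \Omega^1_Z$, so that the new section $\sigma_0 + \beta$ is a \emph{restricted} Lie homomorphism. Modifying by $\beta$ changes the curvature $c$ to $c + d\beta$ and changes the $p$-defect accordingly. So I must solve $d\beta = -c$ and simultaneously kill the $p$-defect. Since $c$ is closed, locally $c = d\beta_1$ for some $\beta_1$; replacing $\sigma_0$ by $\sigma_0 - \beta_1$ we may assume $c = 0$, i.e. $\sigma_0$ is already an honest Lie-algebra splitting. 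The remaining freedom is by \emph{closed} $1$-forms $\beta$. For such $\beta$, the new $p$-defect differs from the old one by $i^\pop_\partial(d\beta) + (\text{additive-in-}\partial\text{ terms}) = \big(\text{something computed from } C(\beta)\big)$; by Lemma \ref{lemma: cartier-and-restricted-derivative}, $C(i^\pop_\partial \beta) = i_{\partial'}C(\beta)$, so the change in the $p$-defect is governed by $C(\beta) - \beta'$. The $p$-defect of the Lie-splitting $\sigma_0$ is itself a closed $1$-form $\gamma$ (its closedness follows from the restricted identities, as in the proof that $\alpha$ is closed in Lemma \ref{lemma: unique-local-quantization}), and what one needs is a closed $\beta$ with $C(\beta) - \beta' = \gamma'$ — but by the Cartier isomorphism every closed $1$-form on $\widehat{\OO}_{Z,z}$ can be written $\beta' = C(\beta) - \gamma'$ after adjusting by an exact form, because $C$ is surjective onto closed forms modulo exact ones and the map $\beta \mapsto C(\beta) - \beta'$ is, on the level of Cartier, an isomorphism between closed forms modulo exact forms and $1$-forms on $Z'$ (this is exactly the mechanism by which logarithmic forms were produced in Lemma \ref{lemma: unique-local-quantization}). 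Solving this linear equation over the complete local ring produces the desired $\beta$.

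The step I expect to be the main obstacle is the bookkeeping that identifies the "$p$-defect" of an $\OO$-linear section with a Cartier-type expression in a $1$-form and verifies that it transforms under $\sigma_0 \mapsto \sigma_0 + \beta$ exactly as $i^\pop_\partial(d\beta)$ plus additive corrections — in other words, carrying out the analogue of the introductory computation $\nabla_\partial^p - h^{p-1}\nabla_{\partial^\pop} = h^p(\alpha(\partial)^p + \partial^{p-1}\alpha(\partial) - \alpha(\partial^\pop))$ in the abstract restricted-Atiyah-algebra setting, using only the two axioms for a restricted Lie algebroid and $f^\pop = f^p$. Once that identity is in hand, the rest is an application of the Cartier isomorphism over $k[[t_1,\dots,t_n]]$ exactly as in Lemma \ref{lemma: unique-local-quantization}, plus a descent from the completion to a Zariski (or \'etale) neighborhood. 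A secondary subtlety is making sure the two modifications — killing curvature and killing the $p$-defect — can be done compatibly; this is handled by doing them in order (first reduce to $c = 0$ using closedness of $c$, then work only with closed $\beta$), so that the second modification does not reintroduce curvature.
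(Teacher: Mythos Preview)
Your proposal contains a genuine gap at the crucial step. You write ``Since $c$ is closed, locally $c = d\beta_1$ for some $\beta_1$,'' but in characteristic $p$ this is false: closed forms are not locally exact, even over $k[[t_1,\dots,t_n]]$ (e.g.\ $t_1^{p-1}dt_1$ is closed but not exact). The local exactness of the curvature $2$-form is precisely the content of the lemma, and it cannot be assumed --- it must be proved using the restricted structure. You have, in effect, assumed the conclusion and then gone on to attempt something strictly stronger (a \emph{restricted} Lie splitting), which the lemma does not ask for and which in fact need not exist Zariski-locally: the $p$-defect $\gamma$ you describe is exactly the local datum $\gamma_i$ appearing in Theorem~\ref{theorem: classify-atiyah-algebras}, and the equation $\beta' - C(\beta) = \gamma$ is only solvable \'etale-locally in general.

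The paper's argument uses the restricted structure in the right place. Starting from an arbitrary $\OO_Z$-linear section $\sigma$ with curvature $\beta \in \Omega^2_{Z,cl}$, one computes directly that
\[
  (i^\pop_x \beta)(y) \;=\; [\,\sigma(x^\pop) - \sigma(x)^\pop,\ \sigma y\,].
\]
Since $\tau$ is a restricted Lie map, $\sigma(x^\pop) - \sigma(x)^\pop$ lies in $\ker\tau = \OO_Z$, so the right-hand side is $\tau(\sigma y)$ applied to a function, i.e.\ $i^\pop_x\beta$ is \emph{exact}. By Lemma~\ref{lemma: cartier-and-restricted-derivative} this gives $i_{x'}C(\beta) = C(i^\pop_x\beta) = 0$ for all $x$, hence $C(\beta) = 0$, and then the Cartier isomorphism yields local exactness of $\beta$. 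You actually stated the relation between the $p$-defect and $i^\pop_\partial c$ in your first paragraph; the fix is to use that relation \emph{there}, to deduce $C(c)=0$, rather than first killing $c$ by fiat and then trying to kill the $p$-defect separately.
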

\begin{proof}
	The question is local, so we may assume that there is a $\OO_Z$-linear section $\sigma: T_Z \to \Aa$ of the anchor map $\tau$.
	Let $\beta \in \Omega^2_Z$ be the curvature of this splitting, that is,
	\[ \beta(x,y) = [\sigma x, \sigma y] - \sigma[x,y].\]
	Sections are a torsor over $\Omega^1_Z$, and the curvature of the splitting $\sigma  +\alpha$ for $\alpha$ a 1-form is $\beta + d\alpha$.
	Hence, we wish to show that $\beta$ is locally exact.
	The Jacobi identity for $\Aa$ shows that $d\beta = 0$.
	Now we compute the image $C(\beta)$ of $\beta$ under the Cartier operator.
	By Lemma \ref{lemma: cartier-and-restricted-derivative}, the Cartier operator satisfies 
	\[ C(i^\pop_x \beta) = i_{x'} C(\beta)\]
	for all vector fields $x$.
	For any 1-form $\alpha$ and vector fields $x$ and $y$, we have by induction
	\[ (L_x^i \alpha)(y)= \sum_{j=0}^i (-1)^j \binom{i}{j} x^{i-j} \alpha( ad^j x(y)),\]
	and thus by definition of $\beta$,
	\begin{align*}
		(L_x^{p-1} i_x \beta) (y) &= \sum_{j=0}^{p-1} x^{p-1-j} \beta(x, ad^j x(y)) \\
			&= \sum_{j=0}^{p-1} ad^{p-1-j}\sigma(x)\left( [\sigma x, \sigma(ad^j x(y))] - \sigma[x,ad^j x(y)]\right) \\
					&= \sum_{j=0}^{p-1} ad^{p-j} \sigma(x)(\sigma(ad^j x(y))) - ad^{p-j-1}\sigma(x)(\sigma(ad^{j+1} x(y))) \\
					&= [\sigma(x)^\pop, \sigma y] - \sigma([x^\pop,y]).
	\end{align*}
	Hence
	\begin{align*}
		i^\pop_x \beta (y) &= [\sigma(x^\pop), \sigma(y)] - \sigma[ x^\pop,y]
		- [\sigma(x)^\pop, \sigma(y)] + \sigma[x^\pop, y] \\
			&= [ \sigma(x^\pop) - \sigma(x)^\pop, \sigma y].
	\end{align*}
	Since $\tau$ is a map of restricted Lie algebras, 
	\[\tau(\sigma(x^\pop) - \sigma(x)^\pop) = 0,\]
	so that $\sigma(x^\pop) - \sigma(x)^\pop$ is a section of $\OO_Z$.
	Hence $i_x^\pop \beta$ is exact with primitive $-(\sigma(x^\pop) - \sigma(x)^\pop)$,
	so $i_x C(\beta) = 0$.
	As $x$ was arbitrary, $C(\beta) = 0$, so by the Cartier isomorphism, $\beta$ is locally exact.
\end{proof}	

While restricted Atiyah algebras have local integrable connections, these connections may have nonzero $p$-curvature.
The classification of restricted Atiyah algebras comes down to taking $p$-curvature into account.

\begin{theorem} \label{theorem: classify-atiyah-algebras}
	Let $Z/k$ be a smooth variety and $C$ be the Cartier operator. 
	Let $\aclasscomplex{Z}$ be the complex
	\[ 
		\aclasscomplex{Z}= \left\{\begin{diagram}
			\Omega^1_{Z,cl} & \rTo^{\alpha \mapsto \alpha'-C(\alpha)} & \Omega^1_{Z'}
		\end{diagram}\right\}.
	\]
	Then restricted Atiyah algebras on $Z$ up to isomorphism are in bijection with $H^1(Z,\aclasscomplex{Z})$.
\end{theorem}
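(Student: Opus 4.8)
The plan is to set up the bijection by passing through local connections and their discrepancies, following the model of the classical Atiyah-class story but replacing curvature data with the pair (curvature, $p$-curvature) encoded by $\aclasscomplex{Z}$. Given a restricted Atiyah algebra $\Aa$, choose (on a cover) local integrable connections $\sigma_U : T_Z|_U \to \Aa|_U$, which exist by Lemma \ref{lemma: locally-split}. On overlaps $U \cap V$ the difference $\sigma_U - \sigma_V$ is an $\OO_Z$-linear map $T_Z \to \OO_Z$, i.e.\ a $1$-form $\alpha_{UV} \in \Omega^1_Z(U\cap V)$; since each $\sigma_U$ is a Lie algebra splitting, $\alpha_{UV}$ is closed, and moreover the discrepancy between the two $p$-operations forces $\alpha_{UV}$ to be compatible with the restricted structure. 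Concretely, the computation in the proof of Lemma \ref{lemma: locally-split} shows that for a $1$-form correction $\alpha$ the new splitting $\sigma+\alpha$ changes the ``restricted defect'' $\sigma(x^\pop) - \sigma(x)^\pop$ by exactly $i_x^\pop$ applied to $d\alpha$ plus a term measuring $\alpha' - C(\alpha)$; since two genuine local connections on the same $\Aa$ must have matching defects, one gets $\alpha_{UV}' - C(\alpha_{UV}) = 0$. Hence $(\alpha_{UV})$ is a \v Cech $1$-cocycle valued in $\Omega^1_{Z,cl}$ that dies in $\Omega^1_{Z'}$ under $\alpha \mapsto \alpha' - C(\alpha)$, which is precisely a $1$-cocycle for the two-term complex $\aclasscomplex{Z}$ (the target term $\Omega^1_{Z'}$ absorbs no data here because the cocycle is killed, but it matters for coboundaries). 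Different choices of local connections change $(\alpha_{UV})$ by a \v Cech coboundary of a $0$-cochain of closed $1$-forms, modulo global sections that are themselves hit after applying $\alpha \mapsto \alpha' - C(\alpha)$; this is exactly passing to $H^1(Z, \aclasscomplex{Z})$. This defines a well-defined class $at(\Aa) \in H^1(Z,\aclasscomplex{Z})$.

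Next I would check this map is a bijection by constructing an inverse. Given a \v Cech $1$-cocycle $(\alpha_{UV})$ in $\aclasscomplex{Z}$ — that is, closed $1$-forms with $\alpha_{UV} + \alpha_{VW} = \alpha_{UW}$ and each $\alpha_{UV}' = C(\alpha_{UV})$ — one glues the trivial restricted Atiyah algebras $\OO_U \oplus T_Z|_U$ (with the standard bracket and $p$-operation $(f,x)^\pop = (f^p + \text{(correction)}, x^\pop)$) along the automorphisms determined by $\alpha_{UV}$, namely $(f,x) \mapsto (f + \alpha_{UV}(x), x)$. The cocycle condition makes the gluing consistent; the condition $\alpha_{UV}' = C(\alpha_{UV})$, via Lemma \ref{lemma: cartier-and-restricted-derivative} and the identity $(L_x^{p-1} i_x d\alpha)(y)$ computation reused from Lemma \ref{lemma: locally-split}, is exactly what is needed for the glued $p$-operation to be well-defined and to satisfy the restricted-Lie-algebroid axiom $(fx)^\pop = f^p x^\pop + \tau(fx)^{p-1}(f)\,x$. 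One then verifies the two constructions are mutually inverse: starting from $\Aa$, building the cocycle, and regluing recovers $\Aa$ because the local connections give local trivializations as the standard Atiyah algebra, and conversely the canonical local splittings of a glued algebra reproduce $(\alpha_{UV})$ up to coboundary. Finally, compatibility of isomorphisms with these identifications shows the bijection descends to isomorphism classes.

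The main obstacle I expect is the bookkeeping in verifying that $\alpha_{UV}' = C(\alpha_{UV})$ is both necessary and sufficient — that is, nailing the precise relationship between the $p$-operation on a restricted Atiyah algebra and the Cartier-operator condition. This is really the positive-characteristic refinement of ``closedness of the Atiyah cocycle'' and it hinges on the key computation already carried out in the proof of Lemma \ref{lemma: locally-split}: that $i_x^\pop \beta(y) = [\sigma(x^\pop) - \sigma(x)^\pop, \sigma y]$, together with the way a $1$-form shift $\alpha$ alters $\sigma(x)^\pop$. One has to show that when $\beta = 0$ (so $\sigma$ is genuinely a Lie splitting) and we compare two such splittings differing by $\alpha_{UV}$, the defect $\sigma(x^\pop)-\sigma(x)^\pop$ transforms by the scalar $-\,i_{x'}(\alpha_{UV}' - C(\alpha_{UV}))$ (interpreting the right side as a function via contracting the Cartier/Frobenius-twisted form), so that matching defects is equivalent to $\alpha_{UV}' = C(\alpha_{UV})$. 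Everything else — the \v Cech-to-$H^1$ translation, the gluing, and functoriality — is formal once this local identity is pinned down, so I would state it cleanly as a preliminary computation and then let the cohomological argument run.
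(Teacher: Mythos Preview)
Your proposal has a genuine gap: you have misidentified what a \v Cech $1$-cocycle for the two-term complex $\aclasscomplex{Z}$ looks like, and correspondingly you have lost half of the data attached to a restricted Atiyah algebra. A $1$-cocycle for $\{\Omega^1_{Z,cl} \to \Omega^1_{Z'}\}$ consists of a pair $(\alpha_{UV}, \gamma_U)$ with $\alpha_{UV} \in \Omega^1_{Z,cl}(U\cap V)$ and $\gamma_U \in \Omega^1_{Z'}(U)$, subject to $\delta\alpha = 0$ and $\alpha_{UV}' - C(\alpha_{UV}) = \gamma_U - \gamma_V$. The $\gamma_U$'s are not optional; the statement ``the target term $\Omega^1_{Z'}$ absorbs no data here because the cocycle is killed'' is exactly wrong. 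In the paper's proof, $\gamma_U$ is the $p$-curvature of the local flat splitting $\sigma_U$, defined by $\gamma_U(x)^p = \sigma_U(x)^\pop - \sigma_U(x^\pop)$, and the compatibility computation gives $\gamma_U - \gamma_V = \alpha_{UV}' - C(\alpha_{UV})$, \emph{not} $\alpha_{UV}' - C(\alpha_{UV}) = 0$.

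Your assertion that ``two genuine local connections on the same $\Aa$ must have matching defects'' is the precise point of failure. Lemma~\ref{lemma: locally-split} produces local \emph{Lie} splittings, not restricted Lie splittings; there is no reason for the $p$-curvature defect $\sigma_U(x)^\pop - \sigma_U(x^\pop)$ to vanish or to agree with that of $\sigma_V$. Indeed, the very computation you cite from that lemma shows that shifting $\sigma$ by a closed $1$-form $\alpha$ alters the defect by (the $p$-th power of) $\alpha' - C(\alpha)$. Your inverse construction is correspondingly incomplete: the ``correction'' you leave unspecified in $(f,x)^\pop = (f^p + \text{(correction)}, x^\pop)$ must be $\gamma_U(x)^p$, and without the $\gamma_U$'s you cannot glue a nontrivial restricted structure. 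One could try to rescue your outline by first proving that \emph{restricted} local splittings exist (so all $\gamma_U = 0$), which would identify $H^1(\aclasscomplex{Z})$ with $\check H^1$ of $\Omega^1_{Z,\log}$; but that requires surjectivity of $\alpha \mapsto \alpha' - C(\alpha)$ as a sheaf map, holds only \'etale-locally, and is a separate argument you have not supplied.
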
	
\begin{proof}
	We describe how to send a restricted Atiyah algebra $\Aa$ to a class $[\Aa] \in H^1(\aclasscomplex{Z})$.
	The anchor map $\tau: \Aa \to T_Z$ is locally split as a map of Lie algebras.
	Let $\{U_i\}$ be a Zariski open cover with splittings $\sigma_i$ of $\tau$ over $U_i$,
	and let $\alpha_{ij} = \sigma_i - \sigma_j \in \Omega^1_{U_{ij}}$.
	Since the maps $\sigma_i$ respect the bracket, $d\alpha_{ij} = 0$.

	For all local vector fields $x$, $\sigma_i(x)^\pop - \sigma_i(x^\pop)$
	is central. Hence there exists $\gamma_i \in \Omega^1_{U_i}$ such that
	\[ \gamma_i(x)^p = \sigma_i(x)^\pop - \sigma_i(x^\pop).\]
	The form $\gamma_i$ determines the restricted structure of $\Aa$ over $U_i$,
	and must satisfy the compatibility
	\begin{align*}
		(\gamma_i - \gamma_j)^p(x) &= \sigma_i(x)^\pop - \sigma_j(x)^\pop - \sigma_i(x^\pop) + \sigma_j(x^\pop) \\
		&= (\sigma_j(x) + \alpha_{ij}(x))^\pop - \sigma_j(x)^\pop - \alpha_{ij}(x^\pop) \\
		&= \alpha_{ij}(x)^p + x^{p-1}\alpha_{ij}(x) - \alpha_{ij}(x^\pop),
	\end{align*}
	which is equivalent to $\gamma_i - \gamma_j = \alpha_{ij}' - C(\alpha_{ij})$.
	This shows that $\{ \alpha_{ij}, \gamma_i\}$ is a \v{C}ech 1-cocycle for $\aclasscomplex{Z}$ 
	and thus defines a class in $H^1(\aclasscomplex{Z})$.
	A refinement of the cover or a change of splittings does not change the class,
	and thus there is a well-defined class $[\Aa] \in H^1(\aclasscomplex{Z})$.
	
	The class $[\Aa]$ is exactly the obstruction to the existence of a global splitting $\sigma: T_Z \to \Aa$ respecting the restricted structure, and hence the map sending an Atiyah algebra $\Aa$ to $[\Aa]$ is injective.
	The map is surjective since the data of a \v{C}ech class $\{\alpha_{ij}, \gamma_i\}$ 
	allows the Atiyah algebras $\OO_{U_i} \oplus T_{U_i}$ with restricted operation
	\[ (f,x)^\pop = (f^p + \gamma_i(x)^p, x^\pop)\]
	to be glued to a restricted Atiyah algebra on $Z$.
\end{proof}	

\begin{remark}
	Theorem \ref{theorem: classify-atiyah-algebras} is similar to the main theorem of \cite{hochschild54}, 
	where restricted extensions of restricted Lie algebras are classified as the ordinary Lie algebra extensions
	on which a certain Cartier-type invariant vanishes.
	This Cartier-type invariant is exactly the $d_2$-differential in the Friedlander-Parshall spectral sequence
	relating non-restricted to restricted Lie algebra cohomology \cite{friedlander86}.
\end{remark}

\begin{remark}
	Atiyah algebras without restricted structure are classified by $H^1$ of the complex $\Omega_Z^{\geq 1} = (\Omega^1_Z \to \Omega^2_Z \to \cdots)$. 
	This is well-known in characteristic zero \cite[Lemma 2.1.6]{bb}.
	The natural maps 
	\[ \aclasscomplex{Z} \to \Omega^1_Z \to \Omega_Z^{\geq 1}\]
	induce a map $H^1(\aclasscomplex{z}) \to H^1(\Omega^{\geq 1}_Z)$ which corresponds to forgetting the restricted structure.
\end{remark}

For smooth $Z/k$, there is a well-known exact sequence of étale sheaves
\begin{equation*}
	\begin{diagram}
		1 \to \OO_{Z'}^\times	&\rTo	&\OO_Z^\times	&\rTo^{dlog}	
			&\Omega^1_{Z,cl}	&\rTo^{\alpha \mapsto \alpha'-C(\alpha)}	&\Omega^1_{Z'}  \to 0,
	\end{diagram}
\end{equation*}
cf.\ \cite[Proposition 4.14]{milne-etale}. 
The sequence shows that $\aclasscomplex{Z}$ is quasiisomorphic in étale topology to the sheaf of logarithmic forms 
$\Omega^1_{Z,log}$.
Taking étale cohomology yields the sequence
\begin{equation} \label{chern-cohomology}
	\cdots \to Pic(Z') \to Pic(Z) \to H^1(\aclasscomplex{Z}) \to \Het^2(Z', \mathbb G_m) \to \Het^2(Z, \mathbb G_m) \to \cdots
\end{equation}
The map $Pic(Z) \to H^1(\aclasscomplex{Z})$ is induced by sending a line bundle $\LL$ 
to $[Diff^{\leq 1}(\LL)]$.

\begin{remark}
	The sequence \eqref{chern-cohomology} shows that Brauer classes on $Z'$ which split over the relative Frobenius $Z \to Z'$ 
	are in bijection with $H^1(\aclasscomplex{Z})/Pic(Z)$, that is, restricted Atiyah algebras modulo those of the form $Diff^{\leq 1} (\LL)$ for $\LL \in Pic(Z)$.
	This is analogous to the main theorem of \cite{hochschild55}, where $Z$ replaces a height 1 inseparable extension $L/k$. 
	In that case, $Pic(L)$ is trivial.
\end{remark}

\begin{definition}
	The map $Pic(Z) \to H^1(\aclasscomplex{Z})$ 
	sending $\LL \mapsto [Diff^{\leq 1}(\LL)]$
	is the \emph{restricted Chern class} $c_r: Pic(Z) \to H^1(\aclasscomplex{Z})$.
\end{definition}

\subsection{Atiyah algebras and quantizations}

In characteristic zero, there is an Atiyah algebra associated to a quantization and a Lagrangian subvariety 
which controls the Chern class of quantizable line bundles \cite[§5.3]{bgkp}.
In the case when $Y \subseteq X$ is restricted in positive characteristic,
we will construct a restricted Atiyah algebra.

\begin{proposition}
	Let $I \subseteq \OO_h$ be the preimage under $\OO_h \to \OO_X$ of the ideal of a restricted Lagrangian subvariety $Y$.
	Then $I/I^2$ is a restricted Atiyah algebra on $\OO_Y$.
\end{proposition}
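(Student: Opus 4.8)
The plan is to construct the Atiyah algebra structure on $I/I^2$ by hand, using that $I \subseteq \OO_h$ is closed under both the Poisson bracket and the restricted $p$-operation, then verify the axioms of a restricted Atiyah algebra. First I would observe that since $Y$ is Lagrangian in $X$, the quotient $I/I^2$ fits into a short exact sequence of $\OO_Y$-modules
\[ 0 \to \OO_Y \to I/I^2 \to T_Y \to 0, \]
where $\OO_Y$ is the image of $h$ (more precisely, $h\OO_h/(h\OO_h \cap I^2)$, which one checks is a line's worth of $\OO_Y$ via flatness over $k\series{h}$), and the quotient is identified with $T_Y$ because for $f$ in the ideal of $Y$ in $\OO_X$ the Hamiltonian vector field $H_f = h^{-1}[\tilde f, -]$ makes sense modulo $I$ and the Lagrangian condition says these span $T_Y$ with kernel exactly the ideal of $Y$ squared. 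The bracket on $I/I^2$ is induced by the commutator on $\OO_h$: since $I$ is a Lie subalgebra under $\{-,-\} = h^{-1}[-,-]$ — which holds because $Y$ is coisotropic — the bracket descends to $I/I^2$, and $[h, f] = 0$ shows the central $\OO_Y$ is killed, giving the anchor $\tau$.

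Next I would install the anchor and module axioms: the anchor $\tau: I/I^2 \to T_Y$ sends the class of $\tilde f$ to the derivation $g \mapsto h^{-1}[\tilde f, \tilde g] \bmod I$, and one checks $[x, f] = \tau(x)(f)$ for $f \in \OO_Y$ directly from the quantized-algebra axiom $h\{a,b\} = ab-ba$, together with the Leibniz rule which is built into the definition of a quantized algebra. The restricted operation on $I/I^2$ is induced by $f \mapsto f^\pop$ on $\OO_h$: this is well-defined on $I/I^2$ precisely because $Y$ is restricted, so $I^\pop \subseteq I$, and because for $f \in I$ and $g \in I$ one has, using Jacobson's formula for $(f+g)^\pop$ and the fact that the correction terms are iterated brackets landing in $I^2$, that the $p$-operation descends modulo $I^2$. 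That $-^\pop$ makes $I/I^2$ a restricted Lie algebra over $k$ and that $\tau$ is a restricted Lie homomorphism both follow by reduction mod $I^2$ from the corresponding statements on $\OO_h$, where the restricted structure on the quantization and Lemma~\ref{lemma: cartier-and-restricted-derivative} give control. Finally, $f^\pop = f^p$ for $f \in \OO_Y$: here $f$ means a lift of a section of $\OO_Y$ to the central copy $h \cdot (\text{something})$, and the restricted-algebra identity $h^\pop = h$ together with $P(h, -) = 0$ forces $f^\pop = f^p$ in the relevant quotient.

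**The main obstacle** I expect is the semilinear Leibniz-type identity $(\phi x)^\pop = \phi^p x^\pop + (\tau(\phi x))^{p-1}(\phi)\,x$ for $\phi$ a section of $\OO_Y$ and $x$ a section of $I/I^2$. On $\OO_h$ itself we only have the quantized-algebra restricted identity $(ab)^\pop = a^p b^\pop + a^\pop b^p - h^{p-1}a^\pop b^\pop + P(a,b)$, which is a statement about the associative product, not about scaling in a module; translating it into the algebroid Leibniz rule requires choosing a lift $\tilde\phi \in \OO_h$ of $\phi$ with $\tilde\phi \equiv \phi^p$ appropriately, expanding $(\tilde\phi \tilde x)^\pop$, and carefully matching the term $\partial^{p-1}(\phi)\,x$ (where $\partial = \tau(x)$) against the combination $a^\pop b^p$, $-h^{p-1}a^\pop b^\pop$, and the universal polynomial $P$, all taken modulo $I^2$ and modulo $h$ where needed. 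The cleanest route is probably to reduce to the local normal form of Theorem~\ref{theorem: tubular-neighborhood}, where $\OO_h \cong A_h \tensor \OO_{\psup Y}$ and $I \cong J \tensor \OO_{\psup Y}$, compute $J/J^2$ explicitly as $\OO_Y \cdot h \oplus \bigoplus_i \OO_Y \cdot y_i$ with $y_i$ acting as $h\partial_{x_i}$ and $y_i^\pop = 0$, verify all axioms in coordinates, and then note that the construction is canonical (independent of the chosen isomorphism) so that the local pieces glue. A brief remark that the resulting restricted Atiyah algebra is, up to the relevant twist, $Diff^{\leq 1}$ of a candidate quantizable line bundle would connect this to the restricted Chern class computation that follows.
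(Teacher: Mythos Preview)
Your outline has the right skeleton---the exact sequence, the anchor $\tau(f)=H_f$, and the identification of the hard step---but it contains a genuine gap precisely at the point you try to wave through. To make $-^{\pop}$ descend to $I/I^2$ you need $(I^2)^{\pop}\subseteq I^2$. Jacobson's additive formula only reduces this to showing $(ab)^{\pop}\in I^2$ for $a,b\in I$, and here the \emph{multiplicative} restricted identity
\[
(ab)^{\pop}=a^p b^{\pop}+a^{\pop}b^p-h^{p-1}a^{\pop}b^{\pop}+P(a,b)
\]
forces you to prove $P(a,b)\in I^2$. You never address this, and it is not formal: $P$ is a universal polynomial of degree $p+1$ in the free quantized algebra, not a Lie polynomial, so there is no obvious reason its value on elements of $I$ lands in $I^2$. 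The paper settles this by working in the free quantized algebra $Q(x,y)$ with its triple grading (total, $x$-degree, $y$-degree), observing that $P$ is homogeneous of degree $(p+1,p,p)$, and using the PBW basis to see that every monomial in $P$ other than $h\{x,y\}^p$ contains $hx$, $hy$, or both an $x$ and a $y$ as factors---hence lies in $I^2$ when $x,y\in I$.

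The Leibniz identity $(\phi x)^{\pop}=\phi^p x^{\pop}+(\tau(\phi x))^{p-1}(\phi)\,x$ has the same flavour and the same resolution: one must show $P(f,x)\equiv \mathrm{ad}_{fx}^{p-1}(f)\,x \pmod{I^2}$ for $x\in I$. The paper does this by invoking Hochschild's identity in $T(x,y)/R$ and then running a second grading argument on $P(x,y)-\mathrm{ad}_{yx}^{p-1}(y)x$ to see that the discrepancy lies in $I^2$. Your proposed shortcut through the tubular neighbourhood theorem does not dodge either computation: after fpqc base change you still have $I\cong J\otimes R$ with $R$ a nontrivial central extension of scalars, and $(ab)^{\pop}$ for $a,b\in J\otimes R$ again involves $P(a,b)$ with $a,b$ general elements, not the bare generators $y_i$. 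So the local-model route lands you back at exactly the same universal-polynomial analysis; it is not a genuine alternative. (Your remark on the final axiom is fine: $(hf)^{\pop}=hf^p$ follows immediately from $h^{\pop}=h$ and $P(h,-)=0$.)
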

\begin{proof}
	Recall $J = (h,y_1,\ldots, y_n) \subseteq A_h$.
	There is a short exact sequence of $\GG_J$-modules
	\[ 0 \to A_h/J \to J/J^2 \to J/(h+J^2) \to 0,\]
	where the first arrow is multiplying by $h$.
	By Proposition \ref{proposition: restriction-to-GJ},
	if $Y \subseteq X$ is restricted Lagrangian,
	then the $\GG$-torsor of quantized coordinates
	restricts to a $\GG_J$-torsor over $\tilde{Y'} = Y'\series{h}$
	such that the associated bundle of the $\GG_J$-representation $J$ is $I$.
	Hence we have a short exact sequence of sheaves
	\[ 0 \to \OO_Y \to I/I^2 \to I/(h + I^2) \to 0.\]
	As $\OO_X = \OO_h /h\OO_h$, the sheaf $I/(h+I^2)$ is the conormal bundle 
	of $Y$ in $X$, which is identified under the symplectic form with the tangent bundle of $Y$.
	The induced map $\tau: I/I^2 \to T_Y$ is $\tau: f \mapsto H_f$,
	and makes $I/I^2$ into a Lie algebroid under the Poisson bracket.

	Now we show that the restricted operation $-^\pop$ descends to an operation on $I/I^2$.
	Let $Q(x,y)$ be the free quantized algebra on $x$ and $y$.
	This is the Rees algebra of the tensor algebra $T(x,y)$ with respect to the PBW filtration from the free Lie algebra on $x$ and $y$. 
	It is homogeneous with respect to three different gradings: the grading where $|h| =1$ and $|\ell| = 1$ for any $\ell$ in the free Lie algebra on $x$ and $y$,
	grading with respect to $x$, and grading with respect to $y$.
	 
	By \cite[§1.2]{bk}, there is $P(x,y) \in Q(x,y)$ defined by
	\[h^{p-1}P(x,y) = (xy)^p - x^py^p. \]
	Its main property is that for $x$ and $y$ in any Frobenius-constant quantization,
	\[(xy)^\pop = x^py^\pop + x^\pop y^p - h^{p-1}x^\pop y^\pop + P(x,y).\]
	With respect to our three gradings, $P$ has degree $(p+1,p,p)$.
	
	By the PBW theorem, $Q(x,y)$ has a homogeneous $k$-basis where every element is of the form
	$h^k \ell_1\cdots \ell_m$
	where $\ell_j$ are Lie monomials in $x$ and $y$.
	Hence we may express $P$ as a sum of such monomials where each monomial is of degree $(p+1,p,p)$.
	But every such monomial is either $h\{x,y\}^p$ or contains $hx$, $hy$, or both $x$ and $y$ as factors.
	Since $I$ is two-sided and $\{I,I\} + I^\pop \subseteq I$, we conclude that if $a,b \in I$, then
	\[ (ab)^\pop = a^p b^\pop + a^\pop b^p - h^{p-1}a^\pop b^\pop + P(a,b) \in I^2.\]
	Hence $I^2$ is stable under the restricted operation. 
	Then for $a \in I$ and $b \in I^2$,
	\[ (a+b)^\pop = a^\pop + b^\pop + L(a,b)\]
	where $L$ is Jacobson's Lie polynomial, so since $I^2 \subseteq I$ is a Lie ideal,
	we conclude that $-^\pop$ descends to $I/I^2$,
	making $I/I^2$ into a restricted Lie algebra.
	Since $H_f^\pop = H_{f^\pop}$ for all $f \in \OO_X$,
	the anchor map $\tau: I/I^2 \to T_Y$ respects the restricted structure.

	Now we check the second axiom of restricted Lie algebroids.
	For $f$ a section of $\OO_h$ and $x$ a section of $I$,
	\[ (fx)^\pop = f^p x^\pop + f^\pop x^p - h^{p-1} f^\pop x^\pop + P(x,y).\]
	For $x \in I$, $x^p$ and $h^{p-1}\OO_h$ are both contained in $I^2$. 

	Now we must use Hochschild's identity on products of the form $(yx)^p$.
	Let $T$ be the tensor algebra on $x$ and $y$,
	and $R = ([ad_x^i y, ad_x^j y]\mid i,j \geq 0)$.
	Then the subalgebra of $T/R$ generated by $ad_x^i y$ for $i \geq 0$ is commutative,
	and so Hochschild's identity \cite[Lemma 1]{hochschild55} shows that 
	\[ (yx)^p - y^px^p = ad_{yx}^{p-1}(y)x \in T/R.\]
	Upon taking the Rees algebra, and taking $R' = (\{ad_x^i y, ad_x^j y \} \mid i,j \geq 0)$,
	we obtain 
	\begin{equation} \label{P-minus-hochschild}
		P(x,y) - ad_{yx}^{p-1}(y) x \in R',
	\end{equation}
	where now $ad$ is the adjoint action of the Poisson bracket.
	But \eqref{P-minus-hochschild} is homogeneous of degree $(p+1,p,p)$.
	As $R'$ is a monomial ideal, there must be an expression for $P(x,y) - ad_{yx}^{p-1}(y)x$
\	where each monomial is a multiple of $\{ad_x^i y, ad_x^j y\}$ for some $i, j \geq 0$.
	But for such a monomial to have total degree $p+1$ and $y$-degree $p$,
	it must have two terms with $y$-degree zero, and thus contains $h^2$, $hx$, or two copies of $x$.

	Thus, if $x \in I$, then $P(f,x) - ad_{fx}^{p-1}(f)x \in I^2$.
	Hence for $x \in I/I^2$ and $f \in \OO_Y$,
	\[(fx)^\pop = f^px^\pop + ad_{fx}^{p-1}(f)x\]
	in $I/I^2$, as desired.

	For $f \in \OO_h$, $(hf)^\pop = hf^p$, so $I/I^2$ obeys the final axiom of restricted Atiyah algebras.
\end{proof}

We now aim to compute the Atiyah class of $I/I^2$.
The algebroid $I/I^2$ depends only on the first-order restricted quantization $\OO_h/h^2\OO_h$, and so we expect that it can be expressed in terms of the first-order data of $\OO_h$. We recall Bezrukavnikov and Kaledin's classification of first-order restricted quantizations.

\begin{proposition}\cite[Proposition 1.19]{bk} \label{prop: first-order-torsor}
	First-order restricted quantizations of $X$ are a torsor over $H^1(X', \Omega^1_{X,log})$.
	There is a canonical basepoint for this torsor, the first-order quantization $\OO_h$ such that $\OO_h \cong \OO_{-h}^{op}$,
	inducing a bijection 
	\begin{equation} \label{eq: first-order-quantization-map}
		\rho: \left\{ 	\text{first-order restricted quantizations over }\OO_X\right\} 
		\to \Het^1(X', \Omega^1_{X,log}).
	\end{equation}
\end{proposition}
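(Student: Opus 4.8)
The plan is to classify first-order restricted quantizations by a \v{C}ech computation with the two-term complex $\aclasscomplex{X}$, exactly parallel to the proof of Theorem \ref{theorem: classify-atiyah-algebras}, and then to single out the basepoint by an averaging argument using $p>2$.

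First I would fix a local model: over a cover of $X'$ on which $(X,\omega)$ is isomorphic to $(\Spec A_0,\eta)\times X'$ as in \cite[Theorem 3.4]{bk}, the first-order truncation of $A_h\tensor_{k\series{h}}\OO_{X'\series{h}}$, with its restricted structure, is a first-order restricted quantization, and a short argument shows every first-order restricted quantization is locally isomorphic to it. Existence of a global one follows either from the construction of \cite{bk} or from vanishing of the $H^2$-obstruction to gluing these local models.

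The technical heart is the computation of the sheaf $\underline{\mathrm{Aut}}$ of automorphisms of the local model, as a restricted quantized algebra, that are the identity modulo $h$. Since $p>2$ we have $h^{p-1}=0$ in $k\series{h}/h^2$, so conjugation by a unit $1+ha$ is trivial at first order, and such an automorphism must have the form $f\mapsto f+h\partial(f)$ with $\partial$ a derivation of $\OO_X$; compatibility with the product forces $\partial$ to be symplectic, i.e. $\alpha:=i_\partial\omega\in\Omega^1_{X,cl}$. The requirement that the restricted operation $-^\pop$ also be preserved imposes a further condition on $\alpha$, and I expect this to be the main obstacle: the computation mixes Jacobson's Lie polynomial, the universal polynomial $P(x,y)$, and the definition of $-^\pop$, much as in the proof of Theorem \ref{theorem: classify-atiyah-algebras}. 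Using the compatibility of restricted contraction with the Cartier operator (Lemma \ref{lemma: cartier-and-restricted-derivative}), I expect the condition to come out to $\alpha'-C(\alpha)=0$, so that $\underline{\mathrm{Aut}}$ is the sheaf of $0$-cocycles of $\aclasscomplex{X}$, namely $\Omega^1_{X,log}$.

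Granting this, the classification goes as in Theorem \ref{theorem: classify-atiyah-algebras}: trivializing two first-order restricted quantizations over a common refinement and comparing the gluing data produces a \v{C}ech $1$-cocycle valued in $\aclasscomplex{X}$, and conversely every such cocycle glues the local model to a first-order restricted quantization; so the set of isomorphism classes is a torsor over $H^1(X',\aclasscomplex{X})=\Het^1(X',\Omega^1_{X,log})$, using the Cartier exact sequence of \eqref{chern-cohomology}. Finally, $\OO_h\mapsto\OO_{-h}^{op}$ is an involution of this set; since flipping the sign of $h$ negates the automorphism $f\mapsto f+h\partial(f)$, it acts on $\underline{\mathrm{Aut}}$, hence on $H^1$, by $-1$, so relative to any basepoint it is $x\mapsto c-x$ for a fixed class $c$. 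As $\Omega^1_{X,log}$ is killed by $p$ and $2$ is invertible (again using $p>2$), the equation $x=c-x$ has the unique solution $x=c/2$; the associated quantization is self-opposite, and taking it as basepoint gives the bijection $\rho$.
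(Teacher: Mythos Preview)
The paper does not supply its own proof of this proposition: it is quoted from \cite[Proposition 1.19]{bk}, with a pointer to \cite[\S2.6]{bv20} for another description of $\rho$. The only related content in the paper is in the proof of Lemma \ref{class-of-I/I^2}, where it is used (not proved) that a logarithmic form $\alpha$ acts on $\OO_h/h^2$ by the automorphism $f\mapsto f+h\alpha(H_f)$. So there is no in-paper argument to compare your sketch against.

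That said, your outline is sound and tracks the argument in \cite{bk}. One small muddle: once you have shown that the automorphism sheaf of the local model (as a restricted quantized algebra, identity mod $h$) is exactly the \'etale sheaf $\Omega^1_{X,log}$, the gluing cocycle for a first-order restricted quantization already lives in $\Omega^1_{X,log}$, not in the full two-term complex $\aclasscomplex{X}$; invoking $\aclasscomplex{X}$ is only a device for computing this \'etale $H^1$ Zariski-locally. This is unlike the situation of Theorem \ref{theorem: classify-atiyah-algebras}, where local flat splittings are non-unique and carry genuine $p$-curvature data $\gamma_i$, so the two-term complex is forced. Your involution argument for the self-opposite basepoint is correct and is precisely where $p>2$ enters.
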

See \cite[§2.6]{bv20} for another description of this classifying map $\rho$.
In characteristic zero, this canonical basepoint is known as the \emph{canonical quantization} \cite[Definition 1.9]{bk04}.

The following lemma identifies the class of the dual restricted Atiyah algebra, which will be used to calculate $[I/I^2]$.

\begin{lemma} \label{lemma: dual-atiyah-class}
	Given a restricted Atiyah algebra $\Aa$ on $Z$,
	the opposite Atiyah algebra $\Aa^{op}$
	is restricted with the same $p$-operation.
	Further,
	\[ [\Aa] + [\Aa^{op}] = c_r(K_Z),\]
	where $K_Z$ is the canonical bundle.
\end{lemma}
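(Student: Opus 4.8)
The plan is to realise $\Aa^{op}$ explicitly and then compute its class via the Čech recipe of Theorem~\ref{theorem: classify-atiyah-algebras}. Identifying $\OO_Z$ with its image in $\Aa$, take $\Aa^{op}$ to have the same underlying sheaf of $k$-modules as $\Aa$, bracket $[-,-]^{op} = -[-,-]$, anchor $-\tau$, twisted $\OO_Z$-module structure $f\cdot_{op} x = fx + \tau(x)(f)$, and $p$-operation $x\mapsto x^\pop$ unchanged; this is the order-$\leq 1$ part of the opposite of the universal enveloping algebra $\mathcal U(\Aa)$. Since $p$ is odd, $\Aa^{op}$ is again a restricted Atiyah algebra with this $p$-operation: the $k$-module structure, addition, and the condition $f^\pop=f^p$ are untouched; $\mathrm{ad}^{op} = -\mathrm{ad}$ gives $(\mathrm{ad}^{op}_x)^p = -\mathrm{ad}_{x^\pop} = \mathrm{ad}^{op}_{x^\pop}$; Jacobson's correction terms are unchanged as they are built from $\mathrm{ad}^{p-1}$ with $p-1$ even; and the restricted Lie-algebroid identity for $\Aa^{op}$ follows from that of $\Aa$ by expanding $(f\cdot_{op} x)^\pop$ with Jacobson's formula and using that $\mathrm{ad}_g$ for $g\in\OO_Z$ squares to zero on $\Aa$. (Equivalently, $x^{\cdot_{op}p} = x^p$ in $\mathcal U(\Aa)$, which by Hochschild's identity lies in $\Aa$ and equals $x^\pop$.) I expect this first part to be routine, with the only care needed in sign-tracking.

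For the class identity I would fix a Zariski cover $\{U_i\}$ on which $\Aa$ admits Lie-algebra splittings $\sigma_i$ (Lemma~\ref{lemma: locally-split}) and $K_Z$ admits trivialisations by volume forms $\omega_i$; write $\omega_i = u_{ij}\omega_j$, let $D_i = \mathrm{div}_{\omega_i}$ (so $L_\xi\omega_i = D_i(\xi)\omega_i$ and $D_i - D_j = d\log u_{ij}$), and let $\{\alpha_{ij}=\sigma_i-\sigma_j,\ \gamma_i\}$ be the cocycle of $\Aa$ from Theorem~\ref{theorem: classify-atiyah-algebras}. Cartan's identity $D_i([\xi,\eta]) = \xi(D_i\eta)-\eta(D_i\xi)$ together with $D_i(f\xi) = fD_i(\xi)+\xi(f)$ shows that $\sigma^{op}_i(\xi) := -\sigma_i(\xi) - D_i(\xi)$ is a $\cdot_{op}$-linear Lie-algebra splitting of $\Aa^{op}$, whence $\alpha^{op}_{ij} = -\alpha_{ij} - d\log u_{ij}$. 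On the other side, the connections $\nabla^i$ on $K_Z|_{U_i}$ with $\nabla^i\omega_i = 0$ are flat Lie-algebra splittings of $Diff^{\leq 1}(K_Z)$ whose cocycle for $c_r(K_Z) = [Diff^{\leq 1}(K_Z)]$ is $\{-d\log u_{ij},\ 0\}$ — the $\gamma$-part is $0$ since a trivial connection has vanishing $p$-curvature. Thus $\alpha_{ij}+\alpha^{op}_{ij}$ is already the $1$-form part of the $c_r(K_Z)$-cocycle.

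The crux is $\gamma_i + \gamma^{op}_i = 0$. Computing $\sigma^{op}_i(\xi)^\pop$ via Jacobson's formula for the sum of $-\sigma_i(\xi)$ and $-D_i(\xi)$ in $\Aa$, only the $s_{p-1}$-term survives (because $\mathrm{ad}$ of $-D_i(\xi)\in\OO_Z$ squares to zero on $\Aa$, while $\mathrm{ad}_{\sigma_i(\xi)}$ kills $\sigma_i(\xi)$), and it equals $-\xi^{p-1}(D_i\xi)$; since also $(-\sigma_i\xi)^\pop = -\sigma_i(\xi)^\pop$ and $(-D_i\xi)^\pop = -(D_i\xi)^p$ as $p$ is odd, one obtains
\[ \gamma^{op}_i(\xi)^p \;=\; \sigma^{op}_i(\xi)^\pop - \sigma^{op}_i(\xi^\pop) \;=\; -\gamma_i(\xi)^p \;-\; \big((D_i\xi)^p + \xi^{p-1}(D_i\xi) - D_i(\xi^\pop)\big). \]
The last parenthesis is precisely the $p$-curvature expression $\alpha(\xi)^p + \xi^{p-1}\alpha(\xi) - \alpha(\xi^\pop)$ (as in the Introduction) for the connection form $\alpha = D_i$ of the Lie derivative $L$ on $K_Z|_{U_i}$ in the trivialisation $\omega_i$, and it vanishes because $(L_\xi)^p = L_{\xi^\pop}$ on $\Omega^n_Z$: both sides are derivations of $\Omega^\bullet_Z$ commuting with $d$ and restricting to $\xi^\pop$ on $\OO_Z$, hence equal. (Locally this is the restricted Cartan-type identity $\mathrm{div}_\omega(\xi^\pop) = (\mathrm{div}_\omega\xi)^p + \xi^{p-1}(\mathrm{div}_\omega\xi)$.) Hence $\gamma^{op}_i(\xi)^p = -\gamma_i(\xi)^p$, so $\gamma^{op}_i = -\gamma_i$ by uniqueness of $p$-th roots in the reduced ring $\OO_Z$. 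Therefore $\{\alpha_{ij}+\alpha^{op}_{ij},\ \gamma_i+\gamma^{op}_i\} = \{-d\log u_{ij},\ 0\}$ represents both $[\Aa] + [\Aa^{op}]$ and $c_r(K_Z)$, giving the stated identity.

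Finally I would note the conceptual alternative that makes the appearance of $K_Z$ transparent: for any local invertible $\Aa$-module $M$ one has $M \otimes_{\OO_Z} \mathcal{H}om_{\OO_Z}(M,K_Z) \cong K_Z$ via $m\otimes\varphi\mapsto\varphi(m)$, canonically in $M$, and $\Aa\boxplus\Aa^{op}$ acts on this, producing a global isomorphism of restricted Atiyah algebras $\Aa\boxplus\Aa^{op}\cong Diff^{\leq 1}(K_Z)$; since the class is additive under Baer sum this recovers the identity. The main obstacle in the cocycle proof is the $\gamma$-part above: isolating the single surviving Jacobson term and recognising the remainder as the vanishing $p$-curvature of the Lie-derivative connection on top forms.
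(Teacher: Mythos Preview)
Your proof is correct and more self-contained than the paper's. For the first claim (that $\Aa^{op}$ is restricted with the same $p$-operation), the paper verifies the restricted Lie-algebroid identity by a direct Hochschild's-identity computation in $\Aa$, showing $(af)^\pop - a^\pop f^p = a\,(f\tau a)^{p-1}(f)$; your Jacobson-formula route is an equivalent bookkeeping. For the class identity $[\Aa]+[\Aa^{op}]=c_r(K_Z)$, the paper simply writes ``the proof is the same as for Atiyah algebras in characteristic zero; see \cite[§2.4.1]{bb},'' which is essentially the Baer-sum/conceptual argument you sketch at the end. Your main argument---the explicit \v{C}ech computation via Theorem~\ref{theorem: classify-atiyah-algebras} using divergence operators $D_i$ and the splittings $\sigma_i^{op}=-\sigma_i-D_i$---is a genuinely different route: it directly verifies that the $\gamma$-parts (the positive-characteristic refinement absent from the characteristic-zero story) cancel, by recognising the defect $(D_i\xi)^p+\xi^{p-1}(D_i\xi)-D_i(\xi^\pop)$ as the $p$-curvature of the Lie-derivative connection on $K_Z$, which vanishes because $L_\xi^p=L_{\xi^\pop}$ on top forms. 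This buys you a proof that stays entirely within the paper's own framework and makes explicit that the restricted structures match; the paper's citation is shorter but leaves to the reader the check that the characteristic-zero Baer-sum argument is compatible with the $p$-operations.
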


\begin{proof}
	The operation $a \mapsto a^\pop$ is a restricted Lie operation for the opposite Lie algebra, and the anchor map 
	$\tau^{op} = -\tau: \Aa^{op} \to T_Z$
	is compatible with the operation: for $a$ a local section of $\Aa$,
	\[\tau^{op}(a^\pop) = - \tau(a)^\pop = (-\tau(a))^\pop = \tau^{op}(a)^\pop.\]
	The compatibility of opposite scalar multiplication and $a \mapsto a^\pop$ follows from Hochschild's identity. Compute in $\Aa$: for $f$ a local section of $\OO_Z$ and $a$ a local section of $\Aa$,
	\begin{align*}
		(af)^\pop - a^\pop f^p &= (fa + \tau a(f))^\pop - a^\pop f^p \\
			&= (fa)^\pop - f^p a^\pop + (\tau a(f))^p + (f\tau a)^{p-1}(\tau a(f)) \\
			&= (f\tau a)^{p-1}(f)a + (\tau a(f))^p + (f\tau a)^{p-1}(\tau a(f)),
	\end{align*}
	whereas Hochschild's identity in the ring of differential operators gives
	\begin{align*}
		a (f\tau a)^{p-1}(f) &= (f\tau a)^{p-1}(f) a + \tau a(f\tau a)^{p-1}(f) \\
			&= (f\tau a)^{p-1}(f) a + (\tau a f)^{p}(1) \\
			&= (f\tau a)^{p-1}(f) a + (f \tau a + \tau a(f))^p (1) \\
			&= (f \tau a)^{p-1}(f) a + (\tau a(f))^p + (f \tau a)^{p-1}(\tau a(f)).
	\end{align*}
	Comparing our two equations shows that $\Aa^{op}$ is a restricted Lie algebroid. 	
	Now the proof is the same as for Atiyah algebras in characteristic zero; see \cite[§2.4.1]{bb}.
\end{proof}

\begin{lemma} \label{class-of-I/I^2}
	Let $Y\subseteq X$ be a restricted Lagrangian subvariety and $I$ be the ideal of $Y$ in $\OO_h$. Then in $H^1(\aclasscomplex{Y}) = \Het^1(\Omega^1_{Y,log})$,
	\[ [I/I^2] = \rho(O_h)|_Y + \frac{1}{2}c_r(K_Y),\]
	where $\rho$ is the classifying map \eqref{eq: first-order-quantization-map}.
\end{lemma}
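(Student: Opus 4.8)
The plan is to reduce the computation of $[I/I^2]$ to the two pieces of data that the proposition asserts govern it: the first-order restricted quantization class $\rho(\OO_h)$ and the canonical Atiyah class $c_r(K_Y)$. The key structural observation is the short exact sequence of restricted Atiyah algebras obtained in the previous proof,
\[ 0 \to \OO_Y \to I/I^2 \xrightarrow{\tau} T_Y \to 0, \qquad \tau(f) = H_f, \]
together with the fact that $I/I^2$ depends only on the first-order quantization $\OO_h/h^2\OO_h$. First I would make the dependence on $\OO_h$ additive: using Proposition \ref{prop: first-order-torsor}, first-order restricted quantizations form a torsor over $\Het^1(X',\Omega^1_{X,log})$, and I would check that the assignment $\OO_h \mapsto [I/I^2] \in \Het^1(\Omega^1_{Y,log})$ is a map of torsors over the restriction map $\Het^1(X',\Omega^1_{X,log}) \to \Het^1(Y',\Omega^1_{Y,log})$; concretely, changing $\OO_h$ by a logarithmic class $\beta$ changes the Poisson-type bracket on $I/I^2$ in a way that adds $\beta|_Y$ to the Čech cocycle $\{\alpha_{ij},\gamma_i\}$ constructed in Theorem \ref{theorem: classify-atiyah-algebras}. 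This affine-linearity means it suffices to evaluate $[I/I^2]$ at the canonical basepoint, where $\OO_h \cong \OO_{-h}^{op}$, and show the answer there is $\tfrac12 c_r(K_Y)$.

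At the canonical basepoint the key symmetry is the isomorphism $\OO_h \cong \OO_{-h}^{op}$. I would argue that this isomorphism identifies $I/I^2$ with its opposite restricted Atiyah algebra $(I/I^2)^{op}$, up to the sign substitution $h \mapsto -h$ which does not affect the $h$-independent algebroid $I/I^2$ and its $p$-operation. The point is that the Lie bracket on $I/I^2$ is the Poisson bracket $\{-,-\} = h^{-1}[-,-]$, and passing to $\OO_{-h}^{op}$ sends $\{-,-\}$ to $-\{-,-\}$ while fixing the restricted power (which is the $h$-coefficient-free operation $f \mapsto f^\pop$, and one checks $f^\pop$ is unchanged under $h\mapsto -h$ at this order). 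Hence $[I/I^2] = [(I/I^2)^{op}]$. Now Lemma \ref{lemma: dual-atiyah-class} gives $[I/I^2] + [(I/I^2)^{op}] = c_r(K_Y)$, so $2[I/I^2] = c_r(K_Y)$, i.e. $[I/I^2] = \tfrac12 c_r(K_Y)$ (recall $p>2$, so dividing by $2$ is legitimate). Combining with the torsor-map statement yields $[I/I^2] = \rho(\OO_h)|_Y + \tfrac12 c_r(K_Y)$.

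I expect the main obstacle to be the first step: verifying cleanly that $\OO_h \mapsto [I/I^2]$ is genuinely a morphism of torsors, i.e. that perturbing the first-order quantization by a logarithmic $1$-form $\beta$ on $X'$ adds exactly $\beta|_{Y'}$ to the restricted Atiyah class of $I/I^2$, with no correction terms. This requires tracking how $\beta$ enters the bracket and the restricted operation on $I/I^2$ through the local trivializations coming from Proposition \ref{proposition: restriction-to-GJ}, and matching the resulting change in the Čech $1$-cocycle $\{\alpha_{ij},\gamma_i\}$ against the description of $\rho$ as a torsor structure. The cotangent-bundle example in the introduction (where $\OO_h = D_{Y,h}$, a perturbation by $\alpha$ shifts the connection to $hd + h\alpha$, and the $p$-curvature computation produces exactly the logarithmic comparison $\alpha' - C(\alpha)$) is the model case to keep in mind and should make the bookkeeping tractable; once that is in hand, the opposite-algebra symmetry argument is essentially formal given Lemma \ref{lemma: dual-atiyah-class}.
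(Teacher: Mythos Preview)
Your proposal is correct and follows essentially the same two-step strategy as the paper: verify at the self-dual basepoint that $[I/I^2] = [(I/I^2)^{op}] = \tfrac{1}{2}c_r(K_Y)$ via Lemma \ref{lemma: dual-atiyah-class}, then check that $\OO_h \mapsto [I/I^2]$ is equivariant for the torsor action of $\Het^1(\Omega^1_{X,log})$ under restriction to $Y$. The paper implements the basepoint step slightly more carefully by writing down the explicit map $-\alpha$ in a commutative diagram rather than invoking $h \mapsto -h$ informally, and handles the torsor step by noting that the twist by a logarithmic cocycle $\{\alpha_i\}$ acts as the local automorphism $f \mapsto f + h\alpha_i(H_f)$ of $\OO_h/h^2$ and hence of $I/I^2$; your identification of this as the main point to verify is exactly right.
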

\begin{proof}
	Suppose first that $\OO_h/h^2$ is a self-dual first-order quantization. 
	Let $\alpha: \OO_h/h^2 \to \OO_{-h}^{op}/h^2$ be the duality isomorphism.
	Since $\alpha \equiv id \mod h$,
	the following diagram induced by $\alpha$ commutes:
	\[	
		\begin{diagram}
			0&\rTo 	& \OO_Y & \rTo^h & (I/I^2)^{op} & \rTo^{\tau^{op} = -\tau} & T_Y & \rTo & 0 \\
		& 	& \dTo^{id}	& 	& \dTo^{-\alpha} & & \dTo^{id} & & \\
		0&\rTo 	& \OO_Y	& \rTo^h & I/I^2	& 	\rTo^{\tau} & T_y & \rTo & 0 
		\end{diagram}.
	\]
	Since $\alpha$ is an isomorphism of Poisson structures, $-\alpha$ takes the Lie bracket on $(I/I^2)^{op}$ to the bracket on $I/I^2$.
	We conclude that $[I/I^2] = [(I/I^2)^{op}]$, which by Lemma \ref{lemma: dual-atiyah-class} shows that $[I/I^2] = \frac{1}{2} c_r(K_Y)$.

	In general, $\Het^1(\Omega^1_{X,log})$ acts on first-order quantizations via $\rho$ as in \eqref{eq: first-order-quantization-map}, and acts on $\Het^1(\Omega^1_{Y,log}) = H^1(\aclasscomplex{Y})$ via restriction along $Y \to X$. These actions are compatible: twisting by a étale \v{C}ech cocycle $\{\alpha_i\}$ of logarithmic forms induces local automorphisms $f \mapsto f + h \alpha_i(H_f)$ of $\OO_h/h^2$, which induces automorphisms by the same formula on $I/I^2$.
	This proves the formula.
\end{proof}

Now we can classify the restricted Chern classes of quantizable line bundles on $Y$. 
Recall that infinitesimal deformations of an $R$-subscheme $W\subseteq Z$ along a square-zero extension $0 \to I \to R' \to R \to 0$ are a torsor over $H^0(W,N_{Z/W} \tensor_R I)$. Now suppose $\psup{Y}$ is a formal deformation of $Y'\subseteq X'$ which is trivial modulo $h^p$.
Then deformations of $\psup{Y}/h^p= Y'[h]/h^p$ to order $h^p$ have a canonical basepoint: the trivial deformation $Y'[h]/h^{p+1}$. Hence, it makes sense to canonically associate a normal field $\theta \in H^0(Y', N_{Y'/X'})$ 
to the deformation $\psup{Y}/h^{p+1}$ of $\psup{Y}/h^p = Y'[h]/h^p$.

\begin{theorem} \label{theorem: chern-class-of-quantization}
	Suppose $Y \subseteq X$ is a restricted Lagrangian subvariety and 
	$\psup{Y}$ is a formal deformation of $Y'\subseteq X'$ which is trivial modulo $h^p$.
	Let $\theta \in H^0(Y',N_{Y'/X'})$ be the normal field
	defining the infinitesimal deformation $\psup{Y}/h^{p+1}$ of the subscheme $\psup{Y}/h^p = Y'[h]/h^p$ of $X'[h]/h^p$.
	
	If $\LL$ is a line bundle on $Y$ admitting a quantization with $p$-support $\psup{Y}$, then
	\begin{equation} \label{chern-class-equation}
		 c_r(\LL) = \rho(\OO_h)|_Y + \frac{1}{2}c_r(K_Y) - [i_\theta \omega'],
	\end{equation}
	where $K_Y$ is the canonical bundle on $Y$ and $[i_\theta \omega']$ is the Atiyah class on $Y$ corresponding to the 1-form $i_\theta \omega'$ on $Y'$.

	Conversely, 
	if such a quantization exists 
	and the restriction $Pic(\psup{Y}) \to Pic(Y')$ is onto,
	then every line bundle $\LL$ on $Y$ satisfying \eqref{chern-class-equation}
	admits a quantization.
\end{theorem}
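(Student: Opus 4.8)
The plan is to prove both implications by transporting the restricted Atiyah algebra $I/I^2$ of Lemma~\ref{class-of-I/I^2} to the restricted Atiyah algebra of differential operators on a quantizable line bundle. For the direct statement, suppose $\LL_h$ is a quantization of $\LL$ with $p$-support $\psup{Y}$, so that $\LL_h$ is a module over $\OO_{\psup{Y}} = Z(\OO_h)/\mathrm{Ann}(\LL_h)$. Since $I\LL_h \subseteq h\LL_h$, the assignment $a \mapsto \bigl(v \mapsto \tfrac1h a v\bigr)$ kills $I^2$ and so descends to a morphism of sheaves $\phi\colon I/I^2 \to Diff^{\leq 1}(\LL)$ covering the anchor $\tau\colon f \mapsto H_f$; the identity $ab-ba=h\{a,b\}$ makes $\phi$ a map of Lie algebroids, and since $\phi$ is the identity on $\OO_Y$ (via $h\tilde f \bmod I^2 \mapsto$ multiplication by $f$) and on $T_Y$, it is an isomorphism of Atiyah algebras. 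I would then compare the two restricted structures under $\phi$: for $a\in I$ with image $\bar a$ in the ideal $I_{Y'}$ of $Y'$, the relation $a^p = s(\bar a) + h^{p-1}a^{\pop}$ gives $\phi(a)^p = \phi(a^{\pop}) + \tfrac1{h^p}s(\bar a)$ as operators on $\LL$, the last term being defined because $\psup{Y}$ is trivial modulo $h^p$. Because $\LL_h$ is an $\OO_{\psup{Y}}$-module and the first nontrivial order of $\psup{Y}$ over the trivial deformation $Y'[h]/h^{p+1}$ is the normal field $\theta$, the central operator $s(\bar a)$ acts by multiplication by $h^p\langle\theta,\bar a\rangle$ modulo $h^{p+1}$, so under the symplectic identification $N_{Y'/X'}\cong\Omega^1_{Y'}$ sending $\theta$ to $i_\theta\omega'$ this reads $\tfrac1{h^p}s(\bar a) = (i_\theta\omega')(\tau(a)')$ on $\LL$ (a function on $Y'$, viewed on $Y$ via the relative Frobenius). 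Hence the restricted structure of $I/I^2$, transported along $\phi$, differs from the tautological restricted structure $x\mapsto x^p$ of $Diff^{\leq 1}(\LL)$ by the Frobenius-semilinear central correction $x \mapsto -(i_\theta\omega')(\mathrm{symbol}(x)')$; matching the \v{C}ech cocycles of Theorem~\ref{theorem: classify-atiyah-algebras}, such a correction changes the class in $H^1(\aclasscomplex{Y})$ by exactly $[i_\theta\omega']$. Thus $[I/I^2] = c_r(\LL) + [i_\theta\omega']$, and combining with Lemma~\ref{class-of-I/I^2} yields equation~\eqref{chern-class-equation}.

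For the converse, assume in addition that $Pic(\psup{Y})\to Pic(Y')$ is onto and that some line bundle $\LL_0$ admits a quantization $\LL_h^0$ with $p$-support $\psup{Y}$. For $\mathcal M \in Pic(\psup{Y})$, the $\OO_{\psup{Y}}$-module $\LL_h^0\otimes_{\OO_{\psup{Y}}}\mathcal M$ is again an $\OO_h$-module, flat and complete over $k\series{h}$ (it is locally isomorphic to $\LL_h^0$), with the same $p$-support, and by the projection formula its reduction modulo $h$ is the pushforward of $\LL_0\otimes_{\OO_Y}F^*(\mathcal M|_{Y'})$, where $F\colon Y\to Y'$ is the relative Frobenius and $\mathcal M|_{Y'}=\mathcal M/h\mathcal M$. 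So the set of line bundles admitting a quantization with $p$-support $\psup{Y}$ is nonempty and stable under tensoring by the image of $Pic(\psup{Y})\to Pic(Y')\xrightarrow{F^*}Pic(Y)$; by the surjectivity hypothesis and the exact sequence~\eqref{chern-cohomology}, that image is $F^*Pic(Y') = \ker\bigl(c_r\colon Pic(Y)\to H^1(\aclasscomplex{Y})\bigr)$. By the direct statement applied to $\LL_0$, the quantizable locus lies inside the single coset $\{\LL : c_r(\LL) = \rho(\OO_h)|_Y + \tfrac12 c_r(K_Y) - [i_\theta\omega']\}$, which is itself a coset of $\ker(c_r)$; being nonempty, contained in this coset, and $\ker(c_r)$-stable, it equals the whole coset. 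Therefore every $\LL$ satisfying~\eqref{chern-class-equation} admits a quantization with $p$-support $\psup{Y}$.

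I expect the main obstacle to be the middle step of the direct statement: identifying the first nontrivial order of the $p$-support $\psup{Y}$ with the normal field $\theta$, then with the $1$-form $i_\theta\omega'$ on $Y'$, and carrying all the Frobenius twists and signs through the isomorphism $\phi$ so that the discrepancy of restricted structures is precisely $[i_\theta\omega']$ (with the stated sign) in $H^1(\aclasscomplex{Y})$. Once that is in place, the converse is a formal torsor argument resting only on the direct statement and the sequence~\eqref{chern-cohomology}.
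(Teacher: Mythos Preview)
Your proposal is correct and follows essentially the same route as the paper: the direct implication via the map $I/I^2 \to Diff^{\leq 1}(\LL)$, $a \mapsto h^{-1}a$, together with the computation $\varphi(a)^{\pop} - \varphi(a^{\pop}) = h^{-p}s(a) = \theta(s(a))$ and Lemma~\ref{class-of-I/I^2}, and the converse via the $Pic(\psup{Y})$-torsor action $\LL_h \mapsto \mathcal{M} \otimes_{\OO_{\psup{Y}}} \LL_h$ combined with the sequence~\eqref{chern-cohomology}. The only cosmetic difference is that the paper packages the discrepancy of restricted structures by defining the twisted algebra $\Aa = Diff^{\leq 1}(\LL) + [i_\theta\omega']$ and exhibiting a restricted morphism into it, whereas you phrase the same computation as a shift of the \v{C}ech cocycle; the sign bookkeeping you flag (via $i_{H_a'} i_\theta\omega' = -\theta(s(a))$) is indeed the only delicate point, and the paper handles it by exactly the identification you describe.
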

\begin{proof}
	Suppose $\LL$ admits a quantization $\LL_h$. 
	Let $I$ be the preimage of the ideal of $Y$ in $\OO_h$.
	The map $\varphi: I/I^2 \to Diff^{\leq 1}(\LL)$ 
	defined by $a \mapsto h^{-1}a: \LL_h/h \to \LL_h/h$
	is a map of nonrestricted Lie algebras: 
	$h^{-1}a$ is a differential operator of order at most one,
	with principal symbol $\tau(a) = H_a$ since $[h^{-1}a, f] = \{a,f\}$
	for $f$ a local section of $\OO_Y$.
	
	Let us compute $\varphi(a)^\pop - \varphi(a^\pop)$.
	We have $\varphi(a)^\pop = (h^{-1}a)^p = h^{-p}a^p$,
	while $\varphi(a^\pop) = h^{-1} a^\pop$. Hence for $a \in I/I^2$,
	\[ \varphi(a)^\pop - \varphi(a^\pop) = h^{-p}a^p - h^{-1}a^\pop = h^{-p}s(a),\]
	where $s$ is the splitting map of the Frobenius-constant quantization, by \eqref{p-operation-definition}.
	By definition of $\theta$, the action of $s(a)$ on $\LL_h/h^{p+1}$
	is exactly $h^p \theta( s(a))$. 
	Thus $\varphi$ is not a map of restricted Lie algebroids.

	Let $\Aa$ be the Atiyah algebra $Diff^{\leq 1}(\LL) + [i_\theta \omega']$.
	It is the same underlying Lie algebroid as $Diff^{\leq 1}(\LL)$, 
	but with $p$-operation $x^{\pop_{\Aa}} =  x^\pop + i_{(\tau x)'} i_\theta \omega'$.
	Let $\varphi^\theta: I/I^2 \to \Aa$ be the map $a \mapsto h^{-1}a$ as above.
	Then
	\[ (\varphi^\theta(a))^\pop = h^{-p}a^p + i_{H_a'}i_\theta \omega' = h^{-p} a^p - i_\theta ds(a) = h^{-p} a^p - \theta(s(a)),\]
	which shows that $\varphi^\theta$ is a map of restricted Atiyah algebras.
	Hence by Lemma \ref{class-of-I/I^2},
	\[ 
		c_r(\LL) = [I/I^2] - [i_\theta \omega'] = \rho(\OO_h)|_Y + \frac{1}{2}c_r(K_Y) - [i_\theta \omega'].
	\]

	Since quantizations of $Y$ with $p$-support $\psup{Y}$ are equivalent to lifts of the $\GG_J$-torsor $\qcoords_J$ to a $Aut(A_h,M_h)$-torsor, quantizations of $Y$ are a torsor over $H^1( \psup{Y}, \mathbb G_m) = Pic(\psup{Y})$. Concretely, the action by $\mathcal F \in Pic(\psup{Y})$ is 
	\[ \mathcal F : \LL_h \mapsto \mathcal F \tensor_{\OO_{\psup{Y}}} \LL_h.\]
	If $Pic(\psup{Y}) \to Pic(Y')$ is onto, then the line bundles $\LL \in Pic(Y)$ admitting quantizations are closed under the action of $Pic(Y')$.
	By Cartier descent (which is encoded in the sequence \eqref{chern-cohomology}), the difference of two line bundles descends to $Y'$
	if and only if the difference carries a flat connection with zero $p$-curvature, that is, the restricted Chern class of the two bundles are equal.
	If a quantization of a line bundle on $Y$ with $p$-support $\psup{Y}$ exists,
	it follows that $\LL \in Pic(Y)$ may be quantized if and only if 
	its restricted Chern class satisfies \eqref{chern-class-equation}.
\end{proof}

\section{Morita equivalence of quantizations}

We give a criterion for Morita equivalence of Frobenius-constant quantizations.
After inverting $h$, a Frobenius-constant quantization becomes an Azumaya algebra, and thus Morita equivalence is determined by a Brauer class.
Surprisingly, a Brauer class also controls Morita equivalence across $h=0$.
This application of the main result is inspired by forthcoming work on categorical quantization in positive characteristic by E. Bogdanova, D. Kubrak, R. Travkin, and V. Vologodsky. 

Let $(X,[\eta])$ be a restricted symplectic variety as above.
The symplectic form $\omega$ satisfies $d[\eta] = \omega$.
Then the symplectic variety $X^- = (X,-\omega)$ is also restricted,
as $-\omega = d[-\eta]$. 
We will give $X^-$ this restricted structure.
\begin{lemma} \label{lemma: diagonal-restricted}
	The diagonal $\Delta_X \subseteq  X^- \times X$ 
	is a restricted Lagrangian subvariety.
\end{lemma}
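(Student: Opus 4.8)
The plan is to verify the two conditions defining a restricted Lagrangian subvariety in $X^-\times X$, namely that $\Delta_X$ is Lagrangian for the symplectic form $(-\omega)\boxplus\omega$ and that its ideal sheaf is closed under the $p$-operation induced by the restricted structure $[(-\eta)\boxplus\eta]$ on $X^-\times X$. Being Lagrangian is classical: the tangent space to $\Delta_X$ at a point is the diagonal in $T_xX\oplus T_xX$, and for $(v,v),(w,w)$ in the diagonal one has $((-\omega)\boxplus\omega)((v,v),(w,w)) = -\omega(v,w)+\omega(v,w)=0$, while $\dim\Delta_X = \dim X = \tfrac12\dim(X^-\times X)$, so $\Delta_X$ is Lagrangian. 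It remains to check the restricted condition, and by the Proposition comparing (1) and (2) above it is equivalent to show that $[(-\eta)\boxplus\eta]$ restricts to zero in $H^1(\Delta_X,\Omega^{\leq 1})$.

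For this I would work locally, choosing a $1$-form $\eta$ on an open $U\subseteq X$ representing $[\eta]$; then $(-\eta)\boxplus\eta = \mathrm{pr}_2^*\eta - \mathrm{pr}_1^*\eta$ represents the restricted structure on $U^-\times U$. Pulling back along the diagonal $\Delta_U\colon U\to U^-\times U$ gives $\Delta_U^*(\mathrm{pr}_2^*\eta-\mathrm{pr}_1^*\eta) = \eta-\eta = 0$ identically, not merely up to an exact form. Hence the restriction of the representing $1$-form is literally zero on each chart, so $[(-\eta)\boxplus\eta]|_{\Delta_X}=0$ in $H^1(\Delta_X,\Omega^{\leq 1})$. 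By the cited Proposition this is exactly the statement that the ideal sheaf of $\Delta_X$ is closed under the restricted operation, so $\Delta_X$ is a restricted Lagrangian subvariety.

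I do not anticipate a serious obstacle here: the whole point is that on each chart the diagonal pullback of the "difference" $1$-form vanishes on the nose, which is far stronger than local exactness. The only mild subtlety is that a global representative of $[\eta]$ need not exist (as the Remark after Theorem \ref{theorem: restricted-structure-from-form} warns), which is why the argument must be phrased locally and then invoke the cohomological criterion; but since the local representatives of $[\eta]$ on $X^-\times X$ can all be taken of the product form $\mathrm{pr}_2^*\eta-\mathrm{pr}_1^*\eta$ on products of charts of $X$, and the diagonal pullback of each such form is zero, the class $[(-\eta)\boxplus\eta]|_{\Delta_X}$ vanishes regardless of gluing. This completes the argument.
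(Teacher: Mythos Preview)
Your proof is correct and follows essentially the same approach as the paper: the paper simply notes that the restricted structure on $X^-\times X$ is $1\otimes[\eta]-[\eta]\otimes 1$, which ``evidently restricts to zero on $\Delta_X$,'' invoking (implicitly) the same Proposition you cite. Your version supplies the routine verification that $\Delta_X$ is Lagrangian and unpacks the restriction argument with local representatives, but the underlying idea is identical.
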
	
\begin{proof}
	The restricted structure on $X^- \times X$ is given by the class 
	\[ 1 \tensor [\eta] - [\eta] \tensor 1 \in H^1(\Omega^{\leq 1}),\]
	which evidently restricts to zero on $\Delta_X$.
\end{proof}

Now suppose $\OO_h^1$ and $\OO_h^2$ are two Frobenius-constant quantizations of $X$. Then $(\OO_h^1)^{op}$ with the same Frobenius splitting map is a quantization of $X^-$, for the restricted structure agrees with $[-\eta]$ by Theorem \ref{theorem: restricted-structure-from-form}.

\begin{lemma}
	For $\OO_h$ a Frobenius-constant quantization of $X$,
	the Bogdanova-Vologodsky class of the quantization $\OO_h^{op}$ of $X^-$ 
	is $[\OO_h^\sharp]^{-1}$.
\end{lemma}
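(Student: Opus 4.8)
The plan is to trace through the construction of the Bogdanova--Vologodsky class for the opposite quantization and identify each ingredient with the dual of the corresponding ingredient for $\OO_h$. The key observation is that passing to the opposite algebra exchanges left and right modules, and hence replaces every splitting bundle by its dual.

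First I would record how the local models transform. The reduced Weyl algebra $A_h$ is, up to the symplectic sign flip, isomorphic to its own opposite: the map $x_i \mapsto x_i$, $y_i \mapsto -y_i$ (or a similar symplectic automorphism) identifies $A_h^{op}$ with $A_h$ as restricted quantized algebras, compatibly with the sign change $\omega \mapsto -\omega$ on the base. Consequently the group $\GG$ of automorphisms, the stabilizer $\GG_J$, and the extension $A_h^\flat$ all carry over identically, and the torsor of quantized coordinates for $\OO_h^{op}$ on $(X^-)' = (X')^-$ is the same torsor $P$ under this identification. What changes is the splitting module: a left $A_h$-module becomes a right $A_h$-module, i.e.\ a left $A_h^{op}$-module, so the local splitting bundle $M_h$ for $\OO_h$ corresponds to its $k\series{h}$-linear dual $M_h^\vee$ as the splitting bundle for $\OO_h^{op}$, and similarly for $M_h^\flat$ and for the lattice $\Lambda \subseteq Hom(M_h^\flat, M_h)(h^{-1})$, which becomes $\Lambda^\vee \subseteq Hom(M_h^{\flat\vee}, M_h^\vee)(h^{-1}) = Hom(M_h,M_h^\flat)(h^{-1})$. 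Since $End(\Lambda^\vee) \cong End(\Lambda)^{op}$ as $G$-equivariant algebras, the associated Azumaya algebra is the opposite of $End(\Lambda)\times_G P$.

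Then I would invoke Proposition \ref{prop: compare-projective-extensions}: the Brauer class of an Azumaya algebra and of its opposite are inverse, because $[Hom(M_h, M_h^\flat)] = [M_h^\flat][M_h]^{-1} = [Hom(M_h^\flat,M_h)]^{-1}$ at the level of the extensions classifying these Brauer classes, and Lemma \ref{lemma: unique-loop-reductions} guarantees the relevant $L^+\mathbb G_m$-reduction is unique so there is no ambiguity. Therefore $[(\OO_h^{op})^\sharp] = [\OO_h^\sharp]^{-1}$ in $\Het^2((X')^-, L^+\mathbb G_m) = \Het^2(X', L^+\mathbb G_m)$.

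The main obstacle I expect is bookkeeping the identifications carefully: one must check that the symplectic sign flip $\omega \mapsto -\omega$ on $X$ is precisely compensated by the algebra isomorphism $A_h^{op} \cong A_h$ (so that the two notions of ``Frobenius twist'' and ``torsor of quantized coordinates'' genuinely agree on the nose, not merely up to an automorphism that could twist the class), and that the restricted structures match --- this is where Lemma \ref{lemma: diagonal-restricted}'s computation with $[-\eta]$ and Theorem \ref{theorem: restricted-structure-from-form} are used. Once the dictionary ``opposite quantization $\leftrightarrow$ dual splitting bundle'' is pinned down compatibly with the group actions, the inversion of the Brauer class is formal from Proposition \ref{prop: compare-projective-extensions}.
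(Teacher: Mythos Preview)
Your proposal is correct and follows essentially the same route as the paper: the paper's one-line proof is ``apply the canonical identification $End(V)^{op} = End(V^*)$ throughout Bogdanova and Vologodsky's construction of $\OO_h^\sharp$, and apply Proposition \ref{prop: compare-projective-extensions},'' which is exactly your ``opposite quantization $\leftrightarrow$ dual splitting bundle'' dictionary followed by the inversion formula from that proposition. You have simply unpacked the bookkeeping (the $A_h^{op}\cong A_h$ identification, the role of Lemma \ref{lemma: unique-loop-reductions}) that the paper leaves implicit.
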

\begin{proof}
	Apply the canonical identification $End(V)^{op} = End(V^*)$ for a vector space $V$ throughout Bogdanova and Vologodsky's construction of $\OO_h^\sharp$, and apply Proposition \ref{prop: compare-projective-extensions}.
\end{proof}

\begin{theorem}
	\label{theorem: morita-equivalence}
	Let $\OO_h^1$ and $\OO_h^2$ be two Frobenius-constant quantizations of a restricted symplectic variety $X$.
	If 
	\[[(\OO_h^1)^\sharp]=  [(\OO_h^2)^\sharp],\] 
	then $\OO_h^1$ and $\OO_h^2$ are Morita equivalent over their centers.
\end{theorem}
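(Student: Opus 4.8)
The plan is to realize a Morita equivalence between $\OO_h^1$ and $\OO_h^2$ as a bimodule obtained by quantizing (a line bundle on) the diagonal $\Delta_X \subseteq X^- \times X$. First I would observe that an $(\OO_h^2, \OO_h^1)$-bimodule is the same data as a left module over $\OO_h^2 \tensor_{k\series{h}} (\OO_h^1)^{op}$, and that the latter algebra is a Frobenius-constant quantization of $X^- \times X$ (using the product Frobenius splitting; the restricted structure is $1 \tensor [\eta] - [\eta]\tensor 1$ by Theorem \ref{theorem: restricted-structure-from-form}). By Lemma \ref{lemma: diagonal-restricted}, $\Delta_X$ is a restricted Lagrangian subvariety of $X^- \times X$, so Theorem \ref{theorem: existence-of-quantization} applies: a quantization of some line bundle on $\Delta_X$ with a chosen $p$-support exists if and only if the class $\obstr{\psup{\Delta_X}}$ vanishes. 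I would take $\psup{\Delta_X} = \Delta_{X'}\series{h}$, the trivial (undeformed) $p$-support, which is legitimate since it is trivial modulo $h^p$.

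The key step is then to identify $\obstr{\Delta_{X'}\series{h}}$ with a difference of Bogdanova-Vologodsky classes. By Theorem \ref{theorem: comparison-to-bv}, for this choice of $p$-support we have $\obstr{\Delta_{X'}\series{h}} = [(\OO_h^2 \tensor (\OO_h^1)^{op})^\sharp]_{\Delta_{X'}}$. Now the Bogdanova-Vologodsky class is multiplicative under tensor product of quantizations (the correction lattice $\Lambda$ for a tensor product may be taken to be the tensor product of the lattices, and one invokes Proposition \ref{prop: compare-projective-extensions}), and by the preceding lemma $[(\OO_h^1)^{op,\sharp}] = [(\OO_h^1)^\sharp]^{-1}$. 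Hence $[(\OO_h^2 \tensor (\OO_h^1)^{op})^\sharp] = [(\OO_h^2)^\sharp]\cdot[(\OO_h^1)^\sharp]^{-1}$, and restricting to the diagonal $\Delta_{X'} \cong X'$ this is $[(\OO_h^2)^\sharp] \cdot [(\OO_h^1)^\sharp]^{-1}$, which is trivial precisely under the hypothesis $[(\OO_h^1)^\sharp] = [(\OO_h^2)^\sharp]$. So Theorem \ref{theorem: existence-of-quantization} produces a line bundle $\LL$ on $\Delta_X \cong X$ and a quantization $\BB = \LL_h$ of $i_*\LL$ over $\OO_h^2 \tensor (\OO_h^1)^{op}$, i.e. an $(\OO_h^2,\OO_h^1)$-bimodule, flat and complete over $k\series{h}$, with $\BB/h\BB \cong i_*\LL$.

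Finally I would check that $\BB$ really induces a Morita equivalence over the centers $Z(\OO_h^i) \cong \OO_{X'}\series{h}$. The point is that after inverting $h$, $\BB(h^{-1})$ is a bimodule over the Azumaya algebras $\OO_h^i(h^{-1})$ which modulo $h$ is supported on (a line bundle on) the diagonal, hence is invertible; one constructs the inverse bimodule as the quantization of $\LL^{-1}$ on $\Delta_X$ over $\OO_h^1 \tensor (\OO_h^2)^{op}$ (again by Theorem \ref{theorem: existence-of-quantization}, with the same obstruction class inverted), and shows $\BB \tensor_{\OO_h^1} \BB^{-1} \cong \OO_h^2$ and $\BB^{-1}\tensor_{\OO_h^2}\BB \cong \OO_h^1$ as bimodules. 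These isomorphisms hold modulo $h$ — there both sides are structure sheaves of graphs of symplectomorphisms composing to the identity — and lift uniquely by flatness and completeness over $k\series{h}$ together with a Nakayama-type argument, so $\BB$ gives the desired Morita equivalence of $\OO_{X'}\series{h}$-algebras. The main obstacle I anticipate is verifying the multiplicativity of the Bogdanova-Vologodsky class under tensor product of quantizations over different spaces — one must check that Bogdanova and Vologodsky's lattice construction is compatible with external tensor product and that the resulting extension of the automorphism group is the product extension, so that Proposition \ref{prop: compare-projective-extensions} genuinely applies — and secondarily the bookkeeping of identifying left modules over $\OO_h^2 \tensor (\OO_h^1)^{op}$ with honest bimodules inducing an equivalence rather than merely a functor.
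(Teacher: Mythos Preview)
Your proposal is correct and follows essentially the same route as the paper: quantize the diagonal $\Delta_X \subseteq X^- \times X$ with trivial $p$-support $\Delta_{X'}\series{h}$, invoke Theorem \ref{theorem: comparison-to-bv} to identify the obstruction with the restriction of the Bogdanova--Vologodsky class of $(\OO_h^1)^{op}\tensor \OO_h^2$, and use the hypothesis to kill it. The paper's proof is considerably terser than yours---it simply asserts both the multiplicativity of $[\,\cdot\,^\sharp]$ under external tensor product and that the resulting bimodule $\LL_h$ induces a Morita equivalence, whereas you correctly flag these as points requiring verification and sketch how to handle them.
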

\begin{proof}
	Consider the diagonal $\Delta_X \subseteq X^- \times X$.
	It is restricted Lagrangian by Lemma \ref{lemma: diagonal-restricted}.
	The algebra $(\OO_h^1)^{op} \tensor_{k\series{h}} \OO_h^2$ is a Frobenius-constant quantization of $X^- \times X$.
	The hypothesis implies 
	\[ [(\OO_h^2)^\sharp]^{-1} \tensor [(\OO_h^1)^\sharp] \]
	restricts to zero on $\Delta_{X'}\series{h} \subseteq (X'\times X')\series{h}$.
	By Theorems \ref{theorem: existence-of-quantization} and \ref{theorem: comparison-to-bv},
	there is a line bundle $\LL$ on $\Delta_X$ 
	admitting a quantization to $\LL_h$ over $(\OO_h^1)^{op}\tensor \OO_h^2$.
	Then $\LL_h$ is the bimodule inducing a Morita equivalence between $\OO_h^1$ and $\OO_h^2$.
\end{proof}

\begin{remark}
	If $f: (X_1,[\eta_1]) \to (X_2,[\eta_2])$ is a symplectomorphism such that $f^*[\eta_2] = [\eta_1]$,
	then the graph $\Gamma_f \subseteq X_1^- \times X_2$ is also restricted.
	By replacing $\Delta$ with $\Gamma_f$, Theorem \ref{theorem: morita-equivalence} generalizes to show that if $\OO_h^1$ and $\OO_h^2$ are Frobenius-constant quantizations of $X_1$ and $X_2$ and $f^*[(\OO^2_h)^\sharp] = [(\OO_h^1)^\sharp]$,
	then $f^*\OO_h^2$ is Morita equivalent to $\OO_h^1$.
\end{remark}


\printbibliography

\noindent
{\sc University of Chicago\\
Chicago, IL, USA}

\bigskip

\noindent
{\em E-mail address\/}: {\tt mundinger@uchicago.edu}\\

\end{document}